\theoremstyle{plain}
\newtheorem{theorem}{Theorem}[section]
\newtheorem{corollary}[theorem]{Corollary}
\newtheorem{lemma}[theorem]{Lemma}
\newtheorem{proposition}[theorem]{Proposition}
\theoremstyle{definition}
\newtheorem{definition}[theorem]{Definition}
\newtheorem{remark}[theorem]{Remark}
\numberwithin{equation}{section}
\newcommand{\R}{\mathbb R}
\newcommand{\Z}{\mathbb Z}
\newcommand{\subRn}{{{\mathbb R}^n}}
\DeclareMathOperator{\dist}{dist}
\DeclareMathOperator*{\essinf}{ess\,inf}
\DeclareMathOperator*{\esssup}{ess\,sup}
\newcommand{\avf}{{\langle f\rangle}}
\newcommand{\avfa}{{\langle f_1\rangle}}
\newcommand{\avfb}{{\langle f_2\rangle}}
\newcommand{\avpafa}{{\langle f_1\rangle_{p_{1}(\cdot),Q}}}
\newcommand{\avpbfb}{{\langle f_2\rangle_{p_{2}(\cdot),Q}}}
\newcommand{\pap}{{p_{1}(\cdot)}}
\newcommand{\pbp}{{p_{2}(\cdot)}}
\newcommand{\cpap}{{p_{1}^{\prime}(\cdot)}}
\newcommand{\cpbp}{{p_{2}^{\prime}(\cdot)}}
\newcommand{\pjp}{{p_{j}(\cdot)}}
\newcommand{\cpjp}{{p_{j}^{\prime}(\cdot)}}
\newcommand{\pp}{{p(\cdot)}}
\newcommand{\cpp}{{p'(\cdot)}}
\newcommand{\Lp}{L^{p(\cdot)}}
\newcommand{\Lpa}{L^{p_{1}(\cdot)}}
\newcommand{\Lpb}{L^{p_{2}(\cdot)}}
\newcommand{\Pp}{\mathcal P}
\newcommand{\rr}{{r(\cdot)}}
\newcommand{\sst}{{s(\cdot)}}
\newcommand{\M}{\mathcal{M}}
\newcommand{\Q}{{Q_{j}^{k}}}
\newcommand{\vecpp}{{\vec{p}(\cdot)}}
\newcommand{\D}{\mathcal{D}}
\newcommand{\A}{\mathcal{A}}
\newcommand{\F}{\mathcal{F}}
\newcommand{\Qq}{{\mathcal{Q}}}
\newcommand{\sw}{{\mathcal{S}}}
\newcommand{\swp}{{\mathcal{S}'}}
\DeclareMathAlphabet\EuRoman{U}{eur}{m}{n}
\SetMathAlphabet\EuRoman{bold}{U}{eur}{b}{n}
\renewcommand{\mathsf}{\EuRoman}
\def\Xint#1{\mathchoice 
{\XXint\displaystyle\textstyle{#1}}% 
{\XXint\textstyle\scriptstyle{#1}}% 
{\XXint\scriptstyle\scriptscriptstyle{#1}}% 
{\XXint\scriptscriptstyle\scriptscriptstyle{#1}}% 
\!\int} 
\def\XXint#1#2#3{{\setbox0=\hbox{$#1{#2#3}{\int}$} 
\vcenter{\hbox{$#2#3$}}\kern-.5\wd0}}
\def\avgint{\Xint-}
\author{D.~Cruz-Uribe, OFS}
\address{Department of Mathematics \\ University of Alabama \\
Tuscaloosa, AL 35487, USA}
\email{dcruzuribe@ua.edu}
\author{O.~M.~Guzm\'an}
\address{Departamento de Matemáticas\\ Universidad Nacional de Colombia \\ AP360354 Bogotá \\ Colombia}
\email{omguzmanf@unal.edu.co}
\title[The bilinear maximal operator]{Weighted norm inequalities for
  the bilinear maximal operator on variable Lebesgue spaces}
\keywords{variable Lebesgue spaces, bilinear maximal operator,
  weights} 
\subjclass[2010]{42B25,42B35}
\thanks{The first author is supported by 
  research funds from the Dean of the College of Arts \& Sciences, the
  University of Alabama.  This project was started while the second
  author spent the academic year 2016-17 visiting the University of
  Alabama.  The authors would like to thank the referees for their
  detailed and thorough reports.}
\date{August 8, 2019}
\begin{document}

%----------------------------------------------------------------------------------------------------------------------------------%
\begin{abstract}
We extend the theory of weighted norm inequalities on variable
Lebesgue spaces to the case of bilinear operators.  We introduce a
bilinear version of the variable $\A_\pp$ condition, and show
that it is necessary and sufficient for the bilinear maximal operator
to satisfy a weighted norm inequality.  Our work generalizes the linear
results of the first author, Fiorenza and Neugebauer~\cite{dcu-f-nPreprint2010} in the variable
Lebesgue spaces and the bilinear results of Lerner {\em et al.}~\cite{MR2483720} in the
classical Lebesgue spaces.  As an application we prove weighted norm
inequalities for bilinear singular integral operators in the variable
Lebesgue spaces.
\end{abstract}
%------------------------------------------------------------------------------------------------------------------------------------%

\maketitle

%------------------------------------------------------------------------------------------------------------------------------%

\section{Introduction}
\label{sec:intro}

In this paper we develop the theory of bilinear weighted norm
inequalities in the variable Lebesgue spaces.  To put our results in
context we will first describe some previous results; for brevity, we
will defer the majority of definitions until below.  The
Hardy-Littlewood maximal operator is defined by
\[ Mf(x) = \sup_Q \avgint_Q |f(y)|\,dy\cdot\chi_Q(x), \]
where the supremum is taken over all cubes in $\R^n$ with sides
parallel to the coordinate axes.  The now classical result of
Muckenhoupt~\cite{muckenhoupt72} is that a necessary and sufficient
condition for $M$ to be bounded on the weighted Lebesgue space
$L^p(w)$, $1<p<\infty$, i.e., that
\[ \int_\subRn (Mf)^pw\,dx \lesssim \int_\subRn |f|^p w\,dx, \]
 is that $w\in A_p$:
\[ \sup_Q \avgint_Q w\,dx \bigg(\avgint_Q w^{1-p'}\,dx\bigg)^{p-1} <
  \infty, \]
where again the  supremum is taken over all cubes in $\R^n$ with sides
parallel to the coordinate axes. 

This result has been generalized in two directions.  First, Lerner, {\em et
  al.}~\cite{MR2483720}, as part of the theory of weighted norm
inequalities for bilinear Calder\'on-Zygmund singular integrals,
introduced the bilinear (more properly, ``bisublinear'') maximal
operator:
\begin{equation*} \label{eqn:bilinear-max}
 \M(f_1,f_2)(x) = 
\sup_Q \avgint_Q |f_1(y)|\,dy \avgint_Q |f_2(y)|\,dy\cdot\chi_Q(x). 
\end{equation*}
It is immediate that $\M(f_1,f_2)(x)\leq Mf_1(x)Mf_2(x)$, and so by H\"older's
inequality,
\begin{equation} \label{eqn:bilinear-max-bound}
 \M : L^{p_1}(w_1) \times L^{p_2}(w_2) \rightarrow L^{p}(w), 
\end{equation}
where $1<p_1,\,p_2<\infty$, $\frac{1}{p}=\frac{1}{p_1}+\frac{1}{p_2}$,
$w_j\in A_{p_j}$, $j=1,2$, and $w=w_1^{\frac{p}{p_1}}w_2^{\frac{p}{p_2}}$. 

However, while this condition is sufficient, it is not necessary.  In
~\cite{MR2483720} they introduced the class $A_{\vec{p}}$ of vector weights
  defined as follows.  With the previous definitions, let
  $\vec{p}=(p_1,p_2,p)$ and let $\vec{w}=(w_1,w_2,w)$.  Then
  $\vec{w}\in A_{\vec{p}}$ if
\[ \sup_Q \bigg(\avgint_Q w\,dx\bigg)^{\frac{1}{p}}
\bigg(\avgint_Q w_1^{1-p_1'}\,dx\bigg)^{\frac{1}{p_1'}}
\bigg(\avgint_Q w_1^{1-p_2'}\,dx\bigg)^{\frac{1}{p_2'}}
< \infty. \]
They
proved that a necessary and sufficient condition for
inequality~\eqref{eqn:bilinear-max-bound} to hold is that $\vec{w}\in A_{\vec{p}}$.
If $w_j \in A_{p_j}$, then $\vec{w}\in A_{\vec{p}}$, but they gave
examples to show that the class $A_{\vec{p}}$ is strictly larger than
the weights gotten from $A_{p_1}\times A_{p_2}$.   

\smallskip

A second generalization of Muckenhoupt's result is to the setting of
the variable Lebesgue spaces.  The first author, Fiorenza and
Neugebauer~\cite{MR1976842} proved that given an exponent
function $\pp : \R^n\rightarrow [1,\infty)$ such that $1<p_-\leq
p_+<\infty$ and $\pp$ is log-H\"older continuous, then the maximal
operator is bounded on $\Lp$. 
Given $\pp$ a log-H\"older continuous function,  in~\cite{dcu-f-nPreprint2010} (see
also~\cite{cruz-diening-hasto2011}) they
proved the corresponding weighted norm inequality:  a necessary and
sufficient condition for the maximal operator to be bounded on
$\Lp(w)$, i.e., that $\|(Mf)w\|_\pp \lesssim \|fw\|_\pp$, is that
$w\in \A_\pp$,
\[ \sup_Q |Q|^{-1}\|w\chi_Q\|_\pp \|w^{-1}\chi_Q\|_\cpp < \infty. \]
When $\pp=p$ is a constant function, then this reduces to the
classical result of Muckenhoupt, since $\Lp(w)=L^p(w^p)$ and $w\in
\A_\pp$ is equivalent to $w^p \in A_p$.  

\smallskip

The purpose of this paper is to extend both of these results and
characterize the class of weights necessary and sufficient for the
bilinear maximal operator to satisfy bilinear weighted norm
inequalities over the variable Lebesgue spaces.  The remainder of this
paper is organized as follows.  In Section~\ref{section:main} we make
the necessary definitions to state our two main results; in
particular, we introduce the class of vector weights $\A_\vecpp$.  Our
first result,
Theorem~\ref{thm:main}, is for the bilinear maximal operator.  Our
second, Theorem~\ref{thm:sio}, shows that the weight condition
$\A_\vecpp$ is sufficient for bilinear Calder\'on-Zygmund singular
integral operators to satisfy weighted norm inequalities over the
variable Lebesgue spaces.  This generalizes the main result
of~\cite{MR2483720}.  

In Section~\ref{section:prelim} we gather some basic results about
weights and the variable Lebesgue spaces that we need in our proof,
and in Section~\ref{section:char} we prove  some properties of $\A_\pp$
and $\A_\vecpp$ weights.  In
Section~\ref{section:Ap-char} we give a characterization of
 vector weights $\A_\vecpp$  in terms of averaging operators.  In
Section~\ref{section:proof-main} we prove Theorem~\ref{thm:main}.  The
proof is broadly similar to the proof in the linear case given
in~\cite{dcu-f-nPreprint2010}, but there are many additional technical
obstacles.
Finally, in Section~\ref{section:proof-sio} we prove
Theorem~\ref{thm:sio}.  The proof relies on an extrapolation theorem
in the scale of weighted variable Lebesgue spaces proved
in~\cite{CruzUribeSFO:2017km}.

\begin{remark} {\em Added in proof.}  One
  of the anonymous referees for this paper asked whether a shorter
  proof of Theorem~\ref{thm:main} could be
  gotten by adapting the ideas of~\cite{cruz-diening-hasto2011} to the
  bilinear case.  We originally tried this approach, but were
  unsuccessful.  This remains an open problem.
\end{remark}

\medskip

Throughout this paper, $n$ will denote the dimension of the underlying
space $\R^n$.  A cube $Q\subset \R^n$ will always have its sides
parallel to the coordinate axes.  Let $\ell(Q)$ denote the side-length
of $Q$.  Given a cube $Q$ and a function $f$,
we will denote averages as follows:
\[ \frac{1}{|Q|}\int_Q f\,dx = \avgint_Q f\,dx = \avf_Q. \]
Similarly, if $\sigma$ is a non-negative measure, we denote weighted
averages by
\[ \frac{1}{\sigma(Q)}\int_Q f\sigma \,dx = \avf_{\sigma,Q}. \]

Constants will be denoted by $C$, $c$, etc.  and their value may
change from line to line, even in the same computation.  If we need to
emphasize the dependence of a constant on some parameter we will
write, for instance, $C(n)$.  Given two positive quantities $A$ and
$B$, we will write $A\lesssim B$ if there is a constant $c$ such that
$A\leq cB$.  If $A\lesssim B$ and $B\lesssim A$, then we write
$A\approx B$.  

\section{Main results}
\label{section:main}

We first recall the definition of variable Lebesgue spaces.  For more
information, see~\cite{cruz-fiorenza-book}.  Let $\Pp$ denote
the collection of measurable functions $\pp : \R^n \rightarrow
[1,\infty]$ and $\Pp_0$ the collection of measurable functions $\pp : \R^n \rightarrow
(0,\infty]$.  Given $\pp \in \Pp_0$ and a set $E\subset \R^n$, define
\[ p_-(E) = \essinf_{x\in E} p(x), \qquad p_+(E) = \esssup_{x\in E}
  p(x).  \]
For simplicity we will write $p_-=p_-(\R^n)$ and $p_+=p_+(\R^n)$.
Given $\pp\in \Pp$ we define the dual exponent $\cpp$ pointwise a.e.~by
\[ \frac{1}{p(x)}+\frac{1}{p'(x)}=1, \]
with the convention that $\frac{1}{\infty}=0$. 

The space $\Lp$ consists of all complex-valued, measurable functions $f$ such that for
some $\lambda>0$,
\[ \rho_\pp(f/\lambda) = \int_{\subRn\setminus \Omega_\infty}
  \bigg|\frac{f(x)}{\lambda}\bigg|^{p(x)}\,dx + \lambda^{-1}
  \|f\|_{L^\infty(\Omega_\infty)}
 < \infty, \]
where $\Omega_\infty = \{ x \in \R^n : p(x)=\infty \}$. 
This becomes a quasi-Banach function space when equipped with the norm 
\[ \|f\|_{\Lp}=\|f\|_\pp = \inf\big\{ \lambda>0 :
  \rho_\pp(f/\lambda)\leq 1 \big\};  \]
when $p_-\geq 1$ it is a Banach space.
When $\pp=p$, $0<p<\infty$, then $\Lp=L^p$ with equality of quasi-norms.

An exponent $\pp \in \Pp_0$ is said to be locally log-H\"older
continuous, denoted by $\pp \in LH_0$, if there exists a constant $C_0$
such that
\[ \bigg| \frac{1}{p(x)}-\frac{1}{p(y)}\bigg| \leq \frac{C_0}{-\log(|x-y|)}, \qquad
  |x-y|<\frac{1}{2}; \]
$\pp$ is said to be log-H\"older continuous at infinity, denoted by
$\pp \in LH_\infty$, if there exist $C_\infty,\,p_\infty>0$ such that
\[ \bigg|\frac{1}{p(x)} -\frac{1}{ p_\infty}\bigg| \leq \frac{C_\infty}{\log(e+|x|)}. \]
If $\pp \in LH = LH_0\cap LH_\infty$, we simply say that it is
log-H\"older continuous.  

\begin{remark}
For our main results we will assume $p_+<\infty$.  In this case
$\Omega_\infty$ has measure zero and the definition of the norm is simpler.
Moreover, in the definition of log-H\"older continuity, we can replace
the left-hand sides by $|p(x)-p(y)|$ and $|p(x)-p_\infty|$,
respectively, requiring only  new constants that depend on $p_+$.
\end{remark}

\medskip

By a weight $w$ we  mean a
non-negative function such that $0<w(x)<\infty$ a.e.  Given a weight
$w$ and $\pp\in \Pp_0$, we say $f\in \Lp(w)$ if $fw \in \Lp$.

\begin{definition}\label{Apcondition}
Given exponents
$\pap, \pbp \in \Pp(\subRn)$, define $\pp$  for a.e. $x$ by
\begin{equation} \label{eqn:p-defn}
 \frac{1}{p(x)}=\frac{1}{p_{1}(x)}+\frac{1}{p_{2}(x)}.
\end{equation}
and let $\vecpp = (\pap,\pbp,\pp)$.  Given weights
$w_1,\,w_2$, let $w=w_1w_2$ and define $\vec{w}=(w_1,w_2,w)$.  
We say that $\vec{w}\in \A_\vecpp$ if 
   \begin{equation*}
 \sup_Q |Q|^{-2}
\|w\chi_{Q}\|_{\pp}\|w_{1}^{-1}\chi_{Q}\|_{\cpap}
\|w_{2}^{-1}\chi_{Q}\|_{\cpbp}<\infty.
   \end{equation*}
\end{definition}  

\begin{remark}
If $\pap$ and $\pbp$ are constant, then this condition reduces to the
$A_{\vec{p}}\;$  condition for the triple
$(w_1^{p_1},w_2^{p_2},(w_1w_2)^p)$.  
\end{remark}

\medskip

We can now state our first main result.

\begin{theorem} \label{thm:main}
Given $\pap,\,\pbp \in \Pp$, suppose $1<(p_j)_-\leq (p_j)_+<\infty$ and
$p_j(\cdot)\in LH$, $j=1,\,2$. 
Define $\pp$ by \eqref{eqn:p-defn}.  Let $w_1,\,w_2$ be weights and
define $w=w_1w_2$.  Then the bilinear maximal operator satisfies
\begin{equation} \label{eqn:main1}
 \M : L^{\pap}(w_1) \times L^{\pbp}(w_2) \rightarrow L^{\pp}(w) 
\end{equation}
if and only if $\vec{w} \in \A_\vecpp$.  
\end{theorem}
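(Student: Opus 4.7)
The plan is to handle the two implications separately and, for the harder sufficiency direction, to adapt the strategy of the linear characterization in~\cite{dcu-f-nPreprint2010} to the bilinear setting.

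For necessity, I would fix a cube $Q$ and exploit the pointwise lower bound
\[
\M(f_1,f_2)(x)\ge \avgint_Q|f_1|\cdot \avgint_Q|f_2|\cdot \chi_Q(x),
\]
which combined with~\eqref{eqn:main1} gives
\[
\Big(\avgint_Q|f_1|\Big)\Big(\avgint_Q|f_2|\Big)\|w\chi_Q\|_\pp \lesssim \|f_1 w_1\|_\pap\|f_2 w_2\|_\pbp
\]
for all $f_1,f_2$ supported on $Q$. Setting $g_j=f_j w_j$ and choosing $g_j$ to be an approximate norming function for $w_j^{-1}\chi_Q$ in $L^{\cpjp}$ (using the duality $\|h\|_{\cpjp}\approx \sup_{\|g\|_\pjp\le 1}\int gh$, valid because $(p_j)_->1$), the average $\int_Q|f_j|$ becomes comparable to $\|g_j\|_\pjp\|w_j^{-1}\chi_Q\|_\cpjp$. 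Dividing by $|Q|^{2}\|g_1\|_\pap\|g_2\|_\pbp$ and supping over $Q$ yields exactly $\vec{w}\in\A_\vecpp$.

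For sufficiency, after the standard $1/3$-trick reduction to a dyadic bilinear maximal operator $\M^d$, I would normalize $\|f_j w_j\|_\pjp\le 1$ and run a Calder\'on–Zygmund-type decomposition at geometric levels $a^k$ on $\{x:\M^d(f_1,f_2)(x)>a^k\}$, producing maximal dyadic cubes $\{Q_j^k\}$ and the disjoint layers $E_j^k=Q_j^k\setminus\bigcup_l Q_l^{k+1}$. On $E_j^k$ the dyadic bilinear maximal function is comparable to $\avgint_{Q_j^k}|f_1|\cdot\avgint_{Q_j^k}|f_2|$, so the modular splits as
\[
\rho_\pp\!\left(\frac{\M^d(f_1,f_2)\,w}{\lambda}\right)
\lesssim \sum_{j,k}\int_{E_j^k}\!\left(\frac{\avgint_{Q_j^k}|f_1|\cdot\avgint_{Q_j^k}|f_2|\cdot w(x)}{\lambda}\right)^{\!p(x)}\!dx.
\]
I would then invoke the averaging-operator characterization of $\A_\vecpp$ from Section~\ref{section:Ap-char} to dominate each summand, and sum in $(j,k)$ using the geometric decay coming from the levels $a^k$ together with the disjointness of the layers $E_j^k$.

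The main obstacle I anticipate is precisely this last modular estimate. In the linear case one peels off a single factor $\avgint_Q|f|$ balanced against $\|w^{-1}\chi_Q\|_\cpp$; here one must simultaneously handle two averages and three exponents $(\pap,\pbp,\pp)$ whose reciprocals add pointwise but not as scalars. To apply the $\A_\vecpp$ inequality numerically I would split each $f_j\chi_{Q_j^k}$ into ``small'' and ``large'' pieces relative to $w_j^{-1}\chi_{Q_j^k}$, apply the generalized H\"older inequality in $L^\pjp$, and then use the $LH_0$ and $LH_\infty$ bounds on $\pap,\pbp$ to replace pointwise exponents by a single constant exponent on each $Q_j^k$ (as in~\cite{MR1976842}). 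Keeping track of three exponents, three weights, and the dyadic generations while preserving the correct $\A_\vecpp$ constant is where the proof genuinely diverges from the linear case and will demand the most technical care.
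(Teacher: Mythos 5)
Your necessity argument is correct and is essentially the paper's: the paper bundles it into Theorem~\ref{thm:vecAp-char}, observing that $A_Q(f_1,f_2)\le\M(f_1,f_2)$ and testing against norming functions $h_j$ for $w_j^{-1}\chi_Q$ in $L^{\cpjp}$. So no issue there.

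The sufficiency direction is where the gap lies. You correctly identify the $1/3$-trick reduction to $\M^d$, the Calder\'on--Zygmund decomposition at levels $a^k$ with disjoint layers $E_j^k$, and the need to convert pointwise exponents to constants via $LH_0$ and $LH_\infty$. But the specific mechanism you propose for the main estimate would not work: you say you would ``invoke the averaging-operator characterization of $\A_\vecpp$ from Section~\ref{section:Ap-char} to dominate each summand.'' That characterization controls $T_\Qq$ only for a family $\Qq$ of \emph{pairwise disjoint} cubes. The Calder\'on--Zygmund cubes $\{Q_j^k\}$ are disjoint within a fixed $k$ but nested across different $k$, and the modular you wrote integrates over $E_j^k$ while averaging over the larger $Q_j^k$, with an exponent $p(x)$ in the way. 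There is no way to feed this directly into $\|T_\Qq(\cdot,\cdot)w\|_\pp\lesssim\|f_1w_1\|_\pap\|f_2w_2\|_\pbp$, and the geometric decay from $a^k$ alone does not compensate.

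What the paper actually does, and what your proposal is missing, is substantially more delicate. First, the problem is re-expressed: set $\sigma_l=w_l^{-p_l'(\cdot)}$, $u=w^{\pp}$, replace $f_l$ by $f_l\sigma_l$ and normalize so the target becomes the \emph{modular} bound $\int \M^d(f_1\sigma_1,f_2\sigma_2)^{p(x)}u\,dx\lesssim 1$. Second, each $f_l$ is split \emph{globally}, not per cube, into $h=f_l\chi_{\{f_l>1\}}$ and $f_l\chi_{\{f_l\le1\}}$, yielding four integrals $I_1,\dots,I_4$; the inequality $\int_Q h\sigma_l\,dy\le 1$ for the large piece (your~\eqref{Bound_function_geq1}-type bound) is what lets one lower the exponent to a cube-dependent constant $q(\Q)$ with $1/q(\Q)=1/(p_1)_-(\Q)+1/(p_2)_-(\Q)$. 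Third, after H\"older, Young's inequality is used to separate the bilinear product into two \emph{linear} sums, which are then controlled by the boundedness of the weighted dyadic maximal operator $M^d_{\sigma_l}$ (Lemma~\ref{lemma:wtd-max-bound}), leaning on $A_\infty$ properties of $u,\sigma_l$ from Corollary~\ref{cor:Ainfty}. Fourth, for the terms where at least one piece is bounded ($I_2,I_3,I_4$), the cubes must be further partitioned by position relative to a large fixed cube $P$ near the origin ($\mathscr{F},\mathscr{G},\mathscr{H}$, with $\mathscr{H}$ subdivided again by the size of $\sigma_l(\Q)$), with $LH_\infty$ applied via Lemma~\ref{lemma:p-infty-px} on the far-away cubes. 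None of this machinery appears in your outline, and the ``small/large relative to $w_j^{-1}\chi_{Q_j^k}$'' split you suggest is not the split the argument needs. In short, the frame is right but the engine of the proof is absent, and the one concrete tool you point to for the hard step does not apply.
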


\begin{remark}
  We do not believe that the assumption $\pp\in LH$ is necessary in
  Theorem~\ref{thm:main}, but some additional hypothesis is.  In the linear, unweighted case, while it is sufficient
  to assume that the exponent $\pp$ is log-H\"older continuous for the
  maximal operator to be bounded on $\Lp$, it is not necessary:
  see~\cite[Section 4.4]{cruz-fiorenza-book} for examples.  Diening and H\"ast\"o~\cite{diening-hastoPreprint2010}
 conjectured that in the weighted
  case, a necessary and sufficient condition for $M$ to be bounded on
  $\Lp(w)$ is that the maximal operator is bounded on $\Lp$ and $w\in
  \A_\pp$. (The latter condition is given in Definition
  \ref{Def:Ap-variable}.)  Unlike in the constant exponent case, when
  $w=1$ these conditions are not the same, since $1\in \A_\pp$ is a
  necessary but not sufficient condition for $M$ to be bounded on
  $\Lp$ \cite[Corollary~4.50, Example~4.51]{cruz-fiorenza-book}.  We conjecture that the analogous result holds in the
  bilinear case:  $\M$ satisfies \eqref{eqn:main1} if and only if $\M$
  satisfies an unweighted bilinear estimate and $\vec{w}\in \A_\vecpp$.  
\end{remark}

\begin{remark}
In the linear case, the maximal operator is bounded on $\Lp$ if
$p_->1$ and $1/\pp\in LH$:  we can allow $p_+=\infty$.
(See~\cite{cruz-fiorenza-book} for details and references.)   In~\cite{dcu-f-nPreprint2010} it
was conjectured that $M$ is bounded on $\Lp(w)$ with the same
hypotheses if $w\in \A_\pp$.  This condition is well defined even if
$p_-=1$ and $p_+=\infty$.  Moreover,  this conjecture is
true if $\pp=\infty$ a.e.  This is equivalent to a classical but often overlooked
result of Muckenhoupt~\cite{muckenhoupt72}, that if $w^{-1}\in A_1$
and $fw\in L^\infty$, then $(Mf)w\in L^\infty$.   Here we conjecture that
 we can remove the hypothesis $p_+<\infty$ from
Theorem~\ref{thm:main}.  However, as in the linear case we believe
that this will
require a very different argument, as the fact that $p_+,\,
(p_j)_+<\infty$ plays an important role in our proof.
\end{remark}

\begin{remark}
  Though we have only proved our result in the bilinear case, an
  $m$-linear version of Theorem~\ref{thm:main}, $m\geq 3$, should be
  true with the obvious changes in the definition of $\A_\vecpp$ and
  the statement of the theorem.  But even in the bilinear case the
  proof is quite technical, and so we decided to avoid making our
  proof even more obscure by trying to prove the general result.  
\end{remark}

\medskip

Our second main result is for bilinear Calder\'on-Zygmund
singular integrals.  These operators have been considered by a number
of authors, and we refer the reader to~\cite{MR2483720} for details
and further references.

Let $K(x,y,z)$ be a complex-valued, locally integrable function on
$\R^{3n}\setminus \triangle,$ where $\triangle=\{(x,x,x):x\in\R^n\}.$
$K$ is a Calder\'on-Zygmund kernel if there exist $A>0$ and $\delta>0$
such that for all $(x,y,z)\in \R^{3n}\setminus \triangle$,
\begin{equation*}\label{size}
\abs{K(x, y, z)} \le \frac{A}{(\abs{x-y}+\abs{x-z}+\abs{y-z})^{2n}}
\end{equation*}
and 
\begin{equation*}
\abs{K(x, y, z) - K(\tilde{x}, y, z)} \le  \frac{A\,\abs{x-\tilde{x}}^{\delta}}{(\abs{x-y}+\abs{x-z}+\abs{y-z})^{2n+\delta}}
\end{equation*}
whenever
$\abs{x - \tilde{x}} \leq \frac{1}{2} \max ( \abs{x-z}, \abs{x-y})$.
We also assume that the two analogous difference estimates with respect to the variables $y$ and
$z$ hold. An operator $T : \sw\times \sw \rightarrow\swp,$ is
a bilinear Calder\'on-Zygmund singular integral if:
\begin{enumerate}

\item there exists a bilinear Calder{\'o}n-Zygmund kernel
$K$ such that
    \begin{equation*}
    T(f_1,f_2)(x) = \int_{\R^{2n}} K(x, y, z) f_1(y) f_2(z) \,dy \, dz
    \end{equation*}
for all $f_1,\,f_2 \in C_c^\infty(\R^n)$ and all $x \notin \text{supp}(f_1) \cap \text{supp}(f_2);$

\item  there exist $1\le p,q<\infty$ and $r$ such that $\frac{1}{r}=\frac{1}{p}+\frac{1}{q}$ and $T$ can be extended to a  bounded operator from $L^{p} \times L^{q}$ into $L^r.$ 
\end{enumerate}

\begin{theorem} \label{thm:sio}
Given $\pap,\,\pbp \in \Pp$, suppose $1<(p_j)_-\leq (p_j)_+<\infty$ and
$p_j(\cdot)\in LH$, $j=1,\,2$. 
Define $\pp$ by \eqref{eqn:p-defn}.  Let $w_1,\,w_2$ be weights,
define $w=w_1w_2$, and assume $\vec{w} \in \A_\vecpp$.  If $T$ is a bilinear Calder\'on-Zygmund singular
integral, then
\begin{equation} \label{eqn:sio1}
 T : L^{\pap}(w_1) \times L^{\pbp}(w_2) \rightarrow L^{\pp}(w).
\end{equation}
\end{theorem}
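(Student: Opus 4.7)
The plan is to deduce Theorem~\ref{thm:sio} from the bilinear weighted norm inequality of Lerner et al.~\cite{MR2483720} via the extrapolation theorem in the scale of weighted variable Lebesgue spaces proved in~\cite{CruzUribeSFO:2017km}. Recall from \cite{MR2483720} that if $T$ is a bilinear Calder\'on-Zygmund operator and $\vec{u}=(u_1,u_2,u)\in A_{\vec{q}}$ for some constant triple $\vec{q}=(q_1,q_2,q)$ with $\frac{1}{q}=\frac{1}{q_1}+\frac{1}{q_2}$ and $u=u_1^{q/q_1}u_2^{q/q_2}$, then
\[
T : L^{q_1}(u_1)\times L^{q_2}(u_2)\to L^{q}(u),
\]
with norm depending only on $n$, the kernel constants of $T$, and $[\vec{u}]_{A_{\vec{q}}}$.

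The next step is to apply this inequality to the pairs $(T(f_1,f_2),f_1,f_2)$ and feed them into the bilinear extrapolation theorem for weighted variable Lebesgue spaces from~\cite{CruzUribeSFO:2017km}. That theorem takes the form: given a family of triples $(F,G_1,G_2)$ satisfying
\[
\|F\,u\|_{L^{q}(\subRn)}\lesssim \|G_1\,u_1\|_{L^{q_1}(\subRn)}\|G_2\,u_2\|_{L^{q_2}(\subRn)}
\]
for \emph{some} fixed triple of constant exponents and \emph{every} $\vec{u}\in A_{\vec{q}}$ (with a bound depending only on $[\vec{u}]_{A_{\vec{q}}}$), the same inequality holds in the variable exponent setting
\[
\|F\,w\|_{\pp}\lesssim \|G_1\,w_1\|_{\pap}\|G_2\,w_2\|_{\pbp}
\]
for every $\vec{w}\in \A_\vecpp$, provided $\pap,\pbp\in LH$ with $1<(p_j)_-\le (p_j)_+<\infty$ and $\pp$ defined by~\eqref{eqn:p-defn}. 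Since exactly these hypotheses are assumed in Theorem~\ref{thm:sio}, applying this extrapolation machinery to the family $\{(T(f_1,f_2),f_1,f_2)\}$ produces precisely \eqref{eqn:sio1}.

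There are two technical points that will require care, and these form the main obstacle. First, one must verify that the class $\A_\vecpp$ as defined in Definition~\ref{Apcondition} (with multiplicative weights $w=w_1w_2$) is exactly the class to which the extrapolation theorem of~\cite{CruzUribeSFO:2017km} applies, matching the normalization in \cite{MR2483720} (where the $A_{\vec{p}}$ class is written in terms of the powers $u_j=w_j^{p_j}$ and $u=w^p$). This dictionary between formulations is routine but must be checked exponent by exponent, using the $\A_\vecpp$ characterization developed in Section~\ref{section:char}. Second, one must verify that the constants in the bilinear inequality from \cite{MR2483720} have the correct uniform dependence on $[\vec{u}]_{A_{\vec{q}}}$ so as to be compatible with the hypotheses of the extrapolation theorem; this follows from the quantitative form of the bilinear $A_{\vec{p}}$ theorem. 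Once these bookkeeping issues are settled, the proof requires no further computation: the heavy lifting has already been done by Theorem~\ref{thm:main} (which underlies the extrapolation result) and by the constant exponent theory.
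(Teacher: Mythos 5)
Your approach is genuinely different from the paper's, and it has a gap. The paper does not invoke any bilinear extrapolation theorem. Its proof combines three ingredients: (i) the pointwise sharp maximal estimate $M^\#_\delta(T(f_1,f_2))\lesssim \M(f_1,f_2)$ from~\cite{MR2483720} (Proposition~\ref{prop:sharp-est}); (ii) the Fefferman--Stein norm inequality $\|f\|_{L^p_{w_0}}\lesssim\|M^\#_\delta f\|_{L^p_{w_0}}$ for $w_0\in A_\infty$ (Proposition~\ref{prop:wtd-sharp}); and (iii) the $A_\infty$ extrapolation theorem for \emph{pairs} of functions from~\cite{CruzUribeSFO:2017km} (Proposition~\ref{prop:Ainfty-extrapol}). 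The extrapolation is applied to the single-function inequality in (ii), transferring it to $\|T(f_1,f_2)\|_{L^\pp(w)}\lesssim\|\M(f_1,f_2)\|_{L^\pp(w)}$; only then does Theorem~\ref{thm:main} enter, namely to bound the right-hand side. It does not ``underlie'' the extrapolation theorem --- the hypotheses of Proposition~\ref{prop:Ainfty-extrapol} concern the \emph{linear} maximal operator on $L^{(\pp/s)'}(w^{-s})$ and are verified via~\cite{dcu-f-nPreprint2010} and Proposition~\ref{AP-charachterization}, with no reference to $\M$.

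The gap is that you cite~\cite{CruzUribeSFO:2017km} for a \emph{bilinear} extrapolation theorem taking triples $(F,G_1,G_2)$ with a single constant-exponent $A_{\vec q}$ estimate and producing the full family of variable-exponent $\A_\vecpp$ estimates. That reference supplies no such theorem; Proposition~\ref{prop:Ainfty-extrapol}, the only result used from it here, is a one-weight, pairs-of-functions $A_\infty$ extrapolation. A genuine $A_{\vec q}\to\A_\vecpp$ bilinear extrapolation is a nontrivial result in its own right: the class $A_{\vec p}$ does not factor as $A_{p_1}\times A_{p_2}$, so one cannot simply run the Rubio de Francia iteration in each slot separately, and a multilinear extrapolation of the needed generality (even in constant exponents, let alone variable) was quite new at the time this paper was written. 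Unless you can exhibit or prove such a theorem, the argument does not close. Your observations about matching the $w=w_1w_2$ normalization with the $u_j=w_j^{p_j}$ normalization, and about the quantitative dependence of the constants on $[\vec u]_{A_{\vec q}}$, are sensible and would be manageable --- but they are secondary to the missing extrapolation theorem, which is the real content you would need to supply.
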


\begin{remark}
As for the bilinear maximal operator, we do not believe that the
assumption that $\pap,\,\pbp \in LH$ is necessary for the conclusion
in Theorem~\ref{thm:sio} to hold.
In~\cite{CruzUribe:2016wv}, the authors proved that in the unweighted
case it was sufficient to assume that the (linear) maximal operator is
bounded on $L^\pap$ and $L^\pbp$.  We conjecture that with this
hypothesis, or even the weaker assumption that $\M$ satisfies the
associated unweighted bilinear inequality, and $\vec{w}\in \A_\vecpp$,
then \eqref{eqn:sio1} holds.  
\end{remark}

\begin{remark}
Alongside the variable Lebesgue spaces there is a theory of variable
Hardy spaces:  see~\cite{DCU-dw-P2014}.  
  Very recently, the first author, Moen and Nguyen~\cite{DCU-KM-HN}
  proved unweighted estimates on variable Hardy spaces for bilinear
  Calder\'on-Zygmund singular integrals.  It would be interesting to
  extend these results to weighted variable Hardy spaces using the
  $\A_\vecpp$ weights.  
\end{remark}

\section{Preliminaries}
\label{section:prelim}

In this section we gather some basic results about weights and about
variable Lebesgue spaces that we will need in the
subsequent sections.   

\subsection*{Weights}
First, we recall the definition of the class $A_\infty$:
\[ A_\infty = \bigcup_{p>1} A_p.  \]
We will need the following property of $A_\infty$ weights.  For a
proof, see~\cite{garcia-cuerva-rubiodefrancia85}.   

\begin{lemma} \label{lemma:Ainfty-prop}
Let $w\in A_\infty$.  Then for each $0<\alpha<1$ there exists $0<\beta <1$ such
that if $Q$ is any cube and $E\subset Q$ is such that $\alpha|Q|\leq
|E|$, then $\beta w(Q)\leq  w(E)$.   Similarly, for each $0<\gamma<1$
there exists $0<\delta<1$ such that if $|E|\leq \gamma |Q|$, then
$w(E) \leq \delta w(Q)$.  
\end{lemma}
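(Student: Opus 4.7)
The plan is to exploit the quantitative characterization of $A_\infty$ via $A_p$ for some finite $p$. The two statements in the lemma are in fact equivalent via a complementation argument, so I would first prove the second conclusion (that small $|E|$ forces small $w(E)$) and then deduce the first from it.

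Since $w \in A_\infty = \bigcup_{p>1}A_p$, there is some $1<p<\infty$ with $w \in A_p$. For a measurable $E \subset Q$, writing $\chi_E = w^{1/p}\cdot w^{-1/p}\chi_E$ and applying H\"older's inequality with exponents $p$ and $p'$ yields
\[
|E| \le w(E)^{1/p}\left(\int_E w^{1-p'}\,dx\right)^{1/p'}\le w(E)^{1/p}\left(\int_Q w^{1-p'}\,dx\right)^{1/p'}.
\]
The $A_p$ condition bounds $\left(\int_Q w^{1-p'}\,dx\right)^{p-1}$ by $[w]_{A_p}|Q|^p/w(Q)$, so raising to the $1/p'$ power and rearranging gives the quantitative reverse bound
\[
\frac{w(E)}{w(Q)}\ge [w]_{A_p}^{-1}\left(\frac{|E|}{|Q|}\right)^{p}.
\]

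From this, the second statement is immediate: if $|E|\le\gamma|Q|$, then $|Q\setminus E|\ge (1-\gamma)|Q|$, so applying the inequality above to $Q\setminus E$ yields
\[
w(Q\setminus E)\ge [w]_{A_p}^{-1}(1-\gamma)^p\, w(Q),
\]
hence $w(E)\le \delta\, w(Q)$ with $\delta = 1-[w]_{A_p}^{-1}(1-\gamma)^p \in (0,1)$. For the first statement, assume $\alpha|Q|\le |E|$; then $|Q\setminus E|\le (1-\alpha)|Q|$ and the second statement applied with parameter $\gamma = 1-\alpha$ gives $w(Q\setminus E)\le \delta\, w(Q)$, so $w(E)\ge (1-\delta)w(Q)$ and we may take $\beta=1-\delta$.

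I do not anticipate a real obstacle: this is a standard textbook argument, consistent with the authors' citation to Garc\'ia-Cuerva and Rubio de Francia. The only point requiring any care is producing explicit quantitative constants $\beta,\delta$ depending only on $[w]_{A_p}$, $p$, and the given $\alpha$ or $\gamma$, which the above computation does automatically.
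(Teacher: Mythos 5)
Your proof is correct, and it is the standard argument implicit in the paper's citation to Garc\'ia-Cuerva and Rubio de Francia: pick $p$ with $w\in A_p$, apply H\"older with the $A_p$ condition to get the quantitative lower bound $w(E)/w(Q)\geq [w]_{A_p}^{-1}(|E|/|Q|)^{p}$, and then run the complementation. The computation of the constants is right, and the sign checks ($[w]_{A_p}\geq 1$, so $\beta,\delta\in(0,1)$) are fine.

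One small remark on economy: the first assertion of the lemma is the one that follows \emph{directly} from your base inequality $w(E)/w(Q)\geq [w]_{A_p}^{-1}(|E|/|Q|)^{p}$ — if $\alpha|Q|\leq|E|$ then $w(E)\geq [w]_{A_p}^{-1}\alpha^p\, w(Q)$, with no detour through the complement. Routing it through the second assertion is logically sound (and, as you note, the two are equivalent by complementation), but it hides the fact that the base inequality literally \emph{is} the first assertion with $\beta=[w]_{A_p}^{-1}\alpha^p$. It is the second assertion that genuinely needs the complementation step. This does not affect correctness, only the length of the argument.
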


To state our next result,  we introduce the weighted dyadic maximal
operator.  Given a weight $\sigma$, 
\[ M_\sigma^{\D_0} f(x) = \sup_{Q\in \D_0} 
\frac{1}{\sigma(Q)}\int_Q |f|\sigma\,dy\cdot \chi_Q(x) 
= \sup_{Q\in \D_0} \langle |f|\rangle_{\sigma,Q} \chi_Q(x), \]
where the supremum is taken over all cubes in the collection $\D_0$ of
dyadic cubes:
\[ \D_0 = \{ 2^{-k}([0,1)^n +j) :  k \in \Z, j \in \Z^n \}. \]
The following result is well-known but an explicit proof does not seem
to have appeared in the literature.  The proof is essentially the same
as for the classical dyadic maximal operator:  see
Grafakos~\cite{grafakos08a}.  

\begin{lemma} \label{lemma:wtd-max-bound}
Given a weight $\sigma$, then for $1<p<\infty$,
    \[ 
        \int_{\subRn} \big( M_{\sigma}^{\D_0} f\big)^{p}\sigma \,dx \lesssim \int_{\subRn} |f|^{p}\sigma \,dx    \]
 and the implicit constant depends only
on $p$.
\end{lemma}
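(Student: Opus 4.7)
\medskip

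The plan is to prove this via Marcinkiewicz interpolation, exactly as in the unweighted dyadic case, treating $\sigma\,dx$ as the underlying measure. Specifically, I would establish the weak-type $(1,1)$ inequality
\[
\sigma\big(\{x : M_\sigma^{\D_0} f(x) > \lambda\}\big) \leq \frac{1}{\lambda}\int_{\subRn} |f|\sigma\,dx
\]
and the trivial bound $\|M_\sigma^{\D_0} f\|_{L^\infty(\sigma)} \leq \|f\|_{L^\infty(\sigma)}$; then Marcinkiewicz interpolation yields the $L^p(\sigma)$ estimate with a constant depending only on $p$. Both endpoint constants are $1$, so the implicit constant in the conclusion depends only on $p$, as claimed.

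To prove the weak-type $(1,1)$ bound, first reduce by density (or by truncation) to the case $f \in L^1(\sigma)$ with $f$ bounded and compactly supported; the general case follows by a standard monotone approximation. Fix $\lambda > 0$ and set $E_\lambda = \{M_\sigma^{\D_0} f > \lambda\}$. For each $x \in E_\lambda$ there exists some $Q \in \D_0$ with $x \in Q$ and $\langle |f|\rangle_{\sigma,Q} > \lambda$. Since $\int |f|\sigma\,dx < \infty$ and $\sigma(Q) \to \infty$ as $\ell(Q) \to \infty$ along a chain of dyadic cubes containing $x$ (or else $\sigma$ is supported in a finite-measure set, a case handled by minor bookkeeping), the average $\langle |f|\rangle_{\sigma,Q}$ tends to $0$, so there is a \emph{maximal} such dyadic cube containing $x$. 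Call the resulting collection $\{Q_j\}$. Any two dyadic cubes are either nested or disjoint, so by maximality the $Q_j$ are pairwise disjoint, and $E_\lambda = \bigsqcup_j Q_j$.

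Now just sum: for each $j$, $\sigma(Q_j) < \lambda^{-1}\int_{Q_j} |f|\sigma\,dx$, and by disjointness
\[
\sigma(E_\lambda) = \sum_j \sigma(Q_j) \leq \frac{1}{\lambda}\sum_j \int_{Q_j} |f|\sigma\,dx \leq \frac{1}{\lambda}\int_{\subRn} |f|\sigma\,dx,
\]
which is the desired weak-type estimate. The $L^\infty(\sigma) \to L^\infty(\sigma)$ bound is immediate from the definition of $M_\sigma^{\D_0}$ as a supremum of averages. Applying the Marcinkiewicz interpolation theorem (in its form for measure spaces, with the measure $\sigma\,dx$) between these two endpoints yields the strong-type $(p,p)$ inequality for $1 < p < \infty$.

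The only mildly technical point is the existence of maximal cubes in the stopping-time construction, since $\sigma$ is allowed to be any weight and $\sigma(\R^n)$ may be finite or infinite. This is handled routinely by first proving the inequality for $f$ in a dense subclass where the averages vanish at large scales, then extending by Fatou's lemma or by the boundedness of the maximal operator on monotone increasing sequences; no genuinely new idea is required beyond the classical Calder\'on--Zygmund stopping-time argument.
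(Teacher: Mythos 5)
Your argument is correct and is exactly the approach the paper has in mind: it gives no explicit proof, instead remarking that the argument is ``essentially the same as for the classical dyadic maximal operator'' and citing Grafakos, and that classical argument is precisely the stopping-time weak-type $(1,1)$ estimate (with constant $1$, using disjointness of maximal dyadic cubes) combined with the trivial $L^\infty$ bound and Marcinkiewicz interpolation, all carried out with respect to the measure $\sigma\,dx$. The technical point you flag about the existence of maximal stopping cubes is genuine but routine; the cleanest fix is to restrict the supremum to dyadic cubes of side length at most $2^N$, where maximal cubes trivially exist and the constant is independent of $N$, and then let $N\to\infty$ by monotone convergence.
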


\subsection*{Variable Lebesgue spaces}
Here we gather some basic results about variable Lebesgue spaces.  All
of these are found in the literature (with some minor variations).  In
some cases they were only proved for exponents $\pp \in \Pp$, but
essentially the same proof works for $\pp \in \Pp_0$.

\begin{lemma} \label{lemma:rescale}
\cite[Proposition~2.18]{cruz-fiorenza-book}
 Given $\pp \in \Pp_0$, suppose  $|\Omega_\infty|=0$. If $s>0$,  then
\[ \||f|^s\|_\pp = \|f\|_{s\pp}^s.  \]
\end{lemma}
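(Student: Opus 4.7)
The plan is to reduce the identity to a direct change of variables in the defining infimum of the quasi-norm. Since $|\Omega_\infty|=0$, the $L^\infty$ term in the modular contributes nothing (the essential supremum is taken over a null set), so for any measurable $g$,
\[ \rho_\pp(g) = \int_{\R^n} |g(x)|^{p(x)}\,dx, \qquad \|g\|_\pp = \inf\{\lambda>0 : \rho_\pp(g/\lambda)\leq 1\}. \]

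First, I would write out the left-hand side directly from the definition:
\[ \||f|^s\|_\pp = \inf\Big\{\lambda>0 : \int_{\R^n} \frac{|f(x)|^{sp(x)}}{\lambda^{p(x)}}\,dx \leq 1\Big\}. \]
Next, I would substitute $\mu=\lambda^{1/s}$, equivalently $\lambda=\mu^s$. Since $s>0$ this is a strictly increasing bijection of $(0,\infty)$ onto itself, and $\lambda^{p(x)}=\mu^{sp(x)}$, so the integrand equals $(|f(x)|/\mu)^{sp(x)}$. Thus
\[ \int_{\R^n}\frac{|f(x)|^{sp(x)}}{\lambda^{p(x)}}\,dx = \int_{\R^n}\left(\frac{|f(x)|}{\mu}\right)^{sp(x)}dx = \rho_{s\pp}(f/\mu). \]

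Finally, I would apply the elementary fact that for $s>0$ and any $A\subset(0,\infty)$ one has $\inf\{\mu^s : \mu\in A\} = (\inf A)^s$, taking $A=\{\mu>0 : \rho_{s\pp}(f/\mu)\leq 1\}$. This gives
\[ \||f|^s\|_\pp = \inf\{\mu^s : \mu\in A\} = (\inf A)^s = \|f\|_{s\pp}^s, \]
which is the desired identity. There is no real obstacle: the proof is essentially a one-line substitution. The only minor points to note are that we need $s>0$ for the substitution to be monotone, and that both sides may be $+\infty$ (when $f\notin L^{s\pp}$), in which case the identity is read with the convention $\inf\emptyset=+\infty$.
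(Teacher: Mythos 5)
Your proof is correct. The paper itself gives no proof of this lemma, simply citing Proposition~2.18 of the Cruz-Uribe--Fiorenza book, so there is no in-paper argument to compare against; your direct substitution $\lambda=\mu^s$ in the defining infimum, together with the observation that $|\Omega_\infty|=0$ kills the $L^\infty$ part of the modular for both $\pp$ and $s\pp$ (since $sp(x)=\infty$ iff $p(x)=\infty$), is exactly the standard argument for this rescaling identity and is what the cited reference does. The handling of the edge case $\inf\emptyset=+\infty$ is a nice touch and makes the statement hold unconditionally.
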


\begin{lemma} \label{lemma:fatou}
\cite[Theorem~2.61]{cruz-fiorenza-book}
Given $\pp\in \Pp_0$, if $f\in \Lp$ is such that $\{f_k\}$ converges
to $f$ pointwise a.e., then
\[ 
\|f\|_\pp \leq \liminf_{k\rightarrow \infty} \|f_k\|_\pp \]
\end{lemma}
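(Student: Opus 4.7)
The plan is to reduce the claimed norm inequality to a modular inequality via the infimum definition of $\|\cdot\|_\pp$, and then apply the classical scalar Fatou lemma separately to each of the two summands in $\rho_\pp$.

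Set $L=\liminf_{k\to\infty}\|f_k\|_\pp$. If $L=\infty$ there is nothing to prove, so assume $L<\infty$ and pass to a subsequence $\{f_{k_j}\}$ along which $\|f_{k_j}\|_\pp\to L$. Fix $\epsilon>0$; for all sufficiently large $j$, $\|f_{k_j}\|_\pp<L+\epsilon$, and the standard Musielak--Orlicz fact that $\|g\|_\pp<\lambda$ implies $\rho_\pp(g/\lambda)\leq 1$ (via monotonicity of $\lambda\mapsto\rho_\pp(g/\lambda)$, which upgrades an infimum-definition estimate at some $\lambda'\in(\|g\|_\pp,\lambda)$ to one at $\lambda$) then yields $\rho_\pp(f_{k_j}/(L+\epsilon))\leq 1$.

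Now pointwise a.e.\ convergence $f_{k_j}\to f$ gives $|f_{k_j}(x)/(L+\epsilon)|^{p(x)}\to |f(x)/(L+\epsilon)|^{p(x)}$ a.e.\ on $\subRn\setminus\Omega_\infty$, so the classical Fatou lemma bounds the integral portion of $\rho_\pp(f/(L+\epsilon))$ by $\liminf_j$ of the integral portions of $\rho_\pp(f_{k_j}/(L+\epsilon))$. For the $L^\infty(\Omega_\infty)$ summand, a.e.\ convergence gives $\|f\|_{L^\infty(\Omega_\infty)}\leq \liminf_j \|f_{k_j}\|_{L^\infty(\Omega_\infty)}$. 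Adding these two estimates and using subadditivity of $\liminf$ produces $\rho_\pp(f/(L+\epsilon))\leq 1$, hence $\|f\|_\pp\leq L+\epsilon$; sending $\epsilon\to 0^+$ completes the proof. The only non-trivial ingredient is the Musielak--Orlicz implication used above; everything else is a mechanical application of the classical scalar Fatou lemma to each summand of the modular.
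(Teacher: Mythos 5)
Your proof is correct and is the standard modular-based argument for Fatou's lemma in variable Lebesgue spaces; the paper itself simply cites \cite{cruz-fiorenza-book} without giving a proof, and your argument (reduce to the modular via the infimum definition, then apply classical Fatou to the integral summand and lower semicontinuity of the essential supremum to the $\Omega_\infty$ summand) is essentially the textbook one. One small terminology slip: the inequality you need at the end, $\liminf_j A_j + \liminf_j B_j \leq \liminf_j(A_j+B_j)$, is the \emph{superadditivity} of $\liminf$, not subadditivity (it is $\limsup$ that is subadditive); the mathematics is unaffected.
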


\begin{lemma} \label{lemma:mod-norm}
\cite[Corollary~2.23]{cruz-fiorenza-book}
Given $\pp \in \Pp_0$, suppose $0<p_-\leq p_+<\infty$.  If
$\|f\|_\pp>1$, then
\[ \rho_\pp(f)^{\frac{1}{p_+}}\leq \|f\|_\pp \leq 
\rho_\pp(f)^{\frac{1}{p_-}}. \]
If
$\|f\|_\pp\leq 1$, then
\[ \rho_\pp(f)^{\frac{1}{p_-}}\leq \|f\|_\pp \leq 
\rho_\pp(f)^{\frac{1}{p_+}}. \]
Consequently, $\|f\|_\pp\lesssim 1$ if and only if
$\rho_\pp(f)\lesssim 1$. 
\end{lemma}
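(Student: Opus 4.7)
The plan is to reduce everything to the basic homogeneity estimate
\[ \min(\lambda^{-p_-},\lambda^{-p_+})\,\rho_\pp(f) \;\leq\; \rho_\pp(f/\lambda) \;\leq\; \max(\lambda^{-p_-},\lambda^{-p_+})\,\rho_\pp(f), \]
which holds because $p_+<\infty$ forces $|\Omega_\infty|=0$, so
\[ \rho_\pp(f/\lambda) = \int_{\R^n} |f(x)|^{p(x)}\lambda^{-p(x)}\,dx, \]
and $\lambda^{-p(x)}$ lies between $\lambda^{-p_-}$ and $\lambda^{-p_+}$ with the ordering determined by whether $\lambda>1$ or $\lambda<1$. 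With this observation, everything reduces to playing off the definition of the Luxemburg norm as an infimum against these pointwise bounds.

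First I would handle the case $\|f\|_\pp>1$. Set $\lambda = \|f\|_\pp>1$; by the standard unit-ball property (which uses $p_+<\infty$ to get continuity of $\mu \mapsto \rho_\pp(f/\mu)$ from monotone convergence) one has $\rho_\pp(f/\lambda)\leq 1$. Combined with $\lambda^{-p_+}\rho_\pp(f)\leq \rho_\pp(f/\lambda)$ for $\lambda>1$, this gives $\rho_\pp(f)^{1/p_+}\leq \|f\|_\pp$. For the upper bound, pick any $1<\mu<\|f\|_\pp$; by definition of the infimum $\rho_\pp(f/\mu)>1$, and then $1<\rho_\pp(f/\mu)\leq \mu^{-p_-}\rho_\pp(f)$ yields $\mu<\rho_\pp(f)^{1/p_-}$. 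Letting $\mu\nearrow \|f\|_\pp$ completes this case.

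The case $\|f\|_\pp\leq 1$ is entirely parallel, but with the roles of $p_-$ and $p_+$ swapped because now $\lambda<1$ reverses the ordering of $\lambda^{-p_-}$ and $\lambda^{-p_+}$; again one uses $\rho_\pp(f/\|f\|_\pp)\leq 1$ for the upper bound and tests with $\mu<\|f\|_\pp$ for the lower bound. Finally, the "consequently" statement is a direct consequence: if $\|f\|_\pp\leq C$ then $\rho_\pp(f)\leq \max(C^{p_-},C^{p_+})$ by splitting into $C\leq 1$ and $C>1$, and symmetrically in the other direction. The only subtlety in the whole argument is the unit-ball property $\rho_\pp(f/\|f\|_\pp)\leq 1$, which requires $p_+<\infty$ via dominated/monotone convergence applied to $|f/\mu|^{p(x)}$ as $\mu\searrow \|f\|_\pp$; everything else is bookkeeping.
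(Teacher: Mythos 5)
The paper does not prove this lemma itself; it cites it directly from \cite[Corollary~2.23]{cruz-fiorenza-book}. Your argument is correct and is essentially the standard textbook proof of this fact. The key step is exactly right: since $p_+<\infty$ forces $|\Omega_\infty|=0$, the modular is a pure integral $\rho_\pp(f/\lambda)=\int |f|^{p(x)}\lambda^{-p(x)}\,dx$, which is squeezed between $\lambda^{-p_\pm}\rho_\pp(f)$ with ordering depending on whether $\lambda\gtrless 1$; combined with the unit-ball property $\rho_\pp(f/\|f\|_\pp)\leq 1$ on one side and testing the infimum with $\mu<\|f\|_\pp$ (so $\rho_\pp(f/\mu)>1$) on the other, all four inequalities and the ``consequently'' statement follow.

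One small remark on exposition: in your description of the case $\|f\|_\pp\leq 1$ the labels ``upper bound'' and ``lower bound'' appear to be swapped relative to the way you used them in the first case (it is $\rho_\pp(f/\|f\|_\pp)\leq 1$ that gives $\rho_\pp(f)^{1/p_-}\leq\|f\|_\pp$, and the test with $\mu<\|f\|_\pp$ that gives $\|f\|_\pp\leq\rho_\pp(f)^{1/p_+}$). This is purely a slip in wording; the underlying computation is fine. You might also note that monotone convergence of $\rho_\pp(f/\mu)$ as $\mu\searrow\|f\|_\pp$ does not actually require $p_+<\infty$ — what does require it is the homogeneity bound itself, since otherwise $\lambda^{-p_+}$ is not available and the $L^\infty(\Omega_\infty)$ term scales differently.
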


\begin{lemma} \label{lemma:gen-holder}
\cite[Corollary~2.30]{cruz-fiorenza-book}
Fix $k\geq 2$ and let $p_j(\cdot)\in \Pp$ satisfy for a.e. $x$,
\[ \sum_{j=1}^k \frac{1}{p_j(x)} = 1.  \]
Then, for all $f_{j}\in L^{p_{j}(\cdot)}$, $1\leqslant j\leqslant k$,
\[ \int_\subRn |f_1\cdots f_k|\,dx 
\lesssim \prod_{j=1}^k \|f_j\|_{p_j(\cdot)}.  \]
The implicit constant depends only on the $p_j(\cdot)$.
\end{lemma}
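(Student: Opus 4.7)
My plan is to prove this directly via a pointwise multivariate Young inequality, rather than by induction on $k$.

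By homogeneity of the norms on both sides, I may assume $\|f_j\|_{p_j(\cdot)}=1$ for each $j$, reducing the task to showing $\int_{\R^n}|f_1\cdots f_k|\,dx\lesssim 1$ with constant depending only on $k$.

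I would then partition $\R^n$ according to which exponents take the value $\infty$ at each point: for $S\subset\{1,\ldots,k\}$ set $E_S=\{x:p_j(x)=\infty\iff j\in S\}$. The pointwise constraint $\sum_j 1/p_j(x)=1$ reduces on $E_S$ to $\sum_{j\notin S}1/p_j(x)=1$, a sum over finite exponents. For $j\in S$, the $L^\infty$-component of the modular forces $|f_j(x)|\leq \|f_j\|_{L^\infty(\Omega_\infty^{(j)})}\leq 1$ on $E_S$. For $j\notin S$, I apply the multivariate Young inequality---a one-line consequence of Jensen's inequality applied to the concave function $\log$---stating that whenever $r_1,\ldots,r_m>1$ with $\sum 1/r_i=1$,
\[ \prod_{i=1}^{m}a_i\;\leq\;\sum_{i=1}^{m}\frac{a_i^{r_i}}{r_i}. \]
This yields $\prod_j|f_j(x)|\leq \sum_{j\notin S}|f_j(x)|^{p_j(x)}/p_j(x)$ on $E_S$ in the generic case $|S^c|\geq 2$; the degenerate case $|S^c|=1$ forces the unique finite exponent to equal $1$, and is handled by the trivial inequality $|f_{j_0}|\leq |f_{j_0}|^{p_{j_0}(x)}$.

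Integrating over $E_S$, each $\int_{E_S}|f_j|^{p_j(x)}\,dx$ is bounded by $\rho_{p_j(\cdot)}(f_j)$, which is at most $1$ by Lemma \ref{lemma:mod-norm}. Summing over the $2^k$ subsets $S$ gives $\int|f_1\cdots f_k|\,dx\leq C(k)$. The main obstacle is merely the bookkeeping around the boundary cases $p_j(x)\in\{1,\infty\}$, since multivariate Young strictly requires $r_j>1$; all the analytic content is packed into that one pointwise inequality.
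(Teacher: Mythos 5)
Your proof is correct, and the approach (direct pointwise multivariate Young rather than induction on $k$ from the two-exponent case) is a legitimate shortcut. The paper itself does not prove this lemma --- it cites \cite[Corollary~2.30]{cruz-fiorenza-book}, where the result is obtained by iterating the two-exponent version of H\"older's inequality --- so what you have written is a genuinely self-contained alternative. Two small remarks. First, the invocation of Lemma~\ref{lemma:mod-norm} is not quite right: that lemma requires $p_+<\infty$, whereas the generalized H\"older inequality being proved explicitly allows some of the $p_j(\cdot)$ to take the value $\infty$ on a set of positive measure (this is precisely why you introduced the sets $E_S$). What you actually need is the more basic equivalence $\|f\|_\pp\leq 1\Longleftrightarrow \rho_\pp(f)\leq 1$, valid for arbitrary $\pp\in\Pp_0$, which is \cite[Proposition~2.21]{cruz-fiorenza-book} and is built into the definition of the Luxemburg norm; that also covers the $L^\infty$-piece $\|f_j\|_{L^\infty(\Omega_\infty^{(j)})}\leq 1$ that you use for $j\in S$. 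Second, your handling of the edge cases is complete but can be stated more uniformly: on $E_S$ the constraint $\sum_{j\notin S}1/p_j(x)=1$ together with $p_j(x)<\infty$ for $j\notin S$ already forces $p_j(x)>1$ for all $j\notin S$ whenever $|S^c|\geq 2$, so the degenerate case $|S^c|=1$, $p_{j_0}(x)=1$, is the \emph{only} boundary case, and the inequality $|f_{j_0}|\leq |f_{j_0}|^{p_{j_0}(x)}/p_{j_0}(x)$ is just Young with $m=1$. Summing the modular bounds as you do yields the explicit constant $k$, which is consistent with the claim that the constant depends only on the $p_j(\cdot)$.
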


\begin{lemma} \label{lemma:holder}
\cite[Corollary~2.28]{cruz-fiorenza-book}
Given $\pap,\,\pbp \in \Pp_0$, define $\pp\in \Pp_0$
by~\eqref{eqn:p-defn}.  Then 
\[ \|fg\|_\pp \lesssim \|f\|_\pap \|g\|_\pbp; \]
the implicit constant depends only on $\pap$ and $\pbp$.
\end{lemma}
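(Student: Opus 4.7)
The plan is to reduce this generalized Hölder inequality to a pointwise application of scalar Young's inequality inside the modular $\rho_\pp$. First I would use positive homogeneity of the (quasi-)norm to normalize $\|f\|_\pap=\|g\|_\pbp=1$; the definition of the Luxemburg (quasi-)norm as an infimum then immediately gives $\rho_\pap(f)\leq 1$ and $\rho_\pbp(g)\leq 1$. After normalization, it suffices to produce a constant $C=C(\pap,\pbp)$ with $\rho_\pp(fg/C)\leq 1$, for this is equivalent to $\|fg\|_\pp\leq C$.

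Next, on the set where $p_1(x)$ and $p_2(x)$ are both finite, the identity $1/p(x)=1/p_1(x)+1/p_2(x)$ says exactly that $p_1(x)/p(x)$ and $p_2(x)/p(x)$ are conjugate Hölder exponents at each point. Applying scalar Young's inequality to the factorization $fg/(\mu_1\mu_2)=(f/\mu_1)(g/\mu_2)$ with these exponents, for parameters $\mu_1,\mu_2\geq 1$, yields
\[
\left|\frac{f(x)g(x)}{\mu_1\mu_2}\right|^{p(x)}
\leq \frac{p(x)}{p_1(x)}\,\mu_1^{-p_1(x)}|f(x)|^{p_1(x)}
+\frac{p(x)}{p_2(x)}\,\mu_2^{-p_2(x)}|g(x)|^{p_2(x)}.
\]
Using $p(x)/p_j(x)\leq 1$ and $\mu_j^{-p_j(x)}\leq \mu_j^{-(p_j)_-}$, integration then gives a bound of the form $\mu_1^{-(p_1)_-}\rho_\pap(f)+\mu_2^{-(p_2)_-}\rho_\pbp(g)\leq \mu_1^{-(p_1)_-}+\mu_2^{-(p_2)_-}$. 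Choosing $\mu_j=2^{1/(p_j)_-}$ drives this to $\leq 1$, which produces $\|fg\|_\pp\leq \mu_1\mu_2$, an explicit constant depending only on $(p_1)_-$ and $(p_2)_-$.

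The main obstacle is the region where $p_1(x)$ or $p_2(x)$ equals $\infty$, since there the modular is no longer given by an integral and Young's inequality does not apply in the same form. I would handle this by decomposing $\R^n$ into the four pieces determined by whether each $p_j(x)$ is finite or infinite. Where both are infinite, $p(x)=\infty$ as well and the bound collapses to the elementary $\|fg\|_{L^\infty}\leq \|f\|_{L^\infty}\|g\|_{L^\infty}$. Where exactly one of them is infinite, say $p_1(x)=\infty$, then $p(x)=p_2(x)$; factoring out the $L^\infty$ norm of $f$ on $\{p_1=\infty\}$ reduces the modular on this piece to a multiple of $\rho_\pbp(g)\leq 1$, after which the normalization closes the estimate. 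Summing the contributions of the four regions and adjusting $C$ by a bounded factor yields the claimed inequality, with implicit constant depending only on $\pap$ and $\pbp$ through $(p_j)_-$ and the geometry of the sets $\{p_j=\infty\}$.
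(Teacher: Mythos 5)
Your argument is correct and is, in essence, the standard proof of this generalized H\"older inequality; the paper itself does not reprove the statement but simply cites \cite[Corollary~2.28]{cruz-fiorenza-book}, whose proof also proceeds by normalizing, passing to the modular, and applying scalar Young's inequality pointwise with the conjugate exponents $p_1(x)/p(x)$ and $p_2(x)/p(x)$, with separate treatment of the sets where $p_j(x)=\infty$. Two small points worth flagging: the implication $\|f\|_{\pap}=1\Rightarrow\rho_{\pap}(f)\leq 1$ is not literally ``immediate from the definition'' of the Luxemburg infimum (the infimum need not be attained) but follows from a routine monotone-convergence/left-continuity argument, and the cited reference records it as a separate proposition; and your choice $\mu_j=2^{1/(p_j)_-}$, as well as the bounds $\mu_j^{-p_j(x)}\leq\mu_j^{-(p_j)_-}$ on the regions where one $p_j$ is infinite, all presuppose $(p_j)_->0$, which is the regime in which the constant is finite and in which the paper uses the lemma. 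Finally, the constant you obtain depends only on $(p_1)_-$ and $(p_2)_-$; the geometry of the sets $\{p_j=\infty\}$ plays no role, since those regions contribute only through the uniform $L^\infty$ bounds $\|f\|_{L^\infty(\Omega_{1,\infty})}\leq 1$ and $\|g\|_{L^\infty(\Omega_{2,\infty})}\leq 1$.
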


\begin{lemma} \label{lemma:dual}
	\cite[Theorem~2.34]{cruz-fiorenza-book}
	Given $\pp \in \Pp$, then for every $f\in \Lp$,
	\[ \|f\|_\pp \approx \sup_{\|g\|_\cpp \leq
		1}\int_\subRn |fg|\,dx. \]
	The implicit constants depend only on $\pp$.
\end{lemma}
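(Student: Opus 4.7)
The plan is to establish the equivalence by proving two one-sided inequalities. The upper bound $\sup_{\|g\|_{\cpp}\leq 1}\int_{\subRn}|fg|\,dx \lesssim \|f\|_{\pp}$ is immediate from the generalized H\"older inequality of Lemma~\ref{lemma:gen-holder}, applied with $k=2$ and exponent pair $(\pp,\cpp)$, which satisfies $\tfrac{1}{p(x)}+\tfrac{1}{p'(x)}=1$ a.e. by definition of $\cpp$. The work lies in the reverse direction, where one must produce a concrete near-extremal test function $g$.

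For the reverse direction, by homogeneity I may normalize so that $\|f\|_\pp=1$. Consider first the clean case $|\Omega_\infty|=0$ and $p_+<\infty$. Then Lemma~\ref{lemma:mod-norm} together with continuity of the modular $\lambda\mapsto \rho_\pp(f/\lambda)$ forces $\rho_\pp(f)=1$. I would set
\[ g(x) = \sgn(\overline{f(x)})\,|f(x)|^{p(x)-1}, \]
so that the identity $(p(x)-1)p'(x)=p(x)$ yields
\[ \rho_{\cpp}(g) = \int_{\subRn}|f(x)|^{p(x)}\,dx = \rho_\pp(f) = 1, \]
which gives $\|g\|_{\cpp}\leq 1$ by Lemma~\ref{lemma:mod-norm}, while $\int_{\subRn}|fg|\,dx = \rho_\pp(f) = 1 = \|f\|_\pp$, delivering the reverse inequality with constant $1$ in this case.

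The main obstacle is extending this construction when $|\Omega_\infty|>0$ or $p_+=\infty$, since $\|f\|_\pp$ then has both a modular contribution from $\subRn\setminus\Omega_\infty$ and an $L^\infty$ contribution from $\Omega_\infty$, and a single test function must witness both simultaneously. My plan is to split $f=f\chi_{\Omega_\infty^c}+f\chi_{\Omega_\infty}$. On $\Omega_\infty^c$, I would repeat the construction above (restricted and rescaled so that the relevant modular is at most $\tfrac12$). On $\Omega_\infty$, given $\alpha<\|f\chi_{\Omega_\infty}\|_\infty$, set $E=\{x\in\Omega_\infty:|f(x)|>\alpha\}$, which has positive measure, and take $g_2(x)=\sgn(\overline{f(x)})\chi_E(x)/|E|$; since $p'(x)=1$ on $\Omega_\infty$, the $L^\infty$ part of the norm pairs with $g_2$ through its $L^1$-mass, so $\rho_{\cpp}(g_2\chi_{\Omega_\infty})=1$ and $\int|fg_2|\,dx>\alpha$. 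Adding the two pieces with suitable weights and letting $\alpha\nearrow\|f\chi_{\Omega_\infty}\|_\infty$ yields a test function $g$ with $\|g\|_{\cpp}\leq 1$ and $\int |fg|\,dx\gtrsim \|f\|_\pp$, losing only an absolute constant. The delicate point is balancing the two summands of $g$ so the combined modular remains at most $1$ while both components of $\|f\|_\pp$ are captured; this is where the proof becomes more than a formal duality argument.
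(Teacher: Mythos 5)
Your plan (upper bound by H\"older, lower bound via an explicit near-extremal test function) is the standard argument and is essentially what the cited book does; the paper itself supplies no proof for this lemma, so that is the right comparison.

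There is, however, a genuine gap in the lower bound. In your ``clean case'' you exclude $\Omega_\infty=\{p(\cdot)=\infty\}$, but you never exclude or treat the set $\{p(\cdot)=1\}$, on which $p'(\cdot)=\infty$. On that set the modular $\rho_{\cpp}(g)$ contributes $\|g\|_{L^\infty(\{p(\cdot)=1\})}$ rather than an integral of $|g|^{p'(\cdot)}$, and the identity $(p(x)-1)p'(x)=p(x)$ is meaningless. With your $g=\sgn(\overline{f})|f|^{p(\cdot)-1}$ one has $|g|\equiv1$ on $\{p(\cdot)=1\}\cap\supp f$, so the asserted identity $\rho_{\cpp}(g)=\int_{\subRn}|f|^{p(x)}\,dx$ is false whenever $|\{p(\cdot)=1\}|>0$; what actually holds is $\rho_{\cpp}(g)\le\int_{\{p>1\}}|f|^{p(x)}\,dx+1\le2$. (Your pairing computation $\int|fg|\,dx=\rho_\pp(f)=1$ does survive, since on $\{p(\cdot)=1\}$ the integrand is just $|f|$.) Thus the constant-$1$ conclusion you draw is wrong as stated, and the $\{p(\cdot)=1\}$ region must be acknowledged. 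Your general-case sketch inherits the same omission: you split off $\Omega_\infty$ but not $\{p(\cdot)=1\}$.

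The repair is standard and is the heart of the argument you should make: decompose $\subRn$ into the three regions $\{1<p<\infty\}$, $\{p=1\}$, and $\{p=\infty\}$, with test pieces $g_1=\sgn(\overline{f})|f|^{p(\cdot)-1}$, $g_2=\sgn(\overline{f})$, and your normalized indicator $g_3=\sgn(\overline{f})\chi_E/|E|$ on each region respectively. Each $g_i$ satisfies $\rho_{\cpp}(g_i)\le1$ on its support, and by convexity of the modular, $g=\tfrac13(g_1+g_2+g_3)$ satisfies $\rho_{\cpp}(g)\le1$ while $\int|fg|\,dx$ captures a fixed fraction of $\|f\|_\pp$ (after letting your $\alpha$ increase to $\|f\chi_{\Omega_\infty}\|_\infty$). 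The ``balancing'' you flagged as the delicate step is resolved precisely by this convexity; there is no subtler obstacle. As written, though, both cases of your argument omit $\{p(\cdot)=1\}$, and the clean-case computation of $\rho_{\cpp}(g)$ is incorrect on that set.
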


\begin{remark}
It is immediate the in the weighted space $L^\pp(w)$, the same result
is true if we take the supremum over all $g\in L^\cpp(w^{-1})$ with 
$\|gw^{-1}\|_\cpp \leq 1$.  
\end{remark}

\begin{lemma} \label{lemma:harmonic}
\cite[Corollary~4.5.9]{diening-harjulehto-hasto-ruzicka2010}
Let $\pp \in \Pp$ be such that $\pp \in LH$.  Then for
every cube $Q$,
\[ \|\chi_Q\|_\pp \approx |Q|^{\frac{1}{p_Q}}, \]
where $p_Q$ is the harmonic mean of $\pp$ on $Q$:
\[ \frac{1}{p_Q} = \avgint_Q \frac{dx}{p(x)}. \]
The implicit constants depend only on $\pp$.
\end{lemma}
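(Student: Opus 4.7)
The plan is to estimate the modular $\rho_\pp(\chi_Q/\lambda) = \int_Q \lambda^{-p(x)}\,dx$ at the guessed scale $\lambda_0 = |Q|^{1/p_Q}$ and show it lies between two constants depending only on $\pp$. Since $\rho_\pp(\chi_Q/\lambda_0) \approx 1$, the modular--norm comparison of Lemma \ref{lemma:mod-norm} (applied to $\chi_Q/\lambda_0$) then yields $\|\chi_Q\|_\pp \approx \lambda_0 = |Q|^{1/p_Q}$.

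The crucial technical step is the uniform pointwise comparison
\[ |Q|^{1/p(x)} \approx |Q|^{1/p_Q} \qquad \text{for all } x \in Q, \]
with constants depending only on $\pp$; equivalently, $|\log|Q||\cdot|1/p(x) - 1/p_Q|$ is bounded. I would prove this in two cases. When $|Q| \lesssim 1$, the local log-H\"older condition $LH_0$ suffices: for $x,y \in Q$ one has $|x-y| \lesssim |Q|^{1/n}$, so $|1/p(x) - 1/p(y)| \lesssim 1/\log(1/|x-y|) \lesssim 1/|\log|Q||$, and averaging in $y$ over $Q$ replaces $1/p(y)$ by $1/p_Q$. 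When $|Q|$ is large, I would use $LH_\infty$ to compare both $1/p(x)$ and (after averaging) $1/p_Q$ to the common value $1/p_\infty$. The delicate subpoint is that a large cube may contain points near the origin where $LH_\infty$ provides no control; I would handle this by splitting $Q$ into $Q \cap B(0,R)$ and its complement, with $R$ chosen so that the near-origin piece contributes negligibly to the average defining $p_Q$, while on the complement $\log(e+|y|) \gtrsim \log |Q|^{1/n}$ and $LH_\infty$ applies directly to give the required bound.

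With the pointwise comparison in hand, writing $p(x)/p_Q = 1 + p(x)(1/p_Q - 1/p(x))$ and using $p_+ < \infty$ to bound $|p(x)(1/p_Q - 1/p(x))\log|Q||$ gives $|Q|^{p(x)/p_Q} \approx |Q|$ uniformly on $Q$; hence
\[ \rho_\pp(\chi_Q/\lambda_0) = \int_Q |Q|^{-p(x)/p_Q}\,dx \approx \int_Q |Q|^{-1}\,dx = 1, \]
as desired. The main obstacle will be the large-cube case of the pointwise comparison: the interaction between the harmonic-mean definition of $p_Q$ and the $LH_\infty$ hypothesis requires careful separation of $Q$ near versus far from the origin, and a careful bookkeeping of logarithmic factors to ensure that the near-origin portion does not spoil the comparison between $1/p_Q$ and $1/p_\infty$.
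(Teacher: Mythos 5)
Since the paper does not prove this lemma --- it is cited as Corollary~4.5.9 of Diening--Harjulehto--H\"ast\"o--R\r{u}\v{z}i\v{c}ka --- there is no in-paper argument to compare against. Your sketch reconstructs the standard argument (essentially the one in that book) correctly: reduce to the uniform pointwise comparison $\abs{Q}^{1/p(x)}\approx\abs{Q}^{1/p_Q}$ for $x\in Q$, equivalently $\abs{\log\abs{Q}}\cdot\abs{1/p(x)-1/p_Q}\lesssim 1$; establish this by splitting into small cubes (via $LH_0$, in the spirit of Lemma~\ref{lemma:diening}) and large cubes (via $LH_\infty$, separating $Q$ near and away from the origin --- any splitting radius $R$ with $R^n/\abs{Q}\lesssim 1/\log\abs{Q}$ and $\log R\gtrsim\log\abs{Q}$, e.g.\ $R=\abs{Q}^{1/(2n)}$, works); then compute the modular at $\lambda_0=\abs{Q}^{1/p_Q}$ and close with Lemma~\ref{lemma:mod-norm}. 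All of these steps are sound.

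The one caveat, which you already flag yourself, is that your argument uses $p_+<\infty$ in two places: to write $\rho_\pp(\chi_Q/\lambda)=\int_Q\lambda^{-p(x)}\,dx$ (so that $\abs{\Omega_\infty}=0$), and to bound $p(x)\cdot\abs{1/p_Q-1/p(x)}\cdot\abs{\log\abs{Q}}$ by a constant. The lemma as stated assumes only $\pp\in\Pp$ and $\pp\in LH$, which permits $p_+=\infty$; the result in Diening et al.\ covers that case, but the modular estimate then needs a different organization. For this paper the restriction is harmless: wherever Lemma~\ref{lemma:harmonic} is actually invoked (the proofs of Lemmas~\ref{Bound-of-Tp} and~\ref{Gcondition}, in service of Theorem~\ref{thm:vecAp-char} and ultimately Theorem~\ref{thm:main}), it is applied to exponents of the form $2\pp$ or $(2p)'(\cdot)$ built from $\pap,\pbp$ with $1<(p_j)_-\le(p_j)_+<\infty$, and one checks that $1<(2p)_-\le(2p)_+<\infty$, so both exponents are bounded above and below. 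Every instance actually used therefore falls within the case your proof handles, and you were right to make the restriction explicit rather than elide it.
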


\begin{lemma} \label{lemma:diening}
\cite[Lemma~3.24]{cruz-fiorenza-book}
Given $\pp \in \Pp_0$, suppose $\pp\in LH_0$ and $0<p_-\leq p_+<\infty$.
Then for every cube $Q$,
\[ |Q|^{p_-(Q)-p_+(Q)} \lesssim 1, \]
and the implicit constant depends only on $\pp$ and $n$.  The same
inequality holds if we replace one of $p_+(Q)$ or $p_-(Q)$ by $p(x)$ for
a.e. $x\in Q$.  
\end{lemma}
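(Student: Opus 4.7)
The plan is to reduce the claim to a computation using log-Hölder continuity, and to split into cases according to whether $|Q|$ is large or small.

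First I would dispose of the case $|Q|\geq 1$. Since $p_-(Q)\leq p_+(Q)$, the exponent $p_-(Q)-p_+(Q)$ is non-positive, so $|Q|^{p_-(Q)-p_+(Q)}\leq 1$ automatically.

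Next, suppose $|Q|<1$, so in particular $\ell(Q)<1$; one can further shrink to $\sqrt n\,\ell(Q)<1/2$ (absorbing finitely many cases of moderate size into the implicit constant, using $p_+<\infty$). For $x,y\in Q$ we have $|x-y|\leq \sqrt n\,\ell(Q)<1/2$, so the $LH_0$ hypothesis gives
\[
\Bigl|\tfrac{1}{p(x)}-\tfrac{1}{p(y)}\Bigr|\leq \frac{C_0}{-\log(\sqrt n\,\ell(Q))}.
\]
Taking essential supremum over $x$ and essential infimum over $y$ yields
\[
\frac{1}{p_-(Q)}-\frac{1}{p_+(Q)}\leq \frac{C_0}{-\log(\sqrt n\,\ell(Q))}.
\]
Multiplying by $p_+(Q)p_-(Q)\leq p_+^2$ gives $p_+(Q)-p_-(Q)\leq C_0 p_+^2 /(-\log(\sqrt n\,\ell(Q)))$. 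Then, writing $|Q|^{p_-(Q)-p_+(Q)}=\exp\bigl((p_+(Q)-p_-(Q))(-\log|Q|)\bigr)$ and using $-\log|Q|=-n\log\ell(Q)$, I would estimate
\[
(p_+(Q)-p_-(Q))(-\log|Q|)\leq \frac{nC_0 p_+^2\,(-\log\ell(Q))}{-\log(\sqrt n\,\ell(Q))}\leq C(p,n),
\]
since the last ratio tends to $n$ as $\ell(Q)\to 0^+$ and is uniformly bounded for $\sqrt n\,\ell(Q)<1/2$. Exponentiating concludes the main bound.

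For the final assertion, fix $x\in Q$ and write $p_-(Q)-p(x)=\bigl(p_-(Q)-p_+(Q)\bigr)+\bigl(p_+(Q)-p(x)\bigr)$, with the second summand non-negative. When $|Q|\geq 1$ the exponent $p_-(Q)-p(x)\leq 0$ gives $|Q|^{p_-(Q)-p(x)}\leq 1$; when $|Q|<1$, the non-negative contribution makes $|Q|^{p_+(Q)-p(x)}\leq 1$, so $|Q|^{p_-(Q)-p(x)}\leq |Q|^{p_-(Q)-p_+(Q)}\lesssim 1$ by the part just proved. The symmetric case with $p(x)$ replacing $p_-(Q)$ is handled the same way. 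The only mild subtlety is making the dependence on the constants transparent and justifying the essinf/esssup passage inside the $LH_0$ inequality, but these are standard and the argument above is essentially self-contained.
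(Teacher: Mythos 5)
Your argument is correct and is essentially the standard proof of Diening's condition; the paper itself does not prove this lemma but cites it directly from~\cite[Lemma~3.24]{cruz-fiorenza-book}, where the same case split ($|Q|\geq 1$ handled trivially, $|Q|$ small handled via $LH_0$, moderate sizes absorbed into the constant using $p_+<\infty$) appears. One tiny imprecision: when you claim ``the last ratio tends to $n$,'' note that $\tfrac{-\log\ell(Q)}{-\log(\sqrt n\,\ell(Q))}\to 1$, not $n$, as $\ell(Q)\to 0^+$; the factor of $n$ is the separate multiplier already sitting in the numerator, and what matters — uniform boundedness of the whole expression on $(0,1/(2\sqrt n))$ — holds as you say. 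Your reduction of the final assertion via $p_-(Q)-p(x)=(p_-(Q)-p_+(Q))+(p_+(Q)-p(x))$ and the sign of the second term (together with the trichotomy $|Q|\gtrless 1$) is exactly how this is normally done.
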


\begin{remark}
Lemma~\ref{lemma:diening} is sometimes referred to as Diening's
condition, and it is the principal way in which we will apply the
$LH_0$ condition. 
\end{remark}

\begin{lemma} \label{lemma:p-infty-px}
\cite[Lemmas~2.7,~2.8]{capone-cruz-fio} 
Given two exponents $\rr,\,\sst \in\Pp_0$, suppose 
\[ |s(y)-r(y)| \leq \frac{C_0}{\log(e+|y|)}. \]
Then given any set $G$ and any non-negative measure $\mu$, for every
$t\geq 1$ there exists a constant $C=C(t,C_0)$ such that for all
functions $f$ such that $|f(y)|\leq 1$,
\[ \int_Q |f(y)|^{s(y)}\,d\mu(y) 
\leq C\int_G |f(y)|^{r(y)}\,d\mu(y)
+ \int_G \frac{1}{(e+|y|)^{tns_-(G)}}\,d\mu(y).  \]
If we instead assume that 
\[  0 \leq r(y)-s(y) \leq \frac{C_0}{\log(e+|y|)}, \]
then the same inequality holds for any function $f$.    
\end{lemma}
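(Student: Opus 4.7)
\textbf{Proof proposal for Lemma~\ref{lemma:p-infty-px}.}

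The strategy is to reduce the pointwise comparison of $|f(y)|^{s(y)}$ to $|f(y)|^{r(y)}$ plus a harmless tail term by using the decomposition $|f(y)|^{s(y)}=|f(y)|^{r(y)}\cdot|f(y)|^{s(y)-r(y)}$ and splitting the domain according to how small $|f(y)|$ is. The hypothesis $|s(y)-r(y)|\leq C_0/\log(e+|y|)$ is exactly what is needed to control the correction factor $|f(y)|^{s(y)-r(y)}$ on the set where $|f(y)|$ is not too small, while on the complementary set we absorb things into the tail term using the trivial bound $|f(y)|^{s(y)}\leq |f(y)|^{s_-(G)}$ (which relies on $|f|\leq 1$).

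In detail, for the first assertion I would fix $t\geq 1$ and split $G = G_1\cup G_2$, where
\[ G_1 = \{y\in G : |f(y)| \geq (e+|y|)^{-tn}\}, \qquad G_2 = G\setminus G_1. \]
On $G_1$, when $s(y)-r(y)\leq 0$ (the only nontrivial subcase, since $|f|\leq 1$), write
\[ |f(y)|^{s(y)-r(y)} \leq (e+|y|)^{tn|s(y)-r(y)|} \leq (e+|y|)^{tnC_0/\log(e+|y|)} = e^{tnC_0}, \]
so $|f(y)|^{s(y)}\leq e^{tnC_0}\,|f(y)|^{r(y)}$. On $G_2$, since $|f(y)|\leq 1$ and $s(y)\geq s_-(G)$, we have
\[ |f(y)|^{s(y)} \leq |f(y)|^{s_-(G)} \leq (e+|y|)^{-tns_-(G)}. \]
Adding the two pointwise bounds and integrating against $d\mu$ over $G$ yields the claimed inequality with $C=e^{tnC_0}$. (The $Q$ in the statement is evidently a typo for $G$.)

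For the second assertion the extra hypothesis $s(y)\leq r(y)$ removes the need for $|f|\leq 1$. Split $G=G'\cup G''$ with $G'=\{y\in G:|f(y)|\leq 1\}$ and $G''=\{y\in G:|f(y)|>1\}$. On $G'$, apply the first assertion directly to the truncated function $|f|\wedge 1$, which satisfies the required bound. On $G''$, since $s(y)\leq r(y)$ and $|f(y)|>1$, we have $|f(y)|^{s(y)}\leq |f(y)|^{r(y)}$. Summing the two estimates gives the desired inequality for arbitrary $f$.

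The step that requires the most care is the bound on $G_1$: the clean cancellation $(e+|y|)^{tnC_0/\log(e+|y|)}=e^{tnC_0}$ is exactly the place where the log-Hölder decay of $|s(y)-r(y)|$ is matched against the threshold $(e+|y|)^{-tn}$ defining $G_1$. Choosing any other threshold of the form $(e+|y|)^{-\alpha}$ with $\alpha>0$ would also work at the cost of a different constant, but the chosen $\alpha=tn$ is what produces the sharp exponent $tns_-(G)$ in the tail term on $G_2$. Everything else is a routine pointwise estimate followed by integration, so no subtler argument is needed.
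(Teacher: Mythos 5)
Your proof is correct, and it reproduces the standard argument for this class of "log-H\"older tail" estimates; the paper itself does not prove the lemma but cites it from the references listed, where essentially this same decomposition is used. The split $G = G_1\cup G_2$ according to whether $|f(y)|\geq (e+|y|)^{-tn}$, the cancellation $(e+|y|)^{tnC_0/\log(e+|y|)}=e^{tnC_0}$ on $G_1$, the use of $|f|\leq 1$ and $s(y)\geq s_-(G)$ on $G_2$, and, for the second part, further splitting on $\{|f|>1\}$ (where $s\leq r$ makes the comparison trivial) are exactly the expected ingredients. Two minor notational remarks: in your reduction of the second assertion you should note that applying the first part over $G'\subset G$ produces the tail exponent $s_-(G')$, which is harmless because $s_-(G')\geq s_-(G)$ and $e+|y|>1$ so the tail only improves; and you are right that $\int_Q$ in the statement is a typo for $\int_G$.
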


\begin{remark}
Lemma~\ref{lemma:p-infty-px} is the principal way in which we will apply
the $LH_\infty$ condition.  
\end{remark}

\section{Properties of $\A_\pp$ and $\A_{\vecpp}$ weights}
\label{section:char}

In this section we give some properties of the $\A_\pp$ and $\A_\vecpp$
weights that will be used in the proof of Theorem~\ref{thm:main}.
 For simplicity, throughout this section, assume
that $w_1,\,w_2$ are weights and let $w=w_1w_2$ and
$\vec{w}=(w_1,w_2,w)$.  Similarly, whenever we are given
$\pap,\,\pbp \in \Pp$, define $\pp$ by~\eqref{eqn:p-defn} and let
$\vecpp=(\pap,\pbp,\pp)$.  Note that in this case we always have that
$p_-\geq \frac{1}{2}$.

We begin by recalling the definition of $\A_\pp$ weights and then state
several results from~\cite{dcu-f-nPreprint2010} on their
properties.  

\begin{definition}\label{Def:Ap-variable}
Given $\pp \in \Pp$ and a weight $w$, we say $w\in \A_\pp$ if
\[ \sup_Q |Q|^{-1}\|w\chi_Q \|_\pp \|w^{-1}\chi_Q \|_{\cpp} < \infty. \]
\end{definition}

The next two lemmas show the relationship between $\A_\pp$ and
$A_\infty$ weights.

\begin{lemma} \label{lemma:App-Ainfty}
\cite[Lemma~3.4]{dcu-f-nPreprint2010}
Given $\pp \in \Pp$, suppose $\pp \in LH$ and $p_+<\infty$.  If $w\in
\A_\pp$, then $u(\cdot) = w(\cdot)^\pp \in A_\infty$. 
\end{lemma}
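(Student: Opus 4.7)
My plan is to verify the Coifman--Fefferman characterization of $A_\infty$: $u=w(\cdot)^{p(\cdot)}$ lies in $A_\infty$ provided there exist $\alpha,\beta\in(0,1)$ such that, for every cube $Q$ and every measurable $E\subset Q$ with $|E|\leq\alpha|Q|$, one has $u(E)\leq\beta u(Q)$. Set $F=Q\setminus E$ so $|F|\geq(1-\alpha)|Q|$, and apply the generalized H\"older inequality (Lemma~\ref{lemma:holder}):
\[ (1-\alpha)|Q|\leq|F|=\int_F w\cdot w^{-1}\,dx\lesssim\|w\chi_F\|_\pp\,\|w^{-1}\chi_F\|_\cpp. \]
By monotonicity $\|w^{-1}\chi_F\|_\cpp\leq\|w^{-1}\chi_Q\|_\cpp$ and the $\A_\pp$ bound $\|w\chi_Q\|_\pp\|w^{-1}\chi_Q\|_\cpp\lesssim|Q|$, this rearranges to $\|w\chi_F\|_\pp\gtrsim c_\alpha\|w\chi_Q\|_\pp$, where $c_\alpha$ depends on $\alpha$ and the $\A_\pp$ constant of $w$ but not on $Q$.

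Next, I would convert this norm inequality to an integral inequality for $u$. Set $\lambda=\|w\chi_Q\|_\pp$; since $p_+<\infty$ the modular is continuous, and $\rho_\pp(w\chi_Q/\lambda)=1$ gives $\int_Q(w/\lambda)^\pp\,dx=1$. From $\|w\chi_F\|_\pp\gtrsim c_\alpha\lambda$ together with Lemma~\ref{lemma:mod-norm} one obtains $\int_F(w/\lambda)^\pp\,dx\gtrsim c_\alpha^{p_+}$. It remains to eliminate the multiplier $\lambda^{-p(\cdot)}$. If $p(\cdot)$ were constant this would cancel trivially; in the variable case one shows that the oscillation of $\lambda^{-p(x)}$ on $Q$ is uniformly bounded, using Diening's estimate $|Q|^{p_+(Q)-p_-(Q)}\lesssim 1$ (Lemma~\ref{lemma:diening}) in the regime where $|Q|$ and $\lambda$ are comparable, together with the $LH_\infty$ substitution of Lemma~\ref{lemma:p-infty-px} (which replaces $p(\cdot)$ by $p_\infty$ up to harmless error) for cubes far from the origin. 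The upshot is $\int_F w^\pp\,dx\geq\eta\int_Q w^\pp\,dx$ for some $\eta>0$ independent of $Q$, hence $u(E)\leq(1-\eta)u(Q)$.

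The main obstacle is this last step. A direct estimate produces a factor of the form $\lambda^{p_+(Q)-p_-(Q)}$, but since $\lambda=\|w\chi_Q\|_\pp$ is not a priori comparable to a power of $|Q|$, the $LH_0$ hypothesis does not by itself control this factor. Overcoming it requires either decomposing $Q$ according to the size of $w$ relative to $\lambda$, or exploiting the $\A_\pp$ identity $\lambda\|w^{-1}\chi_Q\|_\cpp\asymp|Q|$ together with Lemma~\ref{lemma:harmonic} to tie $\lambda$ back to a genuine power of $|Q|$, at which point Diening's bound applies. This careful interplay between the $\A_\pp$ condition and the two halves of the $LH$ hypothesis is the technical heart of the argument.
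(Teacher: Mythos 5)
Your setup is sound through the modular estimates: the $A_\infty$ characterization you invoke is a legitimate one, the application of Lemma~\ref{lemma:gen-holder} (not Lemma~\ref{lemma:holder}, but close enough) and the $\A_\pp$ condition to obtain $\|w\chi_F\|_\pp\gtrsim c_\alpha\|w\chi_Q\|_\pp$ is correct, and the passage to $\int_F(w/\lambda)^{p(x)}\,dx\geq c_\alpha^{p_+}$ with $\int_Q(w/\lambda)^{p(x)}\,dx=1$ via Lemma~\ref{lemma:mod-norm} and modular continuity is valid since $p_+<\infty$. The gap is in the step you yourself flag. To conclude $u(F)\gtrsim u(Q)$ from the modular inequality you must control the ratio $\min\{\lambda,\lambda^{-1}\}^{\,p_+(Q)-p_-(Q)}$ from below, uniformly in $Q$. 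This splits into two cases. When $\lambda=\|w\chi_Q\|_\pp\leq 1$, Lemma~\ref{lemma:lemma3.3} gives exactly the needed bound $\lambda^{p_-(Q)-p_+(Q)}\lesssim 1$. But when $\lambda>1$ you need the \emph{reverse} inequality $\lambda^{p_+(Q)-p_-(Q)}\lesssim 1$, which Lemma~\ref{lemma:lemma3.3} does not give (there the exponent is negative and the stated bound is vacuous), and which is genuinely nontrivial for large cubes near the origin where $p_+(Q)-p_-(Q)$ can be of order $p_+-p_-$ while $\lambda$ is unbounded.

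Neither of your proposed resolutions closes this gap. Decomposing $Q$ according to the size of $w$ relative to $\lambda$ is not carried out, and it is not clear it would help without further ideas. The second suggestion---using the identity $\lambda\|w^{-1}\chi_Q\|_\cpp\asymp|Q|$ together with Lemma~\ref{lemma:harmonic} to tie $\lambda$ to a power of $|Q|$---cannot work: Lemma~\ref{lemma:harmonic} says $\|\chi_Q\|_\pp\approx|Q|^{1/p_Q}$, a statement about the norm of the \emph{unweighted} characteristic function, and gives no information about $\|w\chi_Q\|_\pp$ (nor about $\|w^{-1}\chi_Q\|_\cpp$), so $\lambda$ is not in general comparable to any power of $|Q|$. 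The actual argument must use $LH_\infty$ in a more structural way---not merely to bound $p_+(Q)-p_-(Q)$, but to replace $p(x)$ by $p_\infty$ up to a controlled error in the spirit of Lemma~\ref{lemma:p-infty-px}---and this substitution needs to be performed on the modular before taking the ratio, rather than as a multiplicative perturbation factor. As written the proof is incomplete at its technical heart.
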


\begin{lemma} \label{lemma:p-infty-cond}
  \cite[Lemmas~3.5,~3.6]{dcu-f-nPreprint2010} Given $\pp \in \Pp$,
  suppose $\pp \in LH$ and $p_+<\infty$.  Let $w\in \A_\pp$ and let
  $u(x)=w(x)^{p(x)}$.  Then given any cube $Q$ such that
  $\|w\chi_Q\|_\pp \geq 1$, then
  $\|w\chi_Q\|_\pp \approx u(Q)^{\frac{1}{p_\infty}}$.  Moreover,
  given any $E\subset Q$,
\[ \frac{|E|}{|Q|} \lesssim
  \bigg(\frac{u(E)}{u(Q)}\bigg)^{\frac{1}{p_\infty}}. \]
The implicit constants depend only on $w$ and $\pp$.
\end{lemma}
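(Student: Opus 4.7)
The lemma makes two claims: (a) the norm-modular equivalence $\|w\chi_Q\|_\pp \approx u(Q)^{1/p_\infty}$ under the hypothesis $\|w\chi_Q\|_\pp \geq 1$, and (b) the set-ratio estimate for $E\subset Q$. My plan is to establish (a) as the technical core and then derive (b) from it together with the $\A_\pp$ hypothesis and Hölder's inequality.

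For (a), set $\lambda = u(Q)^{1/p_\infty}$. Since $\|w\chi_Q\|_\pp \geq 1$, Lemma~\ref{lemma:mod-norm} gives $u(Q) = \rho_\pp(w\chi_Q) \geq \|w\chi_Q\|_\pp^{p_-} \geq 1$, so $\lambda \geq 1$. Appealing to Lemma~\ref{lemma:mod-norm} again, it suffices to show $\rho_\pp(w\chi_Q/(c\lambda)) \leq 1$ for a small constant $c$ (upper bound for the norm) and $\rho_\pp(w\chi_Q/(C\lambda)) \geq 1$ for a large $C$ (lower bound). Writing
\[
 \rho_\pp(w\chi_Q/\lambda) = \lambda^{-p_\infty}\int_Q u(x)\,\lambda^{p_\infty - p(x)}\,dx,
\]
the task reduces to showing $\int_Q u(x)\lambda^{p_\infty - p(x)}\,dx \approx u(Q)$, i.e. to controlling the fluctuation factor $\lambda^{p_\infty - p(x)}$. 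This is where both log-Hölder hypotheses enter: Lemma~\ref{lemma:p-infty-px} (from $LH_\infty$) lets us replace $p(x)$ by $p_\infty$ in the modular at the cost of an integrable error involving $(e+|x|)^{-tnp_-}$, while Lemma~\ref{lemma:diening} (from $LH_0$) is needed on small cubes where $p(\cdot)$ is essentially constant so that the $LH_\infty$ replacement is too weak. To apply Lemma~\ref{lemma:p-infty-px} cleanly, I would split $Q$ according to whether $w(x)/\lambda \leq 1$ or $>1$, since the lemma offers the two different forms in the two regimes.

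For (b), by Hölder's inequality (Lemma~\ref{lemma:gen-holder}) and the $\A_\pp$ condition,
\[
 |E| \lesssim \|w\chi_E\|_\pp\,\|w^{-1}\chi_E\|_\cpp \leq \|w\chi_E\|_\pp\,\|w^{-1}\chi_Q\|_\cpp \lesssim \|w\chi_E\|_\pp\cdot\frac{|Q|}{\|w\chi_Q\|_\pp},
\]
so $|E|/|Q| \lesssim \|w\chi_E\|_\pp/\|w\chi_Q\|_\pp$. When $\|w\chi_E\|_\pp \geq 1$, applying (a) to both numerator and denominator directly yields $|E|/|Q| \lesssim (u(E)/u(Q))^{1/p_\infty}$. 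When $\|w\chi_E\|_\pp < 1$ (in which case $u(E) \leq 1 \leq u(Q)$), I would argue separately, combining the $A_\infty$ property of $u = w^{p(\cdot)}$ (Lemma~\ref{lemma:App-Ainfty}) with Lemma~\ref{lemma:Ainfty-prop}, and possibly a direct modular computation that exploits the localization of $w\chi_E$ to produce the correct exponent $1/p_\infty$ rather than the crude $1/p_+$ given by the modular-norm inequality alone.

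The main obstacle will be the bookkeeping in step (a): one must carefully track contributions from regions where $w(x)$ is large or small relative to $\lambda$, and where $p(x)$ lies above or below $p_\infty$, while verifying that the error terms from Lemma~\ref{lemma:p-infty-px} stay negligible against the main modular contribution $u(Q)/\lambda^{p_\infty}$. The same interplay of $LH_0$ on small cubes and $LH_\infty$ on large cubes that underlies the linear theory of~\cite{dcu-f-nPreprint2010} will be essential here.
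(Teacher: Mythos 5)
This lemma is cited directly from \cite{dcu-f-nPreprint2010} (Lemmas~3.5 and~3.6 there), so the paper at hand supplies no proof of its own; I can only assess your outline on its merits.

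For part (a), your reduction to showing $\rho_\pp(w\chi_Q/\lambda)\approx 1$ with $\lambda = u(Q)^{1/p_\infty}$ is the natural strategy and the factor $\lambda^{p_\infty-p(x)}$ is indeed the crux. One caution: the $LH_0$ tool you cite, Lemma~\ref{lemma:diening}, controls $|Q|^{p_-(Q)-p_+(Q)}$, whereas the quantity you must tame scales with $\|w\chi_Q\|_\pp$, not with $|Q|$; the weighted analogue Lemma~\ref{lemma:lemma3.3} is what is actually tuned to this situation. The more serious gap is in part (b). Your H\"older/$\A_\pp$ reduction correctly gives $|E|/|Q|\lesssim\|w\chi_E\|_\pp/\|w\chi_Q\|_\pp$, and then everything hinges on proving $\|w\chi_E\|_\pp\lesssim u(E)^{1/p_\infty}$ \emph{also in the case} $\|w\chi_E\|_\pp<1$. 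In that regime Lemma~\ref{lemma:mod-norm} gives only $\|w\chi_E\|_\pp\leq u(E)^{1/p_+}$, and since $u(E)\leq 1$ and $p_+\geq p_\infty$, $u(E)^{1/p_+}\geq u(E)^{1/p_\infty}$, i.e.\ the crude bound is strictly \emph{weaker} than what is needed. The $A_\infty$ route you gesture at (Lemmas~\ref{lemma:App-Ainfty} and~\ref{lemma:Ainfty-prop}) yields $|E|/|Q|\lesssim(u(E)/u(Q))^{\eta}$ for \emph{some} $\eta>0$ depending on the $A_\infty$ class of $u$, but not for $\eta=1/p_\infty$ specifically—and the precise value $1/p_\infty$ is used essentially in the paper (for instance in the estimate for $J_2$ and in~\eqref{P_infty_Bounded}), where it must cancel against other $p_\infty$-exponents. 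What is actually required is a small-norm companion to part (a): show $\rho_\pp(w\chi_E/\mu)\approx 1$ with $\mu=u(E)^{1/p_\infty}\leq 1$, which is not a corollary of (a) but a separate, parallel treatment of the fluctuation factor $\mu^{p_\infty-p(x)}$, again combining Lemma~\ref{lemma:lemma3.3} with Lemma~\ref{lemma:p-infty-px}. Your proposal names roughly the right ingredients but does not carry out this step, and without it part (b) remains unproven.
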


\begin{remark}
To apply Lemma~\ref{lemma:p-infty-cond}, note that by
Lemma~\ref{lemma:mod-norm}, $\|w\chi_Q\|_\pp \geq 1$ if and only if
$u(Q) \geq 1$. 
\end{remark}

The next result is a weighted version of the Diening
condition in Lemma~\ref{lemma:diening} and will be used to apply the
$LH_0$ condition.  

\begin{lemma} \label{lemma:lemma3.3}
\cite[Lemma~3.3]{dcu-f-nPreprint2010}
Given $\pp\in \Pp$ such that $\pp\in LH$, if $w\in \A_\pp$, then for
all cubes $Q$,
\[ \|w\chi_Q\|_\pp^{p_-(Q)-p_+(Q)} \lesssim 1; \]
the implicit constant depends only on $\pp$ and $w$.
\end{lemma}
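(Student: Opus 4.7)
I would split by size of $\|w\chi_Q\|_\pp$. When $\|w\chi_Q\|_\pp\ge 1$ the conclusion is immediate, since $p_-(Q)-p_+(Q)\le 0$ forces the left-hand side of the inequality to be at most $1$. The substance of the proof lies in the case $\|w\chi_Q\|_\pp<1$.

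Setting $u=w^\pp$, so that $\rho_\pp(w\chi_Q)=u(Q)$, I would first rerun the proof of Lemma~\ref{lemma:mod-norm} on $Q$ with $p_\pm(Q)$ in place of the global $p_\pm$; this is legitimate because both $\|w\chi_Q\|_\pp$ and $\rho_\pp(w\chi_Q)$ depend only on the values of $\pp$ on $Q$. This produces
\[
u(Q)^{1/p_-(Q)}\le\|w\chi_Q\|_\pp\le u(Q)^{1/p_+(Q)}.
\]
Raising the lower bound to the non-positive power $p_-(Q)-p_+(Q)$ and using $p_-(Q)\ge 1$ together with $u(Q)\le 1$, the lemma reduces to proving $u(Q)^{p_+(Q)-p_-(Q)}\gtrsim 1$.

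To establish this lower bound I would use Lemma~\ref{lemma:p-infty-cond} via a reference cube. H\"older's inequality (Lemma~\ref{lemma:holder}) combined with the $\A_\pp$ condition gives $\|w\chi_{Q_0}\|_\pp\|w^{-1}\chi_{Q_0}\|_\cpp\approx|Q_0|$, which tends to infinity with $|Q_0|$, so one can choose a cube $Q^*\supset Q$ with $\|w\chi_{Q^*}\|_\pp\ge 1$ (modulo the exceptional case discussed below). Lemma~\ref{lemma:p-infty-cond} then gives $u(Q^*)\approx\|w\chi_{Q^*}\|_\pp^{p_\infty}\gtrsim 1$, and its ``moreover'' clause applied with $E=Q\subset Q^*$ yields
\[
u(Q)\gtrsim\Big(\tfrac{|Q|}{|Q^*|}\Big)^{p_\infty}u(Q^*)\gtrsim\Big(\tfrac{|Q|}{|Q^*|}\Big)^{p_\infty}.
\]
Raising to the power $p_+(Q)-p_-(Q)\ge 0$ and combining with Lemma~\ref{lemma:diening} (which gives $|Q|^{p_+(Q)-p_-(Q)}\gtrsim 1$) together with the $LH$-control of $p_+(Q)-p_-(Q)$ on the enlargement from $Q$ to $Q^*$ gives the desired estimate.

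The main obstacle is the exceptional case, where no cube $Q^*\supset Q$ satisfies $\|w\chi_{Q^*}\|_\pp\ge 1$ (equivalently, $\int_{\subRn}w^\pp<1$). Here one instead runs the symmetric argument using that $w^{-1}\in\mathcal{A}_\cpp$ and $\sigma:=w^{-\cpp}\in A_\infty$: the norm-product identity forces $\|w^{-1}\chi_{Q^*}\|_\cpp\ge 1$ for some $Q^*\supset Q$, and applying the analogue of Lemma~\ref{lemma:p-infty-cond} for $w^{-1}$ and $\cpp$ gives $\sigma(Q)\gtrsim|Q|^{p'_\infty}$. Combining this with the modular--norm comparison for $w^{-1}$ in $\Lcp$ and the relation $\|w\chi_Q\|_\pp\approx|Q|/\|w^{-1}\chi_Q\|_\cpp$ yields a lower bound on $\|w\chi_Q\|_\pp$ which, via Lemma~\ref{lemma:diening}, closes the argument.
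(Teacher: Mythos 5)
Your strategy is genuinely different from the one the paper adapts for the closely related Proposition~\ref{q-relation}: there, the argument stays at the level of norms, fixes a reference cube $Q_0$ centered at the origin with $|Q_0|=1$, and uses the $\A_\pp$ inequality $|Q|\lesssim\|w\chi_Q\|_\pp\|w^{-1}\chi_{Q_0'}\|_\cpp^{-1}$ (for a suitable $Q_0'$) to transfer the estimate to $Q_0$, handling nearby cubes with Diening's condition and far-away cubes with $LH_\infty$. You instead pass to the modular $u=w^\pp$ and lean on Lemma~\ref{lemma:p-infty-cond}. That reduction (the first two paragraphs) is sound.

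The gap is in the sentence ``together with the $LH$-control of $p_+(Q)-p_-(Q)$ on the enlargement from $Q$ to $Q^*$ gives the desired estimate.'' That phrase hides exactly the content of the lemma. After your reduction you need $(|Q|/|Q^*|)^{p_\infty(p_+(Q)-p_-(Q))}\gtrsim 1$, i.e.\ both $|Q|^{p_+(Q)-p_-(Q)}\gtrsim 1$ (which Diening gives) and $|Q^*|^{p_+(Q)-p_-(Q)}\lesssim 1$. The second bound is not automatic: if $Q^*$ is chosen merely as ``some cube containing $Q$ with $\|w\chi_{Q^*}\|_\pp\ge 1$'', then $|Q^*|$ is uncontrolled and the factor $|Q^*|^{-p_\infty(p_+(Q)-p_-(Q))}$ can be arbitrarily small. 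To make the argument close you must pick $Q^*$ canonically -- for instance, fix once and for all a cube $Q_0$ centered at the origin with $u(Q_0)\ge 1$ (possible outside your exceptional case, after enlarging any cube where the norm exceeds $1$), and let $Q^*$ be the smallest cube containing $Q\cup Q_0$, so that $\ell(Q^*)\approx\ell(Q)+\dist(Q,0)+1$. With that choice, $(p_+(Q)-p_-(Q))\log|Q^*|$ is bounded, but verifying this is nontrivial: one has to play $LH_0$ off against $LH_\infty$, using $p_+(Q)-p_-(Q)\lesssim 1/\log(1/\ell(Q))$ when $\ell(Q)<1$ to absorb the $\log(1/\ell(Q))$ contribution and $p_+(Q)-p_-(Q)\lesssim 1/\log(e+\dist(Q,0))$ to absorb the $\log\dist(Q,0)$ contribution. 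Your proposal neither fixes $Q^*$ nor carries out this two-sided estimate, so as written the step does not go through.

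A second, smaller, imprecision: in the exceptional case you write $\sigma(Q)\gtrsim|Q|^{p_\infty'}$, but the analogue of Lemma~\ref{lemma:p-infty-cond} only yields $\sigma(Q)\gtrsim(|Q|/|Q^*|)^{p_\infty'}\sigma(Q^*)$, so the same issue with controlling $|Q^*|$ recurs and must be addressed the same way. Finally, note that your use of Lemma~\ref{lemma:p-infty-cond} implicitly requires $p_+<\infty$; this is consistent with the lemma being meaningful (otherwise $p_+(Q)=\infty$ makes the left-hand side unbounded when $\|w\chi_Q\|_\pp<1$), but it is worth stating.
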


The final lemma is an integral estimate that, in conjunction with
Lemma~\ref{lemma:p-infty-px} will be used to apply the $LH_\infty$
condition. 

\begin{lemma} \label{lemma:infty-bound}
\cite[Inequality~(3.3)]{dcu-f-nPreprint2010}
Given $\pp \in \Pp$, suppose $\pp \in LH$.  If $w\in \A_\pp$, then
there exists a constant $t>1$, depending only on $w$, $\pp$ and $n$,
such that
\[ \int_\subRn \frac{w(x)^{p(x)}}{(e+|x|)^{tnp_-}}\,dx \leq 1. \]
\end{lemma}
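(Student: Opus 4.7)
The strategy is to pass to $u(x) = w(x)^{p(x)}$, which by Lemma~\ref{lemma:App-Ainfty} lies in $A_\infty$, and to estimate
\[ I(t) := \int_\subRn \frac{u(x)}{(e+|x|)^{tnp_-}}\,dx \]
by splitting over dyadic cubes about the origin. The plan is to show $I(t) < \infty$ for all sufficiently large $t$, and then to observe that $I(t) \to 0$ as $t \to \infty$ by dominated convergence, so some $t > 1$ depending only on $w$, $\pp$ and $n$ forces $I(t) \leq 1$. The case $u(\R^n) \leq 1$ is trivial since $(e+|x|)^{tnp_-} \geq 1$; hence I may assume $u(\R^n) > 1$ and let $Q_k$ denote the cube centered at the origin of side length $2^k$, with $k_0$ the smallest integer for which $u(Q_{k_0}) \geq 1$.

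The key step is to extract a polynomial upper bound on $u(Q_k)$ as $k \to \infty$. For $k \geq k_0$ I apply Lemma~\ref{lemma:p-infty-cond} to the cube $Q_k$ with subset $E = Q_{k_0}$, obtaining $|Q_{k_0}|/|Q_k| \lesssim (u(Q_{k_0})/u(Q_k))^{1/p_\infty}$ and hence $u(Q_k) \lesssim u(Q_{k_0})\cdot 2^{(k-k_0)np_\infty}$. For $k < k_0$ the trivial bound $u(Q_k) < 1$ already suffices. Absorbing the factor $u(Q_{k_0})\cdot 2^{-k_0 np_\infty}$, which depends only on $w$, $\pp$ and $n$, I obtain $u(Q_k) \leq C\cdot 2^{knp_\infty}$ uniformly in $k$.

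Next I decompose $I(t)$ as the integral over $Q_0$ plus the sum of integrals over the dyadic annuli $A_k = Q_k \setminus Q_{k-1}$, $k \geq 1$. On $A_k$ one has $e + |x| \gtrsim 2^k$, so the $k$th term is bounded by $C'\,2^{-ktnp_-}\,u(Q_k) \lesssim 2^{kn(p_\infty - tp_-)}$. Since $p_- \leq p_\infty$ (because $p_-$ is the essential infimum over all of $\R^n$), for any $t > p_\infty/p_-$ the geometric series converges to yield
\[ I(t) \leq C''(w,\pp,n)\cdot \frac{2^{n(p_\infty - tp_-)}}{1 - 2^{n(p_\infty - tp_-)}}, \]
which tends to $0$ as $t \to \infty$. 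Hence some $t > \max(1, p_\infty/p_-)$, depending only on $w$, $\pp$ and $n$, produces $I(t) \leq 1$, as required.

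The main obstacle is the polynomial growth estimate $u(Q_k) \lesssim 2^{knp_\infty}$. Invoking only the doubling of $u \in A_\infty$ would give $u(Q_k) \lesssim D^k u(Q_0)$ with the $A_\infty$ doubling constant $D$, which can be much larger than $2^{np_\infty}$ and would obstruct convergence of the series for any reasonable $t$. It is Lemma~\ref{lemma:p-infty-cond}---which combines the $LH$ regularity of $\pp$ with the $\A_\pp$ condition---that replaces the generic doubling constant by the sharper $2^{np_\infty}$, and this improvement is what forces convergence of the series at the explicit finite threshold $t > p_\infty/p_-$.
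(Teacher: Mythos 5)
This lemma is cited from~\cite[Inequality~(3.3)]{dcu-f-nPreprint2010} and is not proved in the paper, so there is no internal proof to compare against; I evaluate your argument on its own merits.

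Your strategy is sound and essentially reproduces the standard argument: pass to $u=w^{p(\cdot)}$, use Lemma~\ref{lemma:p-infty-cond} (together with the post-lemma remark that $u(Q)\geq 1$ is equivalent to $\|w\chi_Q\|_{p(\cdot)}\geq 1$ via Lemma~\ref{lemma:mod-norm}) to get the polynomial growth $u(Q_k)\lesssim 2^{knp_\infty}$ for $k\geq 1$, split into dyadic annuli, and then deduce that some $t>1$ makes the integral at most $1$. The observation that generic $A_\infty$ doubling is not enough and that the $LH/\A_{p(\cdot)}$ machinery is what buys the sharper power-growth rate $2^{np_\infty}$ is exactly the right point.

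There is, however, a small but genuine slip in the displayed estimate. On the annulus $A_k$ one only has $e+|x|\geq c\,2^k$ for some absolute $c\in(0,1)$ (not $c=1$: for $x\in A_k$ you only get $|x|>2^{k-2}$), so the per-annulus bound is really
\[
\int_{A_k}\frac{u}{(e+|x|)^{tnp_-}}\,dx \leq c^{-tnp_-}\,2^{-ktnp_-}\,u(Q_k) \lesssim c^{-tnp_-}\,2^{kn(p_\infty-tp_-)}.
\]
The factor $c^{-tnp_-}$ is \emph{not} independent of $t$, and after summing it overwhelms the geometric decay: the right-hand side of your displayed bound for $I(t)$ actually carries a hidden $t$-dependent prefactor of order $2^{O(t)}$, and one cannot conclude from that computation alone that the bound tends to $0$ as $t\to\infty$. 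Fortunately this does not sink the proof, because the dominated-convergence step you announce at the outset is correct and does not need the displayed formula: once you know $I(t_0)<\infty$ for \emph{one} $t_0>\max(1,p_\infty/p_-)$ (which your annular estimate does establish), then for $t\geq t_0$ one has the elementary bound $I(t)\leq e^{-(t-t_0)np_-}I(t_0)$ (since $(e+|x|)^{(t-t_0)np_-}\geq e^{(t-t_0)np_-}$), and choosing $t$ large enough that $e^{-(t-t_0)np_-}I(t_0)\leq 1$ finishes the proof, with $t$ depending only on $w$, $p(\cdot)$ and $n$. I would recommend replacing the final displayed inequality with this two-step finiteness-then-scaling argument, which is what your opening plan actually describes.
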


\medskip

We now turn to the $\A_\vecpp$ condition.  
If $w_1\in \A_\pap$ and $w_2 \in \A_\pbp$, then by Lemma~\ref{lemma:gen-holder} we have
that $\vec{w}\in \A_\vecpp$.  However, this inclusion is proper, since
it is in the constant exponent case.  Nevertheless,
we can  characterize the bilinear $\A_\vecpp$ weights in terms
of the $\A_\pp$ condition.   In the constant exponent case this is
proved in~\cite{MR2483720}, and our argument is adapted from theirs.

\begin{proposition}\label{AP-charachterization}
Given $\vec{w}$ and $\vecpp$, $\vec{w} \in \mathcal{A}_{\vec{p}(\cdot)}$ if and only if 
    \begin{equation}\label{EQ-characterizationApcondition}
      \begin{cases}
          w_{j}^{-\frac{1}{2}} \in \mathcal{A}_{2\cpjp} , & j=1,2\\
          w^{\frac{1}{2}}\in \mathcal{A}_{2\pp}, &
       \end{cases}
    \end{equation}
\end{proposition}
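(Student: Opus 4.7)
The strategy rests on the rescaling identity from Lemma~\ref{lemma:rescale}, which gives $\|w\chi_Q\|_{\pp} = \|w^{1/2}\chi_Q\|_{2\pp}^2$ and $\|w_j^{-1}\chi_Q\|_{\cpjp} = \|w_j^{-1/2}\chi_Q\|_{2\cpjp}^2$. Taking square roots, the $\A_\vecpp$ condition becomes
\begin{equation*}
\sup_Q |Q|^{-1}\|w^{1/2}\chi_Q\|_{2\pp}\|w_1^{-1/2}\chi_Q\|_{2\cpap}\|w_2^{-1/2}\chi_Q\|_{2\cpbp} < \infty.
\end{equation*}
From~\eqref{eqn:p-defn} one has the key identity $\tfrac{1}{2\pp}+\tfrac{1}{2\cpap}+\tfrac{1}{2\cpbp}=1$, from which follow the H\"older-ready pairings $\tfrac{1}{(2\pp)'} = \tfrac{1}{2\cpap}+\tfrac{1}{2\cpbp}$, $\tfrac{1}{(2\cpap)'} = \tfrac{1}{2\pp}+\tfrac{1}{2\cpbp}$, and the dual sum $\tfrac{1}{(2\pp)'}+\tfrac{1}{(2\cpap)'}+\tfrac{1}{(2\cpbp)'}=2$.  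These identities let me play H\"older's inequality against the trivial identities $w^{\pm 1/2} = w_1^{\pm 1/2}w_2^{\pm 1/2}$ in opposite directions.

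For the forward direction, suppose $\vec{w}\in\A_\vecpp$. To obtain $w^{1/2}\in\A_{2\pp}$, I write $w^{-1/2} = w_1^{-1/2}w_2^{-1/2}$ and apply H\"older's inequality (Lemma~\ref{lemma:holder}) with exponents $2\cpap, 2\cpbp$ to get $\|w^{-1/2}\chi_Q\|_{(2\pp)'}\lesssim \|w_1^{-1/2}\chi_Q\|_{2\cpap}\|w_2^{-1/2}\chi_Q\|_{2\cpbp}$. Multiplying by $\|w^{1/2}\chi_Q\|_{2\pp}=\|w\chi_Q\|_\pp^{1/2}$ and dividing by $|Q|$ produces the square root of the $\A_\vecpp$ constant. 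The conditions $w_j^{-1/2}\in\A_{2\cpjp}$ follow symmetrically from the factorization $w_1^{1/2} = w^{1/2}w_2^{-1/2}$ (resp.\ $w_2^{1/2} = w^{1/2}w_1^{-1/2}$) together with H\"older on the exponent pair $(2\pp,2\cpbp)$ (resp.\ $(2\pp,2\cpap)$).

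For the reverse direction, I multiply the three $\A_\pp$ inequalities to obtain
\begin{equation*}
\bigl(\|w^{1/2}\chi_Q\|_{2\pp}\|w_1^{-1/2}\chi_Q\|_{2\cpap}\|w_2^{-1/2}\chi_Q\|_{2\cpbp}\bigr)\bigl(\|w^{-1/2}\chi_Q\|_{(2\pp)'}\|w_1^{1/2}\chi_Q\|_{(2\cpap)'}\|w_2^{1/2}\chi_Q\|_{(2\cpbp)'}\bigr) \lesssim |Q|^3.
\end{equation*}
The crux is to lower bound the second factor by $|Q|^2$, exploiting the trivial identity $w^{-1/2}w_1^{1/2}w_2^{1/2}\equiv 1$. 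Since the reciprocals of the three dual exponents sum to $2$, iterating Lemma~\ref{lemma:holder} twice (with intermediate exponent $s(\cdot)=\tfrac{2\pp}{2\pp+1}\in\Pp_0$) gives
\begin{equation*}
\|\chi_Q\|_{1/2} = \bigl\|w^{-1/2}w_1^{1/2}w_2^{1/2}\chi_Q\bigr\|_{1/2} \lesssim \|w^{-1/2}\chi_Q\|_{(2\pp)'}\|w_1^{1/2}\chi_Q\|_{(2\cpap)'}\|w_2^{1/2}\chi_Q\|_{(2\cpbp)'},
\end{equation*}
and a direct computation from the Luxemburg definition shows $\|\chi_Q\|_{1/2}=|Q|^2$. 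Dividing then yields $\|w^{1/2}\chi_Q\|_{2\pp}\|w_1^{-1/2}\chi_Q\|_{2\cpap}\|w_2^{-1/2}\chi_Q\|_{2\cpbp}\lesssim |Q|$, and squaring recovers the $\A_\vecpp$ condition.

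The main obstacle is precisely this ``backwards'' use of H\"older to extract a lower bound on a product of three norms via the $L^{1/2}$ quasi-norm; the fact that Lemma~\ref{lemma:holder} is stated for exponents in the full class $\Pp_0$ (allowing values below $1$) is essential for this step, and the built-in lower bound $p_-\ge \tfrac12$ of the setup ensures every intermediate exponent is well-defined.
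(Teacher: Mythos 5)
Your argument is correct and is essentially the paper's proof: the forward direction is identical, and your reverse direction establishes precisely the same key lower bound $|Q|^2\lesssim\|w^{-1/2}\chi_Q\|_{(2\pp)'}\prod_{j}\|w_j^{1/2}\chi_Q\|_{(2\cpjp)'}$ that the paper derives. The only cosmetic difference is that you apply the quasi-norm H\"older directly at exponent $\tfrac{1}{2}$ (dual reciprocals summing to $2$), whereas the paper rescales by one more factor of $\tfrac12$ so as to apply the $L^1$ generalized H\"older of Lemma~\ref{lemma:gen-holder} to the identity $1=\langle w^{-1/4}w_1^{1/4}w_2^{1/4}\rangle_Q^2$; the two are equivalent by Lemma~\ref{lemma:rescale}.
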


\begin{remark}
Note that since $p_- \geq\frac{1}{2}$, 
 $2\pp \in \Pp$ and $\A_{2\pp}$ is well defined.
\end{remark}

\begin{proof}
First assume that $\vec{w}\in \A_\vecpp$. Then for
a.e. $x$,
\begin{multline*}
 \frac{1}{2p(x)}+\frac{1}{2p_{2}^{\prime}(x)}
= 1-\frac{1}{(2p)^{\prime}(x)}+\frac{1}{2p_{2}^{\prime}(x)}\\
= 1-\left(\frac{1}{2p_{1}^{\prime}(x)}+\frac{1}{2p_{2}^{\prime}(x)}\right)
+ \frac{1}{2p_{2}^{\prime}(x)}
= 1-\frac{1}{2p_{1}^{\prime}(x)}
= \frac{1}{(2p_{1}^{\prime})^{\prime}(x)}. 
\end{multline*}
Therefore, by Lemmas~\ref{lemma:holder} and~\ref{lemma:rescale}, and by
  the definition of $\A_\vecpp$,
   \begin{align*}
  \|w_{1}^{-\frac{1}{2}}\chi_{Q}\|_{2\cpap}\|w_{1}^{\frac{1}{2}}\chi_{Q}\|_{(2\cpap)^{\prime}}
& =  \|w_{1}^{-\frac{1}{2}}\chi_{Q}\|_{2\cpap}
\|w_{1}^{\frac{1}{2}}w_{2}^{\frac{1}{2}}w_{2}^{-\frac{1}{2}}\chi_Q \|_{(2p_{1}^{\prime})^{\prime}(\cdot)} \\
    &\lesssim \|w_{1}^{-\frac{1}{2}}\chi_{Q}\|_{2\cpap}
\|w^{\frac{1}{2}}\chi_{Q}\|_{2\pp}
\|w_{2}^{-\frac{1}{2}}\chi_{Q}\|_{2\cpbp}\\
    &= \bigg(
      \|w\chi_{Q}\|_{\pp}\prod_{j=1}^{2}{\|w_{j}^{-1}\chi_Q \|_{\cpjp}} 
\bigg)^{\frac{1}{2}}\\
    &\lesssim \abs{Q}.
   \end{align*}
Hence, $w_{1}^{-\frac{1}{2}} \in \mathcal{A}_{2\cpap}$.  The same
argument shows that $w_{2}^{-\frac{1}{2}} \in \mathcal{A}_{2\cpbp}$.
Finally, we have that
\begin{align*}
\|w^{\frac{1}{2}}\chi_{Q}\|_{2\pp}\|w^{-\frac{1}{2}}\chi_{Q}\|_{(2p)^{\prime}(\cdot)}
&\lesssim  \|w^{\frac{1}{2}}\chi_{Q}\|_{2\pp}
\prod_{j=1}^{2}\|w_{j}^{-\frac{1}{2}}\chi_{Q}\|_{2\cpjp} \\
&= \bigg(
\|w\chi_Q \|_{\pp}\prod_{j=1}^{2}{\|w_{j}^{-1}\chi_Q \|_{\cpjp}}
\bigg)^{\frac{1}{2}} 
\lesssim \abs{Q}.
\end{align*}
Thus~\eqref{EQ-characterizationApcondition} holds.

\smallskip

 Conversely, now suppose that \eqref{EQ-characterizationApcondition}
 holds. Then for a.e. $x$, 
 \begin{equation*}
       \frac{1}{2(2p)^{\prime}(x)}
+\frac{1}{2(2p_{1}^{\prime})^{\prime}(x)}
+\frac{1}{2(2p_{2}^{\prime})^{\prime}(x)}=1,
       \end{equation*}
so by Lemmas~\ref{lemma:gen-holder} and \ref{lemma:rescale},  for any
cube $Q$,
  \begin{align*}
  1
  = \langle w^{-\frac{1}{4}}w_{1}^{\frac{1}{4}}w_{2}^{\frac{1}{4}} \rangle_{Q}^{2}
  &\lesssim
  \abs{Q}^{-2}\|w^{-\frac{1}{4}}\chi_Q \|_{2(2p)^{\prime}(\cdot)}^{2}
  \prod_{j=1}^{2}\|w_{j}^{\frac{1}{4}}\chi_{Q}\|_{2(2p_{j}^{\prime})^{\prime}(\cdot)}^{2} \\
  &= \abs{Q}^{-2}
  \|w^{-\frac{1}{2}}\chi_Q\|_{(2p)^{\prime}(\cdot)}
  \prod_{j=1}^{2}\| w_{j}^{\frac{1}{2}}\chi_Q\|_{(2p_{j}^{\prime})^{\prime}(\cdot)}.
  \end{align*}
Therefore,
   \begin{multline*}
\|w \chi_Q\|_{\pp}^{\frac{1}{2}}\prod_{j=1}^{2}
\|w_{j}^{-1}\chi_Q\|_{\cpjp}^{\frac{1}{2}} 
= \|w^{\frac{1}{2}}\chi_Q\|_{2\pp}
\prod_{j=1}^{2}\|w_{j}^{-\frac{1}{2}}\chi_Q\|_{2\cpjp}\\
\lesssim \abs{Q}^{-2} \|w^{\frac{1}{2}}\chi_Q\|_{2\pp}
\|w^{-\frac{1}{2}}\chi_Q\|_{(2p)^{\prime}(\cdot)}
\prod_{j=1}^{2}\|
w_{j}^{\frac{1}{2}}\chi_Q\|_{(2p_{j}^{\prime})^{\prime}(\cdot)}
\|w_{j}^{-\frac{1}{2}}\chi_Q\|_{2\cpjp}
 \lesssim \abs{Q},
\end{multline*}
and so   $w \in \mathcal{A}_{\vec{p}(\cdot)}$ .   
\end{proof}

Proposition~\ref{AP-charachterization} has the following corollary
which will be used in our proof of Theorem~\ref{thm:main}.  

\begin{corollary} \label{cor:Ainfty}
Given $\pap,\,\pbp \in \Pp$, suppose $p_j(\cdot)\in LH$ and
$(p_j)_+<\infty$, $j=1,2$.  If
$\vec{w}\in \A_\vecpp$, then $u(\cdot)=w(\cdot)^\pp$ and
$\sigma_j(\cdot)=w_j(\cdot)^{-p_j'(\cdot)}$, $j=1,2$, are in $A_\infty$.  
\end{corollary}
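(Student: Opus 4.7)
The plan is to derive the corollary as an immediate consequence of Proposition~\ref{AP-charachterization} combined with Lemma~\ref{lemma:App-Ainfty}. The proposition converts the bilinear $\A_\vecpp$ condition into three scalar $\A_\pp$-type conditions, and the lemma says that such conditions force the associated pointwise power to lie in $A_\infty$.

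First, I would verify that the three exponents $2\pp$, $2\cpap$, $2\cpbp$ satisfy the hypotheses of Lemma~\ref{lemma:App-Ainfty}. Since $1/p(x) = 1/p_1(x) + 1/p_2(x)$ and both $1/p_j(\cdot)$ are log-H\"older continuous, so is $1/p(\cdot)$; hence $\pp \in LH$, and multiplying by the constant $2$ preserves log-H\"older continuity, so $2\pp \in LH$. Similarly $\cpjp \in LH$ because $1/\cpjp = 1 - 1/\pjp$, and therefore $2\cpjp \in LH$. For the boundedness of the exponents: $1/p_+ \geq 1/(p_1)_+ + 1/(p_2)_+ > 0$, so $(2\pp)_+ = 2p_+ < \infty$; and since we are implicitly in the regime $(p_j)_- > 1$ (as in Theorem~\ref{thm:main}, which this corollary is designed to serve), $(2\cpjp)_+ = 2 (p_j)_-' < \infty$. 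Finally, $2\pp, 2\cpjp \in \Pp$ because $2p(x) \geq 1$ (noted in the remark after the proposition) and $2\cpjp(x) \geq 2$.

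Next, I would apply Proposition~\ref{AP-charachterization} to obtain
\[
w^{1/2} \in \A_{2\pp}, \qquad w_j^{-1/2} \in \A_{2\cpjp}, \quad j=1,2.
\]
Then Lemma~\ref{lemma:App-Ainfty} applied to each of these three weights gives
\[
u = w^{\pp} = (w^{1/2})^{2\pp} \in A_\infty, \qquad \sigma_j = w_j^{-\cpjp} = (w_j^{-1/2})^{2\cpjp} \in A_\infty,
\]
which is exactly the conclusion.

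There is no serious obstacle here; the only thing to be careful about is making sure the rescaled exponents $2\pp$ and $2\cpjp$ genuinely fall within the scope of Lemma~\ref{lemma:App-Ainfty}, i.e., that they lie in $\Pp$, are log-H\"older continuous, and have finite essential supremum. All three checks are routine given the standing assumptions that $p_j \in LH$ and $1 < (p_j)_- \leq (p_j)_+ < \infty$.
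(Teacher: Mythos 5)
Your proposal matches the paper's proof exactly: the paper's argument is a one-line appeal to Proposition~\ref{AP-charachterization} followed by Lemma~\ref{lemma:App-Ainfty}, which is precisely your chain of reasoning, and your verification of the hypotheses (rescaling preserves $LH$, $2\pp\in\Pp$ because $p_-\geq 1/2$, etc.) is correct. You are also right to flag the implicit assumption $(p_j)_->1$: without it $(2\cpjp)_+=\infty$ and Lemma~\ref{lemma:App-Ainfty} does not apply to $w_j^{-1/2}$, so the corollary's stated hypotheses should really include $1<(p_j)_-$, as is the case everywhere the corollary is actually used.
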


\begin{proof}
  This follows immediately from Lemma~\ref{lemma:App-Ainfty} and
  Proposition~\ref{AP-charachterization}.
\end{proof}

Our next lemma is a variant of Lemma~\ref{lemma:lemma3.3} for
$\A_\vecpp$ weights.   The proof is adapted from the proof
in~\cite{dcu-f-nPreprint2010}. 

\begin{proposition}\label{q-relation}
Given $\pap,\,\pbp \in \Pp$, suppose $p_j(\cdot)\in LH$, $j=1,2$.
Define $\pp$ by~\eqref{eqn:p-defn} and suppose $p_+<\infty$.   For
every cube $Q$, define $q(Q)$ by

   \begin{equation*}
   \frac{1}{q(Q)}=\frac{1}{(p_{1})_{-}(Q)}+\frac{1}{(p_{2})_{-}(Q)}.
   \end{equation*}
Then, given  $v\in \A_\pp$,  for a.e. $x\in Q$,
\begin{equation}\label{EQ-keyboundednorm}
            \|v^{-1}\chi_Q\|_{\cpp}^{q(Q)-p(x)}\lesssim 1;
\end{equation}               
the implicit constant depends on $\pap,\,\pbp$, $n$ and $v$, but is independent of $Q$.
\end{proposition}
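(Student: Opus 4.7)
The plan is to adapt the proof of Lemma~\ref{lemma:lemma3.3} from \cite{dcu-f-nPreprint2010}. Write $A := \|v^{-1}\chi_Q\|_\cpp$. From $\tfrac{1}{q(Q)} = \tfrac{1}{(p_1)_-(Q)} + \tfrac{1}{(p_2)_-(Q)} \geq \tfrac{1}{p_1(x)} + \tfrac{1}{p_2(x)} = \tfrac{1}{p(x)}$, I deduce that $q(Q) - p(x) \leq 0$, so the bound $A^{q(Q)-p(x)} \leq 1$ is immediate when $A \geq 1$; the main case is $A < 1$. The novelty relative to Lemma~\ref{lemma:lemma3.3} is that the gap $p(x) - q(Q)$ is controlled by the oscillations of $\pap$ and $\pbp$ separately, not only by that of $\pp$, which is why the log-H\"older hypotheses enter on each $p_j$ individually.

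The first step is the pointwise identity
\[ \frac{1}{q(Q)} - \frac{1}{p(x)} = \sum_{j=1}^2 \Big(\frac{1}{(p_j)_-(Q)} - \frac{1}{p_j(x)}\Big) \leq \sum_{j=1}^2 \Big(\frac{1}{(p_j)_-(Q)} - \frac{1}{(p_j)_+(Q)}\Big), \]
which, after multiplication by $p(x)q(Q) \leq p_+^2$, yields $0 \leq p(x) - q(Q) \lesssim \sum_j((p_j)_+(Q)-(p_j)_-(Q))$. The second step is a lower bound on $A$: by H\"older's inequality (Lemma~\ref{lemma:gen-holder}), $|Q| \lesssim \|v\chi_Q\|_\pp A$, so $A \gtrsim |Q|/\|v\chi_Q\|_\pp$. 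Raising to the non-positive power $q(Q)-p(x)$ gives
\[ A^{q(Q)-p(x)} \lesssim \Big(\|v\chi_Q\|_\pp/|Q|\Big)^{p(x)-q(Q)}, \]
which is trivially $\leq 1$ when $\|v\chi_Q\|_\pp \leq |Q|$. Otherwise, taking logarithms reduces the goal to $(p(x)-q(Q))\log(\|v\chi_Q\|_\pp/|Q|) \lesssim 1$.

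The final step is a case analysis on the size and location of $Q$. For small cubes ($\ell(Q) \leq 1$) with $\|v\chi_Q\|_\pp \leq 1$, the $LH_0$ condition on each $p_j$ gives $p(x)-q(Q) \lesssim 1/\log(1/\ell(Q))$, while $\log(\|v\chi_Q\|_\pp/|Q|) \leq -\log|Q| = n\log(1/\ell(Q))$, so the product is bounded. When $\|v\chi_Q\|_\pp > 1$, I set $u := v^\pp$; then $u \in A_\infty$ by Lemma~\ref{lemma:App-Ainfty}, $\|v\chi_Q\|_\pp \approx u(Q)^{1/p_\infty}$ by Lemma~\ref{lemma:p-infty-cond}, and Lemma~\ref{lemma:infty-bound} bounds $u(Q)$ by a polynomial in the size and location of $Q$. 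Combined with the $LH_\infty$ decay of $(p_j)_+(Q)-(p_j)_-(Q)$ for cubes far from the origin, this yields the required product bound. The main obstacle is this last case: weaving together the $LH_0$ bound (effective for small cubes) and the $LH_\infty$ bound (effective for cubes far from the origin) uniformly over all cubes requires a careful sub-splitting into cubes that are small near the origin, small far from the origin, and large, following the technical bookkeeping of \cite{dcu-f-nPreprint2010}.
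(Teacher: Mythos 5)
Your overall plan is sound and follows the same broad strategy as the paper: reduce to the nontrivial case, derive a lower bound on $A=\|v^{-1}\chi_Q\|_{\cpp}$ from H\"older and the $\A_\pp$ condition, and then exploit $LH_0$ (via Diening's condition) for small cubes and $LH_\infty$ for distant cubes, combined by a case split. The initial steps --- the identity for $\tfrac{1}{q(Q)}-\tfrac{1}{p(x)}$, the trivial case $A\geq 1$, the reduction to $(p(x)-q(Q))\log(\|v\chi_Q\|_\pp/|Q|)\lesssim 1$, and the small-cube estimate --- are all correct. However, you have repackaged the argument by passing to the modular $u(\cdot)=v(\cdot)^\pp$ and invoking Lemmas~\ref{lemma:App-Ainfty}, \ref{lemma:p-infty-cond}, and~\ref{lemma:infty-bound}, whereas the paper's own proof stays entirely in the norm picture: it lower-bounds $A$ by comparing $Q$ to a fixed reference cube $Q_0$ (or an auxiliary cube $\hat Q$ in the far case), applies H\"older on $Q$ together with the $\A_\pp$ condition on the enlarged cube $5Q_0$ (respectively $\hat Q$), and never uses $A_\infty$ machinery. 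That version is leaner and avoids the following pitfall.

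The concrete gap is in your treatment of large cubes near the origin. For such a cube $Q$ (say $Q\supset Q_0$ with $|Q|$ large) the oscillation $(p_j)_+(Q)-(p_j)_-(Q)$ does \emph{not} decay --- neither $LH_0$ nor $LH_\infty$ helps, since $Q$ is neither small nor far from the origin --- so you cannot absorb a growing $\log(\|v\chi_Q\|_\pp/|Q|)$. And the bound you propose for $\|v\chi_Q\|_\pp$ is too weak there: Lemma~\ref{lemma:infty-bound} gives only $u(Q)\lesssim \sup_{x\in Q}(e+|x|)^{tnp_-}\lesssim (e+\ell(Q))^{tnp_-}$, so with $\|v\chi_Q\|_\pp\approx u(Q)^{1/p_\infty}$ you get
\[
\frac{\|v\chi_Q\|_\pp}{|Q|}\lesssim (e+\ell(Q))^{\,n\left(\tfrac{tp_-}{p_\infty}-1\right)},
\]
which is unbounded whenever $tp_->p_\infty$ (entirely possible, since $t>1$ is only given to exist). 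Hence the product $(p(x)-q(Q))\log(\|v\chi_Q\|_\pp/|Q|)$ need not be controlled. What actually closes this case is \emph{not} the polynomial decay lemma but the $A_\infty$ doubling half of Lemma~\ref{lemma:p-infty-cond}, namely $|E|/|Q|\lesssim (u(E)/u(Q))^{1/p_\infty}$; applying it with $E=Q_0$ a fixed unit cube gives $u(Q)\lesssim u(Q_0)|Q|^{p_\infty}$ and hence $\|v\chi_Q\|_\pp/|Q|\lesssim 1$, making the logarithm nonpositive and the case trivial. (Equivalently, and as in the paper, apply H\"older on $Q$ and $\A_\pp$ on $5Q$ to get $\|v^{-1}\chi_Q\|_\cpp\gtrsim \|v^{-1}\chi_{5Q}\|_\cpp\geq \|v^{-1}\chi_{Q_0}\|_\cpp$, a fixed positive constant, so $A^{q(Q)-p(x)}$ is bounded directly since $q(Q)-p(x)\in [q_--p_+,0]$.) You cite Lemma~\ref{lemma:p-infty-cond}, but only for the equivalence $\|v\chi_Q\|_\pp\approx u(Q)^{1/p_\infty}$; the doubling estimate is the ingredient you actually need here, and substituting Lemma~\ref{lemma:infty-bound} for it does not work.
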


\begin{remark}
  When we apply Proposition~\ref{q-relation} below, we will do so in
  conjunction with Proposition~\ref{AP-charachterization} to
  $w^{\frac{1}{2}}\in \A_{2\pp}$, so we will let
  $v^{-1}=w^{-\frac{1}{2}}$ and replace $\pp$ by $2\pp$ and $q$ by
  $2q$.  We could have stated this result in these terms, but for the
  purposes of the proof, it seemed easier to suppress the factor of 2.
\end{remark}

\begin{proof}
Fix a cube $Q \subset \subRn$.   It follows from the definition that for a.e. $x\in Q$,
 $q(Q)\leq p(x)\leq p_+$, so if
$\|v^{-1}\chi_Q\|_\cpp>1$, \eqref{EQ-keyboundednorm} holds
immediately.  Therefore, we may assume without loss of generality that
$\|v^{-1}\chi_Q\|_\cpp\leq 1$. 

Let $Q_{0}$ be the cube centered at the origin with $|Q_0|=1$.  Then
either $|Q|\leq |Q_0|$ or $|Q|>|Q_0|$.  We will
prove~\eqref{EQ-keyboundednorm} in the first case; the proof of the
second case is the same, exchanging the roles of $Q$ and $Q_0$. Let
$\ell(Q_{0})$ be the side-length
of the cube $Q_0$ and suppose  that  $\dist(Q,Q_{0})\leqslant \ell(Q_{0})$; then
$Q \subset 5Q_{0}$.   Define
\begin{equation} \label{eqn:q-minus}
 q_- = \inf_Q q(Q)\leq \sup_Q q(Q) \leq p_+<\infty.  
\end{equation}
Then
\[  \frac{1}{q(Q)}-\frac{1}{p(x)}
\leq
\left(\frac{1}{(p_{1})_{-}(Q)}-\frac{1}{(p_{1})_{+}(Q)}\right)
+\left(\frac{1}{(p_{2})_{-}(Q)}-\frac{1}{(p_{2})_{+}(Q)}\right), \]
so there exists a constant $C=C(\pap,\pbp)$ such that
\begin{equation}\label{EQ-logcontinuityof-q}
     p(x)-q(Q)\leq C\left[(p_{1})_{+}(Q)-(p_{1})_{-}(Q)\right]+C\left[(p_{2})_{+}(Q)-(p_{2})_{-}(Q)\right].
\end{equation}

Therefore, by Lemma~\ref{lemma:gen-holder} and the
$\mathcal{A}_{\cpp}$ condition we have that
\begin{multline} \label{eqn:small}
\abs{Q} = \int_Q v^{-1}v\,dx
\lesssim  \| v^{-1}\chi_Q\|_{\cpp}\| v \chi_Q\|_{\pp}\\
       \leq 5^{n}\| v^{-1}\chi_Q\|_{\cpp}\abs{5Q_{0}}^{-1}\|v\chi_{5Q_{0}}\|_{\pp}
        \lesssim \|v^{-1}\chi_Q
        \|_{\cpp}\|v^{-1}\chi_{5Q_{0}}\|_{\cpp}^{-1}.  
\end{multline}
Hence, by \eqref{EQ-logcontinuityof-q} and Lemma~\ref{lemma:diening},
 \begin{multline*}
\|v^{-1}\chi_Q\|_{\cpp}^{q(Q)-p(x)}
\lesssim \|v^{-1}\chi_{5Q_{0}}\|_{\cpp}^{q(Q)-p(x)}\abs{Q}^{q(Q)-p(x)}\\
 \leq \left(1+\|v^{-1}\chi_{5Q_{0}}\|_{\cpp}^{-1}\right)^{p_{+}-q_{-}}\abs{Q}^{q(Q)-p(x)}
\lesssim 1.
 \end{multline*}
 
\medskip

Now assume that $\dist(Q,Q_{0})\geqslant \ell(Q_{0})$.  Then there exists a cube
$\hat{Q}$ such that $Q,\, Q_0 \subset \hat{Q}$ and
$\ell(\hat{Q})\approx \dist(Q,Q_{0})\approx
\dist(Q,0)=d_{Q}$. Therefore, arguing as we did in
inequality~\eqref{eqn:small}, replacing $5Q_0$ by $\hat{Q}$, we get
   \begin{equation*}
   \abs{Q} \lesssim\vert\hat{Q}\vert\|v^{-1}\chi_Q\|_{\cpp}\|v^{-1}\chi_{\hat{Q}}\|_{\cpp}^{-1}.
   \end{equation*}
If we continue the above argument and use the fact that 
$\|v^{-1}\chi_{Q_0}\|_{\cpp} \leq \|v^{-1}\chi_{\hat{Q}}\|_{\cpp}$, we get
\[\|v^{-1}\chi_Q \|_{\cpp}^{q(Q)-p(x)}\lesssim
  \vert\hat{Q}\vert^{p(x)-q(Q)}. \]

To estimate this final term, note that since $p_j(\cdot)\in LH$, there
exist $x_{1}, x_{2} \in \overline{Q}$ such that $(p_{1})_{-}(Q)=p_{1}(x_{1})$
and $(p_{2})_{-}(Q)=p_{2}(x_{2})$.   Moreover,
$\abs{x_{1}},\abs{x_{2}}\approx d_{Q}$.  Therefore, again by
log-H\"older continuity, and using that 
$\frac{1}{p_\infty}=\frac{1}{(p_1)_\infty}+\frac{1}{(p_2)_\infty}$,
\[ 
\abs{\frac{1}{q(Q)}-\frac{1}{p_{\infty}}}
\leq
\left|\frac{1}{p_1(x_1)}-\frac{1}{(p_1)_\infty}\right|
+
\left|\frac{1}{p_2(x_2)}-\frac{1}{(p_2)_\infty}\right|
\lesssim 
\frac{1}{\log(e+d_{Q})}. \] 
Therefore, for $x\in Q$, since $\abs{x}\approx d_{Q}$,
  \[        \abs{\frac{1}{q(Q)}-\frac{1}{p(x)}}
\leq \abs{\frac{1}{q(Q)}-\frac{1}{p_{\infty}}}+\abs{\frac{1}{p_{\infty}}-\frac{1}{p(x)}}
\lesssim\frac{1}{\log(e+d_{Q})},
\]
 Given this, and since  $\vert\hat{Q}\vert \lesssim (e+d_{Q})^{n}$, we
 thus have that
 \begin{equation*}
 \vert\hat{Q}\vert^{p(x)-q(Q)}\lesssim 1, 
 \end{equation*}
and so  $\| v^{-1}\chi_Q\|_{\cpp}^{q(Q)-p(x)}\lesssim 1$.
  \end{proof}

\section{Characterization of $\A_\vecpp$}
\label{section:Ap-char}

In this section we give two characterizations of the $\A_\vecpp$
condition in terms of averaging operators.  The first is a very
general condition that does not require assuming that the exponent
functions are log-H\"older continuous; the second requires the
additional assumption that $\pap,\,\pbp$ are log-H\"older continuous.

Given $Q$ a cube, define the multilinear averaging operator
${A}_{Q}$ by
\begin{equation*}
  A_{Q}(f_1,f_2)(x):=\avfa_{Q}\avfb_{Q}\chi_Q (x).
\end{equation*} 
More generally, given a family $\Qq=\{Q\}$ of disjoint cubes, we define 
\[ T_\Qq(f_1,f_2)(x) = \sum_{Q\in \Qq} A_Q(f_1,f_2)(x)\chi_Q(x).  \]

\begin{theorem} \label{thm:vecAp-char}
Given $\pap,\,\pbp \in \Pp$ and $\vec{w}$, then $\vec{w}\in \A_\vecpp$ if and only if
\begin{equation}\label{AQequation}
\sup_{Q}\|{A}_{Q}(f_1,f_2)w\|_{\pp}
\lesssim \|f_1w_{1}\|_{\pap}\|f_2w_{2}\|_{\pbp},
\end{equation}
where the supremum is taken over all cubes $Q$.
If we assume further that $\pap,\,\pbp \in LH$, then $\vec{w}\in
\A_\vecpp$ if and only if 
\begin{equation}\label{TQequation}
\sup_{\Qq}\|{T}_{\Qq}(f_1,f_2)w\|_{\pp}
\lesssim \|f_1w_{1}\|_{\pap}\|f_2w_{2}\|_{\pbp},
\end{equation}
where the supremum is taken over all collections $\Qq$ of disjoint
cubes. 
\end{theorem}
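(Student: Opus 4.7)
My plan is to prove both characterizations by combining variable-exponent Hölder's inequality (Lemma~\ref{lemma:holder}) with duality (Lemma~\ref{lemma:dual}), and for the disjoint-family statement to add a discrete summation estimate that requires the log-Hölder hypothesis on $\pap,\pbp$.

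For the single-cube equivalence \eqref{AQequation}, first assume $\vec{w}\in\A_\vecpp$. Since $A_Q(f_1,f_2)$ is the constant $\avfa_Q\avfb_Q$ on $Q$ and vanishes off $Q$,
\[ \|A_Q(f_1,f_2)w\|_\pp = \avfa_Q\avfb_Q\|w\chi_Q\|_\pp. \]
Applying Lemma~\ref{lemma:holder} to the factorization $f_j=(f_jw_j)\cdot w_j^{-1}$ gives $\langle f_j\rangle_Q\leq|Q|^{-1}\|f_jw_j\chi_Q\|_\pjp\|w_j^{-1}\chi_Q\|_\cpjp$, and substituting these estimates and invoking the $\A_\vecpp$ condition produces \eqref{AQequation}. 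For the converse, fix $Q$ and use the duality formula $\|w_j^{-1}\chi_Q\|_\cpjp\approx\sup_{\|g_j\|_\pjp\leq 1}\int_Q g_jw_j^{-1}\,dx$ from Lemma~\ref{lemma:dual}; the test functions $f_j=g_jw_j^{-1}$ satisfy $\|f_jw_j\|_\pjp\leq 1$ and $\langle f_j\rangle_Q=|Q|^{-1}\int_Q g_jw_j^{-1}\,dx$, so substituting into \eqref{AQequation} and taking the supremum over $g_1,g_2$ extracts the $\A_\vecpp$ constant.

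For the disjoint-family version \eqref{TQequation}, the implication $\eqref{TQequation}\Rightarrow\vec{w}\in\A_\vecpp$ is immediate: specialize to $\Qq=\{Q\}$, for which $T_\Qq(f_1,f_2)=A_Q(f_1,f_2)$, and apply the previous step. For the converse I would dualize via Lemma~\ref{lemma:dual} to obtain
\[ \|T_\Qq(f_1,f_2)w\|_\pp \lesssim \sup_{\|h\|_\cpp\leq 1}\sum_{Q\in\Qq}\avfa_Q\avfb_Q\int_Q wh\,dx, \]
then on each cube apply Hölder three times (once to each of $\int_Q f_1$, $\int_Q f_2$, and $\int_Q h\cdot w_1w_2\,dx$) followed by the $\A_\vecpp$ condition to bound the summand by $[\vec{w}]_{\A_\vecpp}\|f_1w_1\chi_Q\|_\pap\|f_2w_2\chi_Q\|_\pbp\|h\chi_Q\|_\cpp$. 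The proof then reduces to the discrete estimate
\[ \sum_{Q\in\Qq}\|f_1w_1\chi_Q\|_\pap\|f_2w_2\chi_Q\|_\pbp\|h\chi_Q\|_\cpp \lesssim \|f_1w_1\|_\pap\|f_2w_2\|_\pbp\|h\|_\cpp. \]

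This last inequality is the main obstacle, and it is where the $LH$ hypothesis is essential (the regularity transfers from $\pap,\pbp$ to $\pp$ and $\cpp$ via \eqref{eqn:p-defn}, using that $p_\pm\geq\tfrac12$). In the constant-exponent case the three exponents are Hölder conjugate constants and the estimate follows in one line from discrete Hölder together with the identity $\sum_{Q\in\Qq}\rho_{\pjp}(f_jw_j\chi_Q)\leq\rho_{\pjp}(f_jw_j)$ afforded by disjointness. In the variable case there is no single global exponent to which each $\|f_jw_j\chi_Q\|_\pjp$ can be uniformly raised, so the argument must pass between local norms and modulars on each $Q$ via Lemma~\ref{lemma:mod-norm}, sum the modulars using disjointness of $\Qq$, and invoke Diening's condition (Lemma~\ref{lemma:diening}) to absorb discrepancies between $p_\pm(Q)$ and the global exponents. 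Everything outside this summation step is a clean duality-plus-Hölder calculation.
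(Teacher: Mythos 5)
Your argument for the single-cube characterization~\eqref{AQequation} is essentially the paper's proof: sufficiency is Lemma~\ref{lemma:gen-holder} applied to $f_j=(f_jw_j)\cdot w_j^{-1}$ followed by the $\A_\vecpp$ condition, and necessity is duality (Lemma~\ref{lemma:dual}) applied to $\|w_j^{-1}\chi_Q\|_\cpjp$.

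For the $T_\Qq$ direction, however, there is a genuine gap in your first step. You invoke Lemma~\ref{lemma:dual} to write $\|T_\Qq(f_1,f_2)w\|_\pp$ as a supremum of pairings against $h$ with $\|h\|_\cpp\leq 1$. But that lemma requires $\pp\in\Pp$ (i.e.\ $p(\cdot)\geq 1$), whereas the hypothesis here is only $\pap,\pbp\in\Pp$, so that \eqref{eqn:p-defn} gives $p_-\geq\tfrac12$ but allows $p_-<1$. In that range $L^\pp$ is merely quasi-Banach and the duality formula fails. The paper avoids this by rescaling: via Lemma~\ref{lemma:rescale} it writes
\[
\|T_\Qq(f_1,f_2)w\|_\pp^{1/2}=\bigl\|T_\Qq(f_1,f_2)^{1/2}w^{1/2}\bigr\|_{2\pp},
\]
and since $2p_-\geq 1$ it can legitimately dualize against $(2p)'(\cdot)$. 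The cube-by-cube Hölder estimates, the invocation of Proposition~\ref{AP-charachterization}, and the summation lemma are then all carried out at the level of the exponents $2\pap,\,2\pbp,\,(2p)'(\cdot)$. Your argument as written only covers the case $p_-\geq 1$.

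You are right that everything reduces to the discrete summation estimate, but your sketch of how to prove it is both underspecified and not what the paper does. Converting each local norm $\|f_jw_j\chi_Q\|_\pjp$ to a modular via Lemma~\ref{lemma:mod-norm} introduces exponents $(p_j)_\pm(Q)$ that vary from cube to cube, so there is no fixed conjugate triple on which to run discrete H\"older, and Diening's condition (Lemma~\ref{lemma:diening}) alone does not resolve this. The paper instead proves the summation property (Lemma~\ref{Gcondition}) by reducing it, through the bilinear $\pp$-averaging operator $\vec{T}_{\vecpp,\Qq}$ and Lemma~\ref{Bound-of-Tp}, to the known bound~\eqref{eqn:T-norm} for the linear averaging operator $T_{\pp,\Qq}$ from \cite[Corollary~7.3.21]{diening-harjulehto-hasto-ruzicka2010}, together with Lemma~\ref{lemma:harmonic}. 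The log-H\"older hypothesis enters through those results, not through a direct modular argument. Your diagnosis of where the difficulty lies is correct, but the route you propose would need substantial additional ideas to close.
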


\begin{remark}
When $p_-\geq 1$ (i.e., when $\Lp$ is a Banach space) the
characterization in terms of the operators $T_\Qq$ is a consequence of
a general result in the setting of Banach lattices due to
Kokilashvili, {\em et al.}~\cite{Kokilashvili:2015gw}.  However, even
in this special case we would be required to show that condition
$\vec{G}$ defined below holds in order to apply their result.  In our
case we can use the rescaling properties of variable Lebesgue spaces
to prove it directly.
\end{remark}

\begin{remark}
A very deep result in the theory of variable Lebesgue spaces is that
the uniform boundedness of the linear version of the averaging operators
$T_\Qq$ is equivalent to the boundedness of the Hardy-Littlewood
maximal operator, but the uniform boundedness of the (linear)
operators $A_Q$ is not.
See ~\cite[Section 4.4]{cruz-fiorenza-book},~\cite{Diening2005} and~\cite[Section 5.2]{diening-harjulehto-hasto-ruzicka2010} for
details and further references.  We conjecture that the corresponding
result holds in the bilinear case.
\end{remark}

The proof of Theorem~\ref{thm:vecAp-char} is straightforward for
$A_Q$, and so we give this proof separately.

\begin{proof}[Proof of Theorem~\ref{thm:vecAp-char} for $A_Q$]
Let be $w \in \mathcal{A}_{\vec{p}(\cdot)}$; then given any cube $Q$,
by Lemma~\ref{lemma:gen-holder} and the definition of
$\mathcal{A}_{\vec{p}(\cdot)}$  we get 
\begin{align*}
  \|A_{Q}(f_1,f_2)w\|_{\pp} 
&= |Q|^{-2}\int_{Q}{|f_1|w_{1}w_{1}^{-1}\,dy}
\int_{Q}{|f_2|w_{2}w_{2}^{-1}\,dy} \,\|w\chi_Q\|_{\pp}\\
&\lesssim |Q|^{-2}\|w_{1}^{-1}\chi_Q\|_{\cpap}
\| w_{2}^{-1}\chi_Q\|_{\cpbp}
\|w\chi_Q\|_{\pp}\|f_1w_{1}\|_{\pap}\|f_2w_{2}\|_{\pbp}\\
  & \lesssim \|f_1w_{1}\|_{\pap}\|f_2w_{2}\|_{\pbp}.
\end{align*}
Since the implicit constant depends only on the $\A_\vecpp$ condition
and is independent of $Q$, we get~\eqref{AQequation}.

\medskip
  
Now assume that \eqref{AQequation} holds. By Lemma~\ref{lemma:dual},
there exist $h_{j}w_{j}\in L^{\pjp}$, 
$\|h_{j}w_{j}\|_{\pjp}\leq 1$, $j=1,2$, such that
 \begin{align*}
\|w\chi_Q\|_{\pp}
\prod_{j=1}^{2}\| w_{j}^{-1}\chi_{Q}\|_{\cpjp} 
& \lesssim 
\|w\chi_Q\|_{\pp}\int_Q h_{1}\,dy\int_Q h_{2}\,dy\\
& = \|w\chi_Q\|_{\pp} 
\left\langle h_{1}\right\rangle_{Q}\left\langle h_{2}\right\rangle_{Q}
|Q|^{2} \\
& = \|A_{Q}(h_{1},h_{2})w\|_{\pp}|Q|^{2} \\
&  \lesssim \|h_{1}w_{1}\|_{\pap}\|h_{2}w_{2}\|_{\pbp}|Q|^2\\
&  \lesssim |Q|^{2}.   
 \end{align*}
Again, the constant is independent of $Q$, so $\vec{w}\in \A_\vecpp$.
\end{proof}

 \medskip

 The proof of Theorem~\ref{thm:vecAp-char} for $T_\Qq$ requires two
 ancillary tools.  The first is a bilinear averaging operator that
 generalizes a linear operator introduced
 in~\cite{diening-harjulehto-hasto-ruzicka2010}.   Given $\pp\in \Pp$, 
define the $\pp$-average
\begin{equation*}
 \langle h \rangle_{\pp,Q}:= \frac{\|h\chi_Q \|_{\pp}}{\|\chi_Q \|_{\pp}},
\end{equation*}
and given a disjoint family of cubes $\mathcal{Q}$ define the $\pp$-averaging operator
\[ T_{\pp,\Qq}f(x) = \sum_{Q\in \Qq} \langle h \rangle_{\pp,Q}\cdot
  \chi_Q(x). \]
In \cite[Corollary~7.3.21]{diening-harjulehto-hasto-ruzicka2010} they
showed that if $\pp \in LH$, then
\begin{equation} \label{eqn:T-norm}
\|T_{\pp,\Qq}f \|_\pp \lesssim \|f\|_\pp.
\end{equation}

\smallskip

We define the bilinear
$\pp$-average operator analogously:  given $\pap,\,\pbp$
and a family of disjoint cubes $\Qq$, let 
    \begin{equation*}
      \vec{T}_{\vecpp,\mathcal{Q}}(f_1,f_2)(x)
= \sum_{Q \in \mathcal{Q}}\frac{\|f_1\chi_Q \|_{\pap}
\|g_1\chi_Q\|_{\pbp}}{\|\chi_Q \|_{\pp}}\cdot\chi_Q (x).
    \end{equation*}

\begin{lemma}\label{Bound-of-Tp}
Given $\pap,\,\pbp \in \Pp$, suppose $p_j(\cdot)\in LH$, $j=1,2$.
Then 
    \begin{equation*}
      \sup_\Qq\|\vec{T}_{\pp,\mathcal{Q}}(f_1,f_2)\|_{\pp}
\lesssim \|f_1\|_{\pap}\|f_2\|_{\pbp},
    \end{equation*}
where the supremum is taken over all collections $\Qq$ of disjoint
cubes.
\end{lemma}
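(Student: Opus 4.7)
The plan is to reduce the bilinear estimate to a pointwise product of two linear $\pp$-averaging operators and then apply the linear bound \eqref{eqn:T-norm} together with H\"older's inequality.

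First I would verify that $\pp$ defined by~\eqref{eqn:p-defn} inherits log-H\"older continuity from $\pap$ and $\pbp$: the triangle inequality gives
\[\Big|\frac{1}{p(x)}-\frac{1}{p(y)}\Big| \leq \sum_{j=1}^2 \Big|\frac{1}{p_j(x)}-\frac{1}{p_j(y)}\Big|,\]
and if we set $1/p_\infty := 1/(p_1)_\infty + 1/(p_2)_\infty$, the analogous estimate at infinity holds, so $\pp \in LH$. In particular, Lemma~\ref{lemma:harmonic} is available for each of the three exponents.

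The key algebraic fact is that harmonic means over $Q$ are additive with respect to reciprocals: integrating~\eqref{eqn:p-defn} over $Q$ yields $1/p_Q = 1/(p_1)_Q + 1/(p_2)_Q$. Combining this with Lemma~\ref{lemma:harmonic} gives the factorization
\[\|\chi_Q\|_\pp \approx |Q|^{1/p_Q} = |Q|^{1/(p_1)_Q}\,|Q|^{1/(p_2)_Q} \approx \|\chi_Q\|_\pap\,\|\chi_Q\|_\pbp,\]
with implicit constants independent of $Q$. Consequently, every summand in $\vec{T}_{\vecpp,\Qq}(f_1,f_2)$ reduces to a product of ordinary $p_j(\cdot)$-averages,
\[\frac{\|f_1\chi_Q\|_\pap \|f_2\chi_Q\|_\pbp}{\|\chi_Q\|_\pp} \approx \langle f_1\rangle_{\pap,Q}\,\langle f_2\rangle_{\pbp,Q}.\]

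Since the cubes in $\Qq$ are pairwise disjoint, for each $x$ at most one summand is nonzero, so pointwise
\[\vec{T}_{\vecpp,\Qq}(f_1,f_2)(x) \approx T_{\pap,\Qq}f_1(x)\cdot T_{\pbp,\Qq}f_2(x).\]
Taking the $\pp$-norm, applying the generalized H\"older inequality (Lemma~\ref{lemma:holder}), and then the linear estimate~\eqref{eqn:T-norm} to each factor yields
\[\|\vec{T}_{\vecpp,\Qq}(f_1,f_2)\|_\pp \lesssim \|T_{\pap,\Qq}f_1\|_\pap \|T_{\pbp,\Qq}f_2\|_\pbp \lesssim \|f_1\|_\pap\|f_2\|_\pbp,\]
with constants independent of $\Qq$, which is the desired bound.

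There is no really serious obstacle here; the argument is a clean bilinearization of the linear setup. The only subtlety worth flagging is that $\pp$ lies in $\Pp_0$ with $p_- \geq 1/2$ rather than in $\Pp$, so one must use the form of Lemma~\ref{lemma:harmonic} valid for log-H\"older exponents in $\Pp_0$ (the same proof, via Lemmas~\ref{lemma:mod-norm} and~\ref{lemma:diening}, works). Once that is acknowledged, the proof is essentially a two-line reduction.
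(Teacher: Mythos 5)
Your proposal is correct and follows essentially the same path as the paper's proof: factor $\|\chi_Q\|_\pp \approx \|\chi_Q\|_\pap\|\chi_Q\|_\pbp$ via Lemma~\ref{lemma:harmonic}, reduce each summand to a product of $p_j(\cdot)$-averages, use disjointness to pass to the pointwise product $T_{\pap,\Qq}f_1 \cdot T_{\pbp,\Qq}f_2$, and conclude via H\"older and~\eqref{eqn:T-norm}. The only difference is that you explicitly flag the verification that $\pp \in LH$ and the $\Pp_0$ subtlety in applying Lemma~\ref{lemma:harmonic}, which the paper states without comment; these remarks are accurate and harmless.
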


\begin{proof}
Since $\pap,\,\pbp \in LH$,  $\pp \in LH$, and so by
Lemma~\ref{lemma:harmonic}, 
\[ \|\chi_Q\|_\pp \approx |Q|^{\frac{1}{p_Q}}
= |Q|^{\frac{1}{(p_1)_Q}}|Q|^{\frac{1}{(p_2)_Q}}
\approx \|\chi_Q\|_\pap \|\chi_Q\|_\pbp. \]
Therefore,
\[ \vec{T}_{\pp,\mathcal{Q}}(f_1,f_2)(x)
\approx \sum_{Q\in \mathcal{Q}} \langle f_1 \rangle_{\pap,Q}
\langle f_2 \rangle_{\pbp,Q}\cdot\chi_Q, \]
and so by Lemma~\ref{lemma:gen-holder} and~\eqref{eqn:T-norm},
  \begin{align*}
 \|\vec{T}_{\pp,\mathcal{Q}}(f_1,f_2)\|_{\pp}
& \lesssim \bigg\|\sum_{Q \in \mathcal{Q}}
\avpafa\avpbfb\cdot \chi_Q\bigg\|_{\pp} \\
  &    \lesssim \bigg\|\sum_{Q \in \mathcal{Q}}
\avpafa\cdot \chi_Q\sum_{Q \in \mathcal{Q}}
\avpbfb\cdot\chi_Q\bigg\|_{\pp} \\
& \lesssim   \|T_{\pap,\Qq}f_1\|_{\pap}\|T_{\pbp,\Qq}f_2\|_{\pbp} \\
& \lesssim \|f_1\|_{\pap}\|f_2\|_{\pbp}.
  \end{align*}
\end{proof}

The second tool is a summation property.  Given $\pap,\,\pbp\in \Pp$
suppose $\pp$ is 
such that $p_-\geq 1$.  Then
we say that $\vecpp \in \vec{G}$ if for every family of disjoint cubes
$\Qq$, 
\begin{equation*}
\sum_{Q \in \mathcal{Q}} \|f_1\chi_Q\|_{\pap}\|f_2\chi_Q\|_{\pbp}
\|h\chi_Q\|_{\cpp}
\lesssim \|f_1\|_{\pap}\|f_2\|_{\pbp}\|h\|_{\cpp},
\end{equation*}
where the implicit constant is independent of $\Qq$.

\begin{remark}
The linear version of property $G$ is due to Berezhno{\u\i}~\cite{MR1622773} in the setting
of Banach function spaces.  See
also~\cite{diening-harjulehto-hasto-ruzicka2010} where it is used to
prove~\eqref{eqn:T-norm}. 
\end{remark}

\begin{lemma}\label{Gcondition}
Given $\pap,\,\pbp\in \Pp$,  suppose $p_j(\cdot)\in LH$, $j=1,2$.  Then
$\vecpp \in \vec{G}$.
\end{lemma}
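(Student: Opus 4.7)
The plan is to reduce the bilinear $\vec{G}$ property to the (known) linear $G$ property via the bilinear averaging operator $\vec{T}_{\vecpp,\Qq}$ introduced above. First I would observe that, since $\pap,\pbp\in LH$ and $\frac{1}{p(x)}=\frac{1}{p_1(x)}+\frac{1}{p_2(x)}$, the log-Hölder moduli of $\pap$ and $\pbp$ add to give $\pp\in LH_0$, and similarly $\pp\in LH_\infty$ with $\frac{1}{p_\infty}=\frac{1}{(p_1)_\infty}+\frac{1}{(p_2)_\infty}$; thus the linear theory at the exponent $\pp$ is available. (We also tacitly assume $p_-\geq 1$, as this is built into the definition of $\vec{G}$ and is needed to make $\cpp$ meaningful.)

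The central observation is the following linearization. Fix any disjoint family $\Qq$ and set $g := \vec{T}_{\vecpp,\Qq}(f_1,f_2)$. Since the cubes in $\Qq$ are pairwise disjoint, at any $x\in Q\in\Qq$ only one term of the defining sum is nonzero, so $g(x)=c_Q$ on $Q$ and $g(x)=0$ off $\bigcup\Qq$, where
\[ c_Q = \frac{\|f_1\chi_Q\|_\pap\,\|f_2\chi_Q\|_\pbp}{\|\chi_Q\|_\pp}. \]
Consequently
\[ \|g\chi_Q\|_\pp = c_Q\,\|\chi_Q\|_\pp = \|f_1\chi_Q\|_\pap\,\|f_2\chi_Q\|_\pbp, \]
converting the bilinear product of norms appearing on the left-hand side of the $\vec{G}$ inequality into a single $\pp$-norm of the auxiliary function $g$.

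Given this identity, I would finish by invoking two already-available results. First, Berezhno\u\i's linear $G$-property (\cite{MR1622773}; see also the proof of~\eqref{eqn:T-norm} in~\cite{diening-harjulehto-hasto-ruzicka2010}), applied at $\pp\in LH$, yields
\[ \sum_{Q\in\Qq} \|g\chi_Q\|_\pp\,\|h\chi_Q\|_\cpp \lesssim \|g\|_\pp\,\|h\|_\cpp. \]
Second, Lemma~\ref{Bound-of-Tp} gives $\|g\|_\pp=\|\vec{T}_{\vecpp,\Qq}(f_1,f_2)\|_\pp \lesssim \|f_1\|_\pap\,\|f_2\|_\pbp$. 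Substituting the first identity of the previous paragraph and stacking these two estimates yields
\[ \sum_{Q\in\Qq} \|f_1\chi_Q\|_\pap\,\|f_2\chi_Q\|_\pbp\,\|h\chi_Q\|_\cpp \lesssim \|f_1\|_\pap\,\|f_2\|_\pbp\,\|h\|_\cpp, \]
as desired.

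The main obstacle is conceptual rather than computational: one must recognize that $\vec{T}_{\vecpp,\Qq}$ is the correct linearization that collapses the two-factor product into a single $\pp$-norm. Once this is in hand, the argument is essentially a two-line combination of Lemma~\ref{Bound-of-Tp} with the linear $G$-property; the only genuine analytic input at the new exponent $\pp$ is hidden in the latter reference and in the inheritance of log-Hölder continuity.
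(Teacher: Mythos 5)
Your proof is correct and takes essentially the same route as the paper: both reduce the sum to the boundedness of $\vec{T}_{\vecpp,\Qq}$ (Lemma~\ref{Bound-of-Tp}) combined with H\"older's inequality and the boundedness of the $\cpp$-averaging operator $T_{\cpp,\Qq}$ from~\eqref{eqn:T-norm}. The only difference is presentational---you cite the linear $G$-property as a black box while the paper inlines its proof (via Lemma~\ref{lemma:harmonic} and the integral $\int \vec{T}_{\vecpp,\Qq}(f_1,f_2)\,T_{\cpp,\Qq}h\,dx$); since your auxiliary function $g=\vec{T}_{\vecpp,\Qq}(f_1,f_2)$ is constant on each cube, $T_{\pp,\Qq}g=g$ and the two arguments coincide step for step.
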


\begin{proof}
Since both $\pp,\,\cpp \in LH$,
by Lemma~\ref{lemma:harmonic}, for any cube $Q$
\[ \|\chi_Q\|_\pp \|\chi_Q\|_\cpp 
\approx |Q|^{\frac{1}{p_Q}}|Q|^{\frac{1}{(p')_Q}}=|Q|. \]
Hence, by Lemma~\ref{lemma:gen-holder},~\eqref{eqn:T-norm} and Lemma~\ref{Bound-of-Tp},
   \begin{align*} 
 &     \sum_{Q \in \mathcal{Q}} \| f_1 \chi_Q\|_{\pap} 
\|f_2\chi_Q \|_{\pbp} \| h \chi_Q\|_{\cpp}\\
&\qquad \qquad \approx \sum_{Q\in \mathcal{Q}} \int_\subRn 
\frac{\| f_1\chi_Q\|_{\pap} \| f_2\chi_Q\|_{\pbp}}{\|\chi_Q \|_{\pp}}
\frac{\| h\chi_Q\|_{\cpp}}{\|\chi_Q\|_{\cpp}}\cdot\chi_Q\,dx\\
&\qquad \qquad \lesssim \int_{\subRn}\sum_{Q\in \mathcal{Q}}
\frac{\| f_1\chi_Q\|_{\pap} \| f_2\chi_Q\|_{\pbp}}{\|\chi_Q \|_{\pp}}\cdot
  \chi_Q 
\sum_{Q\in \mathcal{Q}}
\frac{\| h\chi_Q\|_{\cpp}}{\|\chi_Q\|_{\cpp}}\cdot\chi_Q\,dx\\
 &\qquad \qquad =    
\int_{\subRn}\vec{T}_{\vecpp,\mathcal{Q}}(f_1,f_2)\;T_{\cpp,\mathcal{Q}}h\,dx\\
 &\qquad \qquad \lesssim 
\|\vec{T}_{\vecpp,\mathcal{Q}}(f_1,f_2)\|_{\pp}\|T_{\cpp,\mathcal{Q}}h\|_{\cpp}\\
 &\qquad \qquad \lesssim \|f_1\|_{\pap}\|f_2\|_{\pbp}\|h\|_{\cpp}.  
\end{align*}      
\end{proof}

\begin{proof}[Proof of Theorem~\ref{thm:vecAp-char} for $T_\Qq$]
 We first prove that the $\A_\vecpp$ condition is sufficient.  Since $|T_\Qq(f_1,f_2)(x)|\leq T_\Qq(|f_1|,|f_2|)(x)$, we may assume
  without loss of generality that $f_1,\,f_2$ are non-negative.  Because
  $(p_j)_-\geq 1$, $j=1,2$, we have $2p_-\geq1$, and so $2\pp\in \Pp$.
  Therefore, by Lemmas~\ref{lemma:rescale} and~\ref{lemma:dual}, there
  exists $hw^{-\frac{1}{2}}\in L^{(2p)'(\cdot)}$,
  $\|hw^{-\frac{1}{2}}\|_{(2p)'(\cdot)}\leq 1$, such that
\begin{align*}
\|T_\Qq(f_1,f_2)w\|_\pp^{\frac{1}{2}} 
& = \|T_\Qq(f_1,f_2)^{\frac{1}{2}}w^{\frac{1}{2}} \|_{2\pp} \\
& \approx \int_\subRn T_\Qq(f_1,f_2)^{\frac{1}{2}}
  w^{\frac{1}{2}}hw^{-\frac{1}{2}}\,dx \\
& \leq \sum_{Q\in \Qq} \avfa_Q^{\frac{1}{2}} \avfb_Q^{\frac{1}{2}}  
\int_Q hw^{\frac{1}{2}}w^{-\frac{1}{2}}\,dx \\
& = \sum_{Q\in \Qq} 
\|f_1^{\frac{1}{2}}w_1^{\frac{1}{2}} w_1^{-\frac{1}{2}} \chi_Q\|_2
\|f_2^{\frac{1}{2}}w_2^{\frac{1}{2}} w_2^{-\frac{1}{2}} \chi_Q\|_2
\|hw^{\frac{1}{2}}w^{-\frac{1}{2}}\chi_Q\|_1 |Q|^{-1}; \\
\intertext{by Lemmas~\ref{lemma:gen-holder} and \ref{lemma:holder}, }
& \leq \sum_{Q\in \Qq} \bigg[
\|f_1^{\frac{1}{2}}w_1^{\frac{1}{2}}\chi_Q\|_{2\pap}
  \|w_1^{-\frac{1}{2}} \chi_Q\|_{2p_1'(\cdot)} \\
& \quad \times \|f_2^{\frac{1}{2}}w_2^{\frac{1}{2}}\chi_Q\|_{2\pbp} 
  \|w_2^{-\frac{1}{2}} \chi_Q\|_{2p_2'(\cdot)}
\|hw^{-\frac{1}{2}}\chi_Q\|_{(2p)'(\cdot)}
  \|w^{\frac{1}{2}} \chi_Q\|_{2\pp}|Q|^{-1} \bigg] \\
& \leq \sum_{Q\in \Qq} \bigg[
\|f_1^{\frac{1}{2}}w_1^{\frac{1}{2}}\chi_Q\|_{2\pap}
\|f_2^{\frac{1}{2}}w_2^{\frac{1}{2}}\chi_Q\|_{2\pbp} 
\|hw^{-\frac{1}{2}}\chi_Q\|_{(2p)'(\cdot)} \\
& \quad \times 
\|w_1^{-\frac{1}{2}} \chi_Q\|_{2p_1'(\cdot)} 
\|w_2^{-\frac{1}{2}} \chi_Q\|_{2p_2'(\cdot)}
\|w_1^{\frac{1}{2}}\chi_Q\|_{2\pap}\|w_2^{\frac{1}{2}}\chi_Q\|_{2\pbp}|Q|^{-1}
  \bigg];
\intertext{by Proposition~\ref{AP-charachterization}, Lemma~\ref{Gcondition}
  applied to the exponents $2\pap,\,2\pbp$, and Lemma~\ref{lemma:rescale},}
& \lesssim \sum_{Q\in \Qq} 
\|f_1^{\frac{1}{2}}w_1^{\frac{1}{2}}\chi_Q\|_{2\pap}
\|f_2^{\frac{1}{2}}w_2^{\frac{1}{2}}\chi_Q\|_{2\pbp} 
\|hw^{-\frac{1}{2}}\chi_Q\|_{(2p)'(\cdot)} \\
& \lesssim \|f_1^{\frac{1}{2}}w_1^{\frac{1}{2}}\|_{2\pap}
\|g_1w_2^{\frac{1}{2}}\|_{2\pbp} 
\|hw^{-\frac{1}{2}}\|_{(2p)'(\cdot)} \\
& \leq \|f_1w_1\|_{\pap}^{\frac{1}{2}}
\|g_1w_2\|_{\pbp}^{\frac{1}{2}}.
\end{align*}
Since the implicit constants are independent of our choice of $\Qq$,
we conclude that $\vec{w}\in
\A_\vecpp$ implies \eqref{TQequation}.

The converse, that the $\A_\vecpp$ condition is necessary,  follows from the
corresponding implication for $A_Q$ proved above. 
\end{proof}

\section{Proof of Theorem~\ref{thm:main}}
\label{section:proof-main}

In this section we prove Theorem~\ref{thm:main}.    As before, given
weights $w_1$ and $w_2$ we define $w=w_1w_2$ and let
$\vec{w}=(w_1,w_2, w)$.  Given exponents $\pap,\,\pbp \in \Pp$, we
define $\pp$ by~\eqref{eqn:p-defn} and let $\vecpp=(\pap,\pbp,\pp)$.  

\smallskip

We first prove the
necessity of the $\A_\vecpp$ condition.   This is an immediate
consequence of Theorem~\ref{thm:vecAp-char}.  Given any cube $Q$, we
have that 
\[ |A_Q(f_1,f_2)(x)| \leq \M(f_1,f_2)(x). \]
Therefore, given weights $w_1$, $w_2$ such that
the boundedness condition \eqref{eqn:main1} holds, we immediately have
that~\eqref{AQequation} holds, and so $\vec{w}\in \A_\vecpp$.

\begin{remark}
The proof that the $\A_\vecpp$ condition is necessary does not
require us to assume that the exponents are log-H\"older continuous.
\end{remark}
\medskip

The proof of the sufficiency of the $\A_\vecpp$ condition is
considerably more complicated.   Fix $\pap,\,\pbp \in \Pp$ such that
$(p_j)_->1$ and $p_j(\cdot) \in LH$, $j=1,2$.  Let $w_1,\,w_2$ be such
that $\vec{w} \in \A_\vecpp$.  

We begin with a series of
reductions.  First, for $t\in \{0,1/3\}^n$, define 
\[ \D_t =  \{ 2^{-k}([0,1)^n +j+(-1)^k t) :  k \in \Z, j \in \Z^n \}. \]
Each $\D_t$ is a ``$1/3$'' translate of the standard dyadic grid, and
has exactly the same properties as $\D_0$ defined above.  (Note that
the two definitions agree when $t=0$.)   Define the dyadic bilinear maximal
operator
\[ \M^{\D_t}(f_1,f_2)(x) = \sup_{Q\in \D_t} 
\avgint_Q |f_1(y)|\,dy \avgint_Q |f_2(y)|\,dy \chi_Q(x).  \]
Then we have the following remarkable inequality:  there exists a
constant $C(n)$ such that
\[ \M(f_1,f_2)(x) \leq C(n)\sum_{t\in \{0,1/3\}^n} 
\M^{\D_t}(f_1,f_2)(x). \]
This was first proved in \cite{MR3302105}.  (For the  linear case, see
also~\cite{CruzUribe:2016ji}).

Therefore, to prove that inequality~\eqref{eqn:main1} holds, it
suffices to prove it with $\M$ replaced by $\M^{\D_t}$, and in fact it
suffices to prove it for $\M^d=\M^{\D_0}$, since the same proof holds
for any dyadic grid $\D_t$ with different constants, where the
difference only depends on $t$.  (Below we will describe where this
difference arises.)

Second, we may assume that $f,\,g$ are non-negative, bounded functions
with compact support.  It is clear from the definition of $\M^d$ that we
may take them non-negative.  To show the approximation, it suffices to
note that given $f_1,\,f_2$, there exists a sequence of non-negative, bounded
functions of compact support, $g_k,\,h_k$, that increase pointwise to $f$ and $g$ and such
that 
\[ \lim_{k\rightarrow \infty} \M^d(g_k,h_k)(x)=\M^d(f_1,f_2)(x).  \]
In the linear case this is proved
in~\cite[Lemma~3.30]{cruz-fiorenza-book} and the same proof (with the
obvious changes) works in the bilinear case.   The desired result then
follows by Lemma~\ref{lemma:fatou}.

Third, we restate the desired inequality in an equivalent fashion.
Given an exponent $\pp\in \Pp_0$ and a weight $v$, define $\Lp_{v}$ to
be the quasi-Banach function space with norm
    \begin{equation*}
    \|g\|_{\Lp_{v}}
    := \inf \bigg\{ \lambda>0:
    \int_{\subRn}\left(\frac{\abs{g(x)}}{\lambda}\right)^{p(x)}v(x)\,dx\leq
    1  \bigg\}.
    \end{equation*}
In other words, $\Lp_v$ is defined exactly as $\Lp$ with Lebesgue
measure replaced by the measure $v\,dx$.    This norm has many of the
same basic properties as the $\Lp$ norm.

Let $u(\cdot)=w(\cdot)^\pp$ and
$\sigma_l(\cdot)=w_l(\cdot)^{-p_l'(\cdot)}$, $l=1,2$;  then
\[   (\sigma_{l}(x)w_{l}(x))^{p_{l}(x)}
=(w_{l}(x)^{1-p_{l}^{\prime}(x)})^{p_{l}(x)}
 = w_{l}(x)^{-p_{l}^{\prime}(x)}
 = \sigma_{l}(x).
\]
Therefore,
\begin{equation*}
\|\M^d(f_1\sigma_{1},f_{2}\sigma_{2})\|_{\Lp_{u}}
=\|\M^d(f_{1}\sigma_{1},f_{2}\sigma_{2})w\|_{\Lp},
\end{equation*}      
and for $l=1,2$,
  \begin{equation*}
  \|f_{l}\|_{L_{\sigma_{l}}^{p_{l}(\cdot)}}
= \|f_{l}\sigma_{l}w_{l}\|_{L^{p_{l}(\cdot)}}.  
\end{equation*}     
 Hence,  it will suffice to prove that 
 \begin{equation*}\label{EQ-boundenessofM2}
 \|\M^d(f_{1}\sigma_{1},f_{2}\sigma_{2})\|_{\Lp_{u}}
\lesssim 
\|f_1\|_{\Lpa_{\sigma_{1}}}\|f_{2}\|_{\Lpb_{\sigma_{2}}},
      \end{equation*}
since if we replace $f_{l}$ by $f_{l}/\sigma_{l}$, $l=1,2$, we
get~\eqref{eqn:main1}.

Finally, by homogeneity  we may assume without loss of generality that
$\|f_l\|_{L^{p_l(\cdot)}_{\sigma_{l}}}=1$, $l=1,2$, which by
Lemma~\ref{lemma:mod-norm} (which holds in this setting) implies that
\[ \int_\subRn |f_l(x)|^{p_l(x)}\sigma_l(x)\,dx \leq 1. \]
Thus it will suffice to prove
that
 \begin{equation*}
 \|\M^{d}(f_{1}\sigma_{1},f_{2}\sigma_{2})\|_{\Lp_{u}}\lesssim 1,
        \end{equation*}
which, again by Lemma~\ref{lemma:mod-norm},  is
 equivalent to proving that
 \begin{equation}\label{modular_vertion_Maxboundeness}
      \int_{\subRn}\M^{d}(f_{1}\sigma_{1},f_{2}\sigma_{2})^{p(x)}u(x)\,dx
 \lesssim 1
      \end{equation}             
with a constant independent of $f_l$, $l=1,2$.      
    
\medskip

\smallskip

We now begin our main estimate, which is to prove
that~\eqref{modular_vertion_Maxboundeness} holds.
Define the functions
\begin{align*}
  h_{1}=f_{1}\chi_{\lbrace f_{1}>1\rbrace}, \;
& h_{2}=f_{1}\chi_{\lbrace f_{1}\leq 1 \rbrace}, \\ 
  h_{3}=f_{2}\chi_{\lbrace f_{2}>1\rbrace},\;
& h_{4}=f_{2}\chi_{\lbrace f_{2} \leq 1\rbrace},
\end{align*}
and for brevity define
\[     \rho(1)= 1, \quad \rho(2)= 1, \quad
   \rho(3)= 2, \quad \rho(4)= 2.  \]
Then we can write
  \begin{align*}
\int_\subRn \M^{d}\left( f_{1}\sigma_{1},
    f_{2}\sigma_{2}\right)(x)^{p(x)}u(x)\,dx
 &\leq 
\int_\subRn \M^{d}\left( h_{1}\sigma_{1}, h_{3}\sigma_{2}\right)(x)
   ^{p(x)}u(x)\,dx \\
& \qquad 
+\int_\subRn \M^{d}\left( h_{1}\sigma_{1}, h_{4}\sigma_{2}\right)(x)
   ^{p(x)}u(x)\,dx\\ 
&\qquad + 
\int_\subRn \M^{d}\left( h_{2}\sigma_{1},
  h_{3}\sigma_{2}\right)(x)^{p(x)}u(x)\,dx \\
& \qquad 
+ \int_\subRn \M^{d}\left( h_{2}\sigma_{1}, h_{4}\sigma_{2}\right)(x)
 ^{p(x)}u(x)\,dx\\
   &= I_{1}+I_{2}+I_{3}+I_{4}.
  \end{align*}
We will estimate each term on the right separately.  The integral $I_1$
is the ``local'' term and the estimate will use the $LH_0$ condition.
The integral $I_4$ is the ``global'' term and the estimate will use the
$LH_\infty$ condition.  The estimates  of $I_2$ and $I_3$ involve both
local and global estimates and are the most complicated:  this is where
our proof diverges most significantly from the linear case.  Note,
however, that the estimates for these integrals are the same (making
the obvious changes) so we will only estimate $I_2$. 
  
\subsection*{The estimate for $I_1$:}
We begin by forming the bilinear Calder\'on-Zygmund cubes associated
with $\M^d(h_1\sigma_1,h_3\sigma_2)$.  For the details of this decomposition,
see~\cite{MR2483720}.      Fix    $a> 2^{2n}$ and for each $k\in \Z$ define     
  \begin{equation*}
     \Omega_{k}
= \{ x\in \R^n: \M^{d}(h_{1}\sigma_{1},h_{3}\sigma_{2})(x)> a^{k} \}.
   \end{equation*}
   Then $\Omega_{k}= \bigcup_{j}{Q_{j}^{k}}$ where
   $\lbrace\Q\rbrace_{k,j}$ is a family of maximal dyadic cubes
   contained in $\Omega_k$ with the property that
\[ a^k < \langle h_1\sigma_1\rangle_\Q 
\langle h_3\sigma_2\rangle_\Q \leq a^{k+1}.  \] 
 Moreover, since $\Omega_{k+1}\subset \Omega_{k}$,
   the sets $E_{j}^{k}=\Q \setminus \Omega_{k+1}$ are pairwise
   disjoint and there exists $0<\alpha<1$ such that
       \begin{equation*}\label{EQ-Ejkestimate}
  \alpha |Q_j^k| < |E_j^k|.  
       \end{equation*}
 By Corollary~\ref{cor:Ainfty}, $u$ and $\sigma_l$,
$l=1,2$, are $A_\infty$ weights, so by
Lemma~\ref{lemma:Ainfty-prop} there exists $0<\beta<1$ such that 
\[ \beta u(\Q) \leq u(E_j^k), \qquad \beta \sigma_l(\Q) \leq
  \sigma_l(E_j^k). \]
We will use this fact repeatedly throughout the proof without further
comment.

\smallskip  

We can now estimate $I_1$ as follows:   
 \begin{align*}
I_1 
& =
  \int_{\subRn}\M^{d}(h_{1}\sigma_{1},h_{3}\sigma_{2})(x)^{p(x)}u(x)\,dx \\
&\leq\sum\limits_{k=0}^{\infty}
\int_{\Omega_{k}\setminus\Omega_{k+1}}a^{(k+1)p(x)}u(x)\,dx\\ 
     &\lesssim \sum_{k,j}
\int_{E_{j}^{k}}\prod_{l=1,3}\langle
       h_{l}\sigma_{\rho(l)}\rangle_{\Q}^{p(x)}u(x)\,dx \\
     &= \sum_{k,j}\int_{E_{j}^{k}}\prod_{l=1,3}
\bigg(\int_{\Q}{h_{l}\sigma_{\rho(l)}\,dy}\bigg)^{p(x)}
|\Q|^{-2p(x)}u(x)\,dx.
\end{align*}   
   
 Since $h_1(x)\geqslant 1$ or $h_1(x)=0$, we have that
 \begin{equation}\label{Bound_function_geq1}
 \int_{\Q}h_1(y)\sigma_{1}(y)\,dy
\leq   \int_{\Q}h_{1}(y)^{p_{1}(y)}\sigma_{1}(y)\,dy
\leq \int_\subRn f_1(y)^{p_1(y)}\sigma_1(y)\,dy \leq 1. 
\end{equation}
The same estimate holds for $h_3$.   For each $j,k$ define
 \begin{equation*}
 \frac{1}{q(\Q)}=\frac{1}{(p_{1})_{-}(\Q)}+\frac{1}{(p_{2})_{-}(\Q)},
 \end{equation*}
 and note that  for $x\in \Q$, $q(\Q)\leq p_{-}(\Q)\leq p(x)$. Thus,
   \begin{align*}
     I_{1} 
&\leq  \sum_{k,j}\int_{E_{j}^{k}}
\prod_{l=1,3}\bigg(\int_{\Q}h_{l}(y)\sigma_{\rho(l)}(y)\,dy\bigg)
^{q(\Q)}|\Q|^{-2p(x)}u(x)\,dx\\
& \leq \sum_{k,j}\int_{E_{j}^{k}}
\prod_{l=1,3}\bigg(\frac{1}{\sigma_{\rho(l)}(\Q)}
\int_{\Q}h_{l}(y)^{\frac{p_{l}(y)}{(p_{l})_{-}(\Q)}}\sigma_{\rho(l)}(y)\,dy
\bigg)^{q(\Q)} \\
& \qquad \qquad \times 
\sigma_{\rho(l)}(\Q)^{q(\Q)}|\Q|^{-2p(x)}u(x)\,dx.
   \end{align*}
   
 By Hölder's inequality with measure $\sigma_{l}\,dx$, for $l=1,3$,
  \begin{multline}\label{EQ-estimateHölder}
 \bigg(\frac{1}{\sigma_{\rho(l)}(\Q)}
\int_{\Q}h_{l}(y)^{\frac{p_{l}(y)}{(p_{l})_{-}(\Q)}}
\sigma_{\rho(l)}(y)\,dy\bigg)^{q(\Q)} \\
 \leq \bigg(\frac{1}{\sigma_{\rho(l)}(\Q)}
\int_{\Q}h_{l}(y)^{\frac{p_{l}(y)}{(p_{l})_{-}}}\sigma_{\rho(l)}(y)\,dy
\bigg)^{(p_{l})_{-}\frac{q(\Q)}{(p_{l})_{-}(\Q)}}
= \langle h_{l}^{\frac{p_{l}(\cdot)}{(p_{l})_{-}}}
 \rangle_{\sigma_{\rho(l)},Q}^{(p_{l})_{-}\frac{q(\Q)}{(p_{l})_{-}(\Q)}}.
\end{multline}
Further,  we claim  that
   \begin{equation}\label{EQ-I1estimte}
\int_{E_{j}^{k}}\prod_{l=1,3}\sigma_{\rho(l)}(\Q)^{q(\Q)}
|\Q|^{-2p(x)}u(x)\,dx
\lesssim \sigma_{1}(\Q)^{\frac{q(\Q)}{(p_{1})_{-}(\Q)}}
\sigma_{2}(\Q)^{\frac{q(\Q)}{(p_{2})_{-}(\Q)}}.
   \end{equation}

If we assume this for the moment, then we can argue as follows:
since 
 \begin{equation*}
 1=\frac{q(\Q)}{(p_{1})_{-}(\Q)}+\frac{q(\Q)}{(p_{2})_{-}(\Q)},
 \end{equation*}
by \eqref{EQ-estimateHölder} and Young's inequality,
 \begin{align}
     I_1 
\nonumber &\lesssim \sum_{k,j}\prod_{l=1,3}
\langle h_{l}^{\frac{p_{l}(\cdot)}{(p_{l})_{-}}}
 \rangle_{\sigma_{\rho(l)},Q}^{(p_{l})_{-}\frac{q(\Q)}{(p_{l})_{-}(\Q)}}
\sigma_{\rho(l)}(\Q)^{\frac{q(\Q)}{(p_{l})_{-}(\Q)}}\\
\label{eqn:final-I1-est}     &\leq  \sum_{k,j}\sum_{l=1,3}
\langle h_{l}^{\frac{p_{l}(\cdot)}{(p_{l})_{-}}}
 \rangle_{\sigma_{\rho(l)},Q}^{(p_{l})_{-}}\sigma_{\rho(l)}( Q_{j}^{k})\\
\nonumber  &\lesssim  \sum_{k,j}\sum_{l=1,3}
\langle h_{l}^{\frac{p_{l}(\cdot)}{(p_{l})_{-}}}
 \rangle_{\sigma_{\rho(l)},Q}^{(p_{l})_{-}}\sigma_{\rho(l)}(
             E_{j}^{k}).\\
\intertext{ By 
Lemma~\ref{lemma:wtd-max-bound}, since $(p_l)_->1$,}
\nonumber     &\leq \sum_{l=1,3} \int_{\subRn}
M_{\sigma_{\rho(l)}}^{d}(h_{l}^{\frac{p_{l}(\cdot)}{(p_{l})_{-}}})(x)
^{(p_{l})_{-}}\sigma_{\rho(l)}(x)\,dx\\
\nonumber     & \lesssim \sum_{l=1,3} \int_{\subRn}
h_{l}(x)^{p_{l}(x)}\sigma_{\rho(l)}(x)\,dx\\
\nonumber     & \lesssim 1.
     \end{align}
     
 Therefore, to complete the estimate of $I_1$ we will
 prove~\eqref{EQ-I1estimte}.   First, rewrite the left-hand side as
 follows: 
   \begin{align*}
    & \int_{E_{j}^{k}}\prod_{l=1,3}\sigma_{\rho(l)}(\Q)^{q(\Q)}
|\Q|^{-2p(x)}u(x)\,dx \\
    & \qquad \leq  \prod_{l=1,3}
\bigg(\frac{\sigma_{\rho(l)}(\Q)}
{\|w_{\rho(l)}^{-1}\chi_\Q\|_{p_{l}^{\prime}(\cdot)}}
      \bigg)^{q(\Q)}\\
    & \qquad \qquad \times
\int_{\Q}\bigg(\prod_{l=1,3}
\|w_{\rho(l)}^{-1}\chi_\Q\|_{p_{l}^{\prime}(\cdot)}^{q(\Q)-p(x)}\bigg)
\bigg(\prod_{l=1,3}
\|w_{\rho(l)}^{-1}\chi_{\Q}\|_{p_{l}^{\prime}(\cdot)}^{p(x)}
|\Q|^{-2p(x)}u(x)\bigg)\,dx.
\end{align*}      
By the $\mathcal{A}_{\vec{p}(\cdot)}$ condition we  have that there is
a constant $c$ such that
 \begin{equation*}
\big\|c|\Q|^{-2}\prod_{l=1 }^{2}
\|w_{l}^{-1}\chi_{\Q}\|_{p_{l}^{\prime}(\cdot)}w\chi_{\Q}
\big\|_{p(\cdot)}
\leq 1,
       \end{equation*}
 which by Lemma~\ref{lemma:mod-norm} implies that
 \begin{equation} \label{eqn:modular-Ap}
 \int_{\Q}\prod_{l=1 }^{2}
\|w_{l}^{-1}\chi_{\Q}\|_{p_{l}^{\prime}(\cdot)}^{p(x)}
|\Q|^{-2p(x)}u(x)\,dx \lesssim 1.
     \end{equation}
Hence, to prove \eqref{EQ-I1estimte} it will suffice to show that for
$l=1,\,2$, 
 \begin{equation}\label{EQ-I11estimate}
\bigg(\frac{\sigma_{l}(\Q)}
{\|w_{l}^{-1}\chi_{\Q}\|_{p_{l}^{\prime}(\cdot)}}\bigg)^{q(\Q)}
\lesssim  \sigma_{l}(\Q)^{\frac{q(\Q)}{(p_{l})_{-}(\Q)}}
\end{equation}        
and
 \begin{equation}\label{EQ-I12estimate}
 \|w_{l}^{-1}\chi_{\Q}\|_{p_{l}^{\prime}(\cdot)}^{q(\Q)-p(x)}
\lesssim 1.
   \end{equation}

   We first prove~\eqref{EQ-I11estimate}.  Suppose that
   $\|w_{l}^{-1}\chi_{\Q}\|_{p_{l}^{\prime}(\cdot)} >1$. Then by
   Lemma~\ref{lemma:mod-norm}, since $(p_l^{\prime})_{\pm}(\Q)=(p_{l})_{\mp}(\Q)^{\prime}$, we have
   that
\begin{equation*}
\bigg(\frac{\sigma_{l}(\Q)}
{\| w_{l}^{-1}\chi_\Q\|_{p_{l}^{\prime}(\cdot)}}\bigg)^{q(\Q)}
\leq \bigg( \sigma_{l}(\Q)
^{1-\frac{1}{(p_{l})_{-}(\Q)^{\prime}}} 
\bigg)^{q(\Q)}
=\sigma_{l}(\Q)^{\frac{q(\Q)}{(p_{l})_{-}(\Q)}}.
\end{equation*}
On the other hand, if $\|w_{l}^{-1}\chi_{\Q}\|_{p_{l}^{\prime}(\cdot)}
\leq 1$, 
 \begin{multline*}
 \frac{\sigma_l(\Q)}{\|w_l^{-1}\chi_\Q\|_{p_l'(\cdot)}}
 \leq  \sigma_l(\Q)^{1-\frac{1}{(p_l)_+(\Q)^{'}}}  
 = \sigma_l(\Q)^{\frac{1}{(p_l)_+(\Q)}}  \\
 = \sigma_l(\Q)^{\frac{1}{(p_l)_-(\Q)}}
 \sigma_l(\Q)^{\frac{1}{(p_l)_+(\Q)}-\frac{1}{(p_l)_-(\Q)}}. 
 \end{multline*}
 Again by Lemma~\ref{lemma:mod-norm}, and then by
 Lemma~\ref{lemma:rescale} and Lemma~\ref{lemma:lemma3.3},
 \begin{align*}
 \sigma_l(\Q)^{\frac{1}{(p_l)_+(\Q)}-\frac{1}{(p_l)_-(\Q)}}
 &\leq  \|w_l^{-\frac{1}{2}}\chi_\Q\|_{2p_l'(\cdot)}
 ^{[2(p_l')_{-}]\big(\frac{1}{(p_l)_+(\Q)}-\frac{1}{(p_l)_-(\Q)}\big)}\\
 &=\|w_l^{-\frac{1}{2}}\chi_\Q\|_{2p_l'(\cdot)}
 ^{[2(p_l')_{-}]\big(1-\frac{1}{(p_l)_+(\Q)^{'}}-1+\frac{1}{(p_l)_-(\Q)^{'}}\big)} \\
 &= \|w_l^{-\frac{1}{2}}\chi_\Q\|_{2p_l'(\cdot)}
 ^{[2(p_l')_{-}]\big(\frac{1}{(p_l^{\prime})_+(\Q)}-\frac{1}{(p_1^{\prime})_-(\Q)}\big)} \\
 & \leqslant \|w_l^{-\frac{1}{2}}\chi_\Q\|_{2p_l'(\cdot)}
 ^{c[(2p_l^{\prime})_-(\Q)-(2p_l^{\prime})_+(\Q)]}\\
 & \lesssim 1.
 \end{align*}
 Hence, 
   \begin{equation}\label{eq:sigma-w-estimate}
  \bigg(\frac{\sigma_1(\Q)}{\|w_1^{-1}\chi_\Q\|_{p_1'(\cdot)}}\bigg)^{q(\Q)}
 \lesssim \sigma_1(\Q)^{\frac{q(\Q)}{(p_1)_-(\Q)}}.
 \end{equation}

We now prove~\eqref{EQ-I12estimate}.  If
$\|w_{l}^{-1}\chi_{\Q}\|_{p_{l}^{\prime}(\cdot)}  
\geq 1$, then this is immediate.  If
$\|w_{l}^{-1}\chi_{\Q}\|_{p_{l}^{\prime}(\cdot)} <1$, then by
Lemma~\ref{lemma:rescale}, and then by
Propositions~\ref{AP-charachterization}
and~\ref{q-relation}, 
\[
  \|w_{l}^{-1}\chi_{\Q}\|_{p_{l}^{\prime}(\cdot)}^{q(\Q)-p(x)}
=  \|w_{l}^{-\frac{1}{2}}\chi_{\Q}\|_{2p_{l}^{\prime}(\cdot)}^{2q(\Q)-2p(x)}
\lesssim 1.
\]
This completes the estimate of $I_1$.                            

\medskip    
     
 \subsection*{The estimate for $I_2$:}
We first form the bilinear Calder\'on-Zygmund cubes associated with
$\M^d(h_{1}\sigma_{1},h_{4}\sigma_{2})$ and we use the same notation
as we did in the estimate for $I_1$.   To estimate this term $I_2$ we need
to divide the cubes $\Q$ into three sets:  small cubes close to
the origin, large cubes close to the origin, and cubes (of all sizes) far
from the origin.  To make this precise, let $\{P_i\}_{i=1}^{2^n}$ be
the $2^n$ dyadic cubes adjacent to the origin, $|P_i|\geq 1$,  that are so
large that  if
$Q$ is any dyadic cube equal to or adjacent to $P_i$ in the same quadrant, and $|P_i|=|Q|$, then,
$u(Q)\geq 1$ and $\sigma_l(Q)\geq 1$, $l=1,2$.  The existence of such
cubes follows from Lemma~\ref{lemma:Ainfty-prop} and
Corollary~\ref{cor:Ainfty}.    Let $P=\bigcup_i P_i$.   We can then partition the cubes $\{\Q\}$ into three disjoint sets:
\begin{align*}
\mathscr{F} & =\lbrace (k,j): \Q \subset P_i \text{ for some } i\rbrace,\\
\mathscr{G}& =\lbrace (k,j): P_i \subset \Q \text{ for some } i \rbrace,\\
\mathscr{H} & =\lbrace (k,j): \Q \cap P_i = \emptyset \text{ for all } i
              \rbrace. 
\end{align*}

We now estimate $I_2$, arguing as we
did at the beginning of the estimate for $I_1$:
  \begin{multline*}
 \int_{\subRn}\M^{d}(h_{1}\sigma_{1},h_{4}\sigma_{2})(x)
^{p(x)}u(x)\,dx
\lesssim
\sum_{k,j}\int_{E_{j}^{k}}\prod_{l=1,4}
\langle h_{l}\sigma_{\rho(l)}\rangle_{\Q}^{p(x)}u(x)\,dx \\
= \sum_{(k,j)\in\mathscr{F}}
+\sum_{(k,j)\in\mathscr{G}}
+\sum_{(k,j)\in\mathscr{H}}
= J_{1}+J_{2}+J_{3}.
\end{multline*}
We will estimate each sum in turn.  

\begin{remark}
Throughout the rest of this
proof, we will allow the implicit constants to depend on
$\sigma_l(P)$ or $u(P)$.  The choice of the $P_i$ is the one place
where the proof depends on the fact that we are working with the
dyadic grid $\D_0$.  For the grids $\D_t$ we will replace the origin
by its translate $\pm t$, where the sign will depend on the scale at
which we choose the $P_i$.   See Remark~\ref{remark:grids} below for
where the dyadic grid and the choice of the $P_i$ affects the proof.
\end{remark}

\subsubsection*{The estimate for $J_{1}$:}
Since $h_{4}\leq 1$ and $p_+<\infty$,  we have that
\begin{align*}
  J_{1}
  & = \sum_{(k,j)\in \mathscr{F}}\int_{E_{j}^{k}}
    \prod_{l=1,4}\langle h_{l}\sigma_{\rho(l)}\rangle_{\Q}^{p(x)}u(x)\,dx
  \\
  & \leq \sum_{(k,j)\in\mathscr{F}}\int_{E_{j}^{k}}
    \langle h_{1}\sigma_{1}\rangle_{\Q}^{p(x)}\langle
    \sigma_{2}\rangle_{\Q}^{p(x)}u(x)\,dx \\
  & = \sum_{(k,j)\in\mathscr{F}}\int_{E_{j}^{k}}
    \bigg(\int_{\Q} h_1\sigma_1\,dy\bigg)^{p(x)}
    \sigma_{2}(\Q)^{p(x)-q(\Q)}
    \sigma_{2}(\Q)^{q(\Q)}|\Q|^{-2p(x)}u(x)\,dx; \\
  \intertext{by inequalities~\eqref{Bound_function_geq1} and~\eqref{EQ-estimateHölder},}
  & \leq \sum_{(k,j)\in\mathscr{F}}\int_{E_{j}^{k}}
    \bigg(\int_{\Q} h_1\sigma_1\,dy\bigg)^{q(\Q)}
    \sigma_{2}(\Q)^{p(x)-q(\Q)}
    \sigma_{2}(\Q)^{q(\Q)}|\Q|^{-2p(x)}u(x)\,dx \\
  & = \sum_{(k,j)\in\mathscr{F}}\int_{E_{j}^{k}}
    \langle h_{1}\rangle_{\sigma_{1},\Q}^{q(\Q)}
    \sigma_{2}(\Q)^{p(x)-q(\Q)}\sigma_{1}(\Q)^{q(\Q)}\sigma_{2}(\Q)^{q(\Q)}
    |\Q|^{-2p(x)}u(x)\,dx \\
  &\leq
    \sum_{(k,j)\in\mathscr{F}}\big(\sigma_{2}(\Q)+1\big)^{p_{+}(\Q)-q(\Q)}
    \langle h_{1}^{\frac{p_{1}(\cdot)}{(p_{1})_{-}}}\rangle_{\sigma_{1},\Q}
    ^{(p_{1})_{-}\frac{q(\Q)}{(p_{1})_{-}(\Q)}}\\
  &   \qquad \qquad\times
    \int_{E_{j}^{k}}\sigma_{1}(\Q)^{q(\Q)}\sigma_{2}(\Q)^{q(\Q)}|\Q|^{-2p(x)}u(x)\,dx.
  \\
  \intertext{Let $q_-$ be defined as in~\eqref{eqn:q-minus}.   By~\eqref{EQ-I1estimte} we can
  estimate the integral:}
  &\lesssim
    \big(\sigma_{2}(P)+1\big)^{p_{+}-q_-}\sum_{(k,j)\in\mathscr{F}}
    \langle h_{1}^{\frac{p_{1}(\cdot)}{(p_{1})_{-}}}
    \rangle_{\sigma_{1},\Q}^{(p_{1})_{-}\frac{q(\Q)}{(p_{1})_{-}(\Q)}} 
    \sigma_{1}(\Q)^{\frac{q(\Q)}{(p_{1})_{-}(\Q)}}\sigma_{2}(\Q)^{\frac{q(\Q)}{(p_{2})_{-}(\Q)}}.\\
  \intertext{Therefore, by Young's inequality
  and by
  Lemma~\ref{lemma:wtd-max-bound}, }
  &\leq \big(\sigma_{2}(P)+1\big)^{p_{+}-q_{-}}\sum_{(k,j)\in
    \mathscr{F}}\big[
    \langle h_{1}^{\frac{p_{1}(\cdot)}{(p_{1})_{-}}}\rangle_{\sigma_{1},\Q}^{(p_{1})_{-}}
    \sigma_{1}(\Q)+\sigma_{2}(\Q)\big]  \\
  &\lesssim    \big(\sigma_{2}(P)+1\big)^{p_{+}-q_{-}}\sum_{(k,j)\in
    \mathscr{F}}\big[
    \langle
    h_{1}^{\frac{p_{1}(\cdot)}{(p_{1})_{-}}}\rangle_{\sigma_{1},\Q}^{(p_{1})_{-}}
    \sigma_{1}(E_{j}^{k})+\sigma_{2}(E_{j}^{k})\big]\\
  & \lesssim \sum_{(k,j)\in \mathscr{F}}\int_{E_{j}^{k}}
    \mathcal{M}_{\sigma_{1}}^{d}\big(f_{1}^{\frac{p_{1}(\cdot)}{(p_{1})_{-}}}\big)(x)
    ^{(p_{1})_{-}}\sigma_{1}(x)\,dx
    + \sum_{(k,j)\in \mathscr{F}} \sigma_{2}(E_{j}^{k})\\
  & \lesssim \int_{\subRn} f_{1}(x)^{p_{1}(x)}\sigma_{1}(x)\,dx
    + \sigma_{2}(P)\\
  & \lesssim 1.
\end{align*}

\medskip

\subsubsection*{The estimate for $J_{2}$:} 
Given $(k,j) \in \mathscr{G}$, since $P_i\subset \Q$, we have that
$1\leq \sigma_2(P_i)\leq \sigma_2(\Q)$.  Therefore, by
Lemma~\ref{lemma:p-infty-cond} applied twice to $w_2^{-\frac{1}{2}}\in
\A_{2p_2'(\cdot)}$, we get
\begin{multline*}
\frac{1}{|\Q|} 
\leq \frac{|P_{i}|}{|\Q|}
\lesssim \bigg(\frac{\sigma_{2}(P_{i})}
{\sigma_{2}(\Q)}\bigg)^{\frac{1}{2(p_{2}^{\prime})_{\infty}}}
\lesssim \sigma_{2}(\Q)^{-\frac{1}{2(p_{2}^{\prime})_{\infty}}}\\
 \lesssim\|w_{2}^{-\frac{1}{2}}\chi_{\Q}\|_{2\cpbp}^{-1}
        \lesssim \|w_{2}^{-1}\chi_{\Q}\|_{\cpbp}^{-\frac{1}{2}}.
     \end{multline*}
Hence, by Lemma~\ref{lemma:gen-holder},
\begin{align*}
\frac{1}{|\Q|^{2}}\int_{\Q} h_{4}(y)\sigma_{2}(y)\,dy
& \lesssim \|w_{2}^{-1}\chi_{\Q}\|_{\cpbp}^{-1}
\int_{\Q} h_{4}(y)\sigma_{2}(y)^{\frac{1}{p_2(y)}}\sigma_{2}(y)^{\frac{1}{p_2'(y)}}\,dy\\
& \lesssim \|w_{2}^{-1}\chi_{\Q}\|_{\cpbp}^{-1}
\|h_{4}\|_{L_{\sigma_{2}}^{p_{2}(\cdot)}}
\|\chi_{\Q}\|_{L_{\sigma_{2}}^{p_{2}^{\prime}(\cdot)}}\\
& \leq \|w_{2}^{-1}\chi_{\Q}\|_{\cpbp}^{-1}
\|f_{2}\|_{L_{\sigma_{2}}^{p_{2}(\cdot)}}
\|w_{2}^{-1}\chi_{\Q}\|_{\cpbp}\\
& \leq c_0.
\end{align*}

We can now estimate $J_2$.  By inequality~\eqref{Bound_function_geq1}
and Lemmas~\ref{lemma:p-infty-px} and~\ref{lemma:infty-bound}, there
exists $t>1$ such that
 \begin{align*}
J_{2} 
& =
\sum_{(k,j)\in \mathscr{G}}\int_{E_{j}^{k}}
c_0^{p(x)} \bigg(\int_{\Q}{h_{1}\sigma_{1}\,dy}\bigg)^{p(x)}
\bigg(\frac{c_0^{-1}}{|\Q|^{2}}
\int_{\Q}h_{4}\sigma_{2}\,dy\bigg)^{p(x)}u(x)\,dx\\
& \lesssim \sum_{(k,j)\in\mathscr{G}} c_0^{p_+}\int_{E_{j}^{k}}
{\bigg(\int_{\Q}h_{1}\sigma_{1}\,dy\bigg)^{p_{\infty}}
\bigg(\frac{c_0^{-1}}{|\Q|^{2}}
\int_{\Q}h_{4}\sigma_{2}\,dy}\bigg)^{p_{\infty}}u(x)\,dx\\
& \qquad \qquad \qquad 
+ \sum_{(k,j)\in \mathscr{G}}\int_{E_{j}^{k}}
\frac{u(x)}{(e+\abs{x})^{ntp_{-}}}\,dx \\
&\lesssim \sum_{(k,j)\in \mathscr{G}}
\prod_{l=1,4}\langle h_{l}\rangle_{\sigma_{\rho(l),\Q}}^{p_{\infty}}
\sigma_{1}(\Q)^{p_{\infty}}
\sigma_{2}(\Q)^{p_{\infty}}|\Q|^{-2p_{\infty}}u(E_{j}^{k})+1.
\end{align*}

We estimate each term in the product separately.  First, we claim that
\begin{equation}\label{P_infty_Bounded}
\sigma_{1}(\Q)^{p_{\infty}}\sigma_{2}(\Q)^{p_{\infty}}
|\Q|^{-2p_{\infty}}u(E_{j}^{k})
\lesssim \sigma_{1}(\Q)^{\frac{p_{\infty}}{(p_{1})_{\infty}}}
\sigma_{2}(\Q)^{\frac{p_{\infty}}{(p_{2})_{\infty}}}.
\end{equation}
Since $\sigma_l(\Q),\, u(\Q) \geq 1$, by
Lemma~\ref{lemma:p-infty-cond} (applied several times) 
and the definition of $\mathcal{A}_{\vec{p}(\cdot)}$, we have that
\begin{align*}  
\bigg[\sigma_{1}(\Q)\sigma_{2}(\Q)\bigg]^{p_{\infty}}
& \lesssim 
\bigg( \|w_{1}^{-\frac{1}{2}}\chi_{\Q}\|_{2\cpap}^{2(p_{1}^{\prime})_{\infty}}
\|w_{2}^{-\frac{1}{2}}\chi_{\Q}\|_{2\cpbp}^{2(p_{2}^{\prime})_{\infty}}\bigg)^{p_{\infty}} \\
& = \bigg(\|w_{1}^{-1}\chi_{\Q}\|_{\cpap}^{(p_{1}^{\prime})_{\infty}-1}
\|w_{2}^{-1}\chi_{\Q}\|_{\cpbp}^{(p_{2}^{\prime})_{\infty}-1}\bigg)^{p_{\infty}}
\bigg(\prod_{l=1}^{2} \|w_{l}^{-1}\chi_{\Q}\|_{p_{l}^{\prime}(\cdot)}
\bigg)^{p_{\infty}}\\
&\lesssim \bigg( \|w_{1}^{-1}\chi_{\Q}\|_{\cpap}^{(p_{1}^{\prime})_{\infty}-1}
\|w_{2}^{-1}\chi_{\Q}\|_{\cpbp}^{(p_{2}^{\prime})_{\infty}-1}\bigg)^{p_{\infty}}
\bigg(\frac{|\Q|^{2}}{\|w\chi_{\Q}\|_{\pp}}\bigg)^{p_{\infty}}\\
& \lesssim\bigg(
\sigma_{1}(\Q)^{\frac{(p_{1}^{\prime})_{\infty}-1}{(p_{1}^{\prime})_{\infty}}}
\sigma_{2}(\Q)^{\frac{(p_{2}^{\prime})_{\infty}-1}{(p_{2}^{\prime})_{\infty}}}
\bigg)^{p_{\infty}}\frac{|\Q|^{2p_{\infty}}}{u(\Q)}\\
&\leq\sigma_{1}(\Q)^{\frac{p_{\infty}}{(p_{1})_{\infty}}}
\sigma_{2}(\Q)^{\frac{p_{\infty}}{(p_{2})_{\infty}}}
\frac{|\Q|^{2p_{\infty}}}{u(E_{j}^{k})}.
\end{align*}
This proves \eqref{P_infty_Bounded}. 

Second, by Lemma~\ref{lemma:gen-holder} and again by
Lemma~\ref{lemma:p-infty-cond} we have that
  \begin{multline} \label{sigma1_Averange_Bound}
  \frac{1}{\sigma_{1}(\Q)}\int_{\Q}{h_{1}(y)\sigma_{1}(y)\,dy}
\lesssim 
\sigma_{1}(\Q)^{-1}\|h_{1}\|_{L_{\sigma_{1}}^{p_{1}(\cdot)}}
\|\chi_{\Q}\|_{L_{\sigma_{1}}^{p_{1}^{\prime}(\cdot)}} \\
\leq
\sigma_{1}(\Q)^{-1}\|f\|_{L_{\sigma_{1}}^{p_{1}(\cdot)}} 
\|w_{1}^{-1}\chi_{\Q}\|_{\cpap}
\lesssim  \sigma_{1}(\Q)^{\frac{1}{(p_{1}^{\prime})_{\infty}}-1}
\leq \sigma_{1}(P_i)^{-\frac{1}{(p_{1})_{\infty}}}
\lesssim 1.
\end{multline}

We can now continue the estimate of $J_2$.   Since
\begin{equation*}
1=\frac{p_{\infty}}{(p_{1})_{\infty}}+\frac{p_{\infty}}{(p_{2})_{\infty}},
\end{equation*}
by~\eqref{P_infty_Bounded} and Young's inequality,
\begin{align}
\nonumber  J_{2}
& \lesssim \sum_{(k,j)\in \mathscr{G}}
\prod_{l=1,4}\langle h_{l}\rangle_{\sigma_{\rho(l)},\Q}^{p_{\infty}}
\sigma_{\rho(l)}(\Q)^{\frac{p_{\infty}}{(p_{l})_{\infty}}} + 1\\
\label{eqn:J2-final-est} & \lesssim \sum_{(k,j)\in \mathscr{G}}
\langle h_{1}\rangle_{\sigma_{1},\Q}^{(p_{1})_{\infty}}
\sigma_{1}(\Q)
+ \sum_{(k,j)\in \mathscr{G}}
\langle h_{4}\rangle_{\sigma_{2},\Q}^{(p_{2})_{\infty}}\sigma_{2}(\Q)
  +1;\\
\nonumber  & \lesssim \sum_{(k,j)\in \mathscr{G}}
\langle c_0^{-1}h_{1}\rangle_{\sigma_{1},\Q}^{(p_{1})_{\infty}}
\sigma_{1}(E_{j}^{k})
+ \sum_{(k,j)\in \mathscr{G}}
\langle h_{4}\rangle_{\sigma_{2},\Q}^{(p_{2})_{\infty}}
\sigma_{2}(E_{j}^{k}) +1; \\
\intertext{by Lemmas~\ref{lemma:p-infty-px}
  and~\ref{lemma:infty-bound} there exists $t>1$ such that,}
\nonumber  &\lesssim \sum_{(k,j)\in \mathscr{G}}\int_{E_{j}^{k}}
\langle c_0^{-1}h_{1}\rangle_{\sigma_{1},\Q}^{p_{1}(x)}
\sigma_{1}(x)\,dx
+ \sum_{(k,j)\in \mathscr{G}}\int_{E_{j}^{k}}
\frac{\sigma_{1}(x)}{(e+|x|)^{tn(p_{1})_{-}}}\,dx\\ 
\nonumber    & \qquad \qquad \qquad 
 +\sum_{(k,j)\in \mathscr{G}}
\int_{E_{j}^{k}}{M}_{\sigma_{2}}^{d}h_{4}(x)^{(p_{2})_{\infty}}
\sigma_{2}(x)\,dx +1\\
\nonumber  & \lesssim \sum_{(k,j)\in \mathscr{G}}\int_{E_{j}^{k}}
\langle
   c_0^{-1}h_{1}^{\frac{p_{1}(\cdot)}{(p_{1})_{-}}}\rangle_{\sigma_{1},\Q}^{(p_{1})_{-}}
   \sigma_{1}(x)\,dx
+ \int_{\subRn}{M}_{\sigma_{2}}^{d}h_{4}(x)^{(p_{2})_{\infty}}
\sigma_{2}(x)\,dx + 1;\\
\intertext{by Lemma~\ref{lemma:wtd-max-bound} applied twice,} 
\nonumber  & \lesssim \int_{\subRn}
{M}_{\sigma_{1}}^{d}(h_{1}^{\frac{p_{1}(\cdot)}{(p_{1})_{-}}})(x)^{(p_{1})_{-}}
\sigma_{1}(x)\,dx
+ \int_{\subRn}h_{4}(x)^{(p_{2})_{\infty}}\sigma_{2}(x)\,dx + 1\\
\nonumber &\lesssim \int_{\subRn} h_{1}(x)^{p_{1}(x)}\sigma_{1}(x)\,dx
+ \int_{\subRn}h_{4}(x)^{(p_{2})_{\infty}}\sigma_{2}(x)\,dx + 1\\
\nonumber  & \lesssim \int_{\subRn}h_{4}(x)^{(p_{2})_{\infty}}\sigma_{2}(x)\,dx
   + 1.   \\         
\intertext{Finally, we again apply Lemmas~\ref{lemma:p-infty-px} and 
\ref{lemma:infty-bound} to get } 
\nonumber & \lesssim \int_{\subRn}h_{4}(x)^{p_{2}(x)}\sigma_{2}(x)\,dx
+\int_{\subRn}\frac{\sigma_{2}(x)}{(e+|x|)^{tn(p_{2})_{-}}}\,dx +1\\
\nonumber & \lesssim 1,
    \end{align}
which completes the estimate for $J_2$. 

\medskip
\subsubsection*{The estimate for $J_{3}$:} 
If $\Q$ is  such that $(k,j)\in \mathscr{H}$, then $\Q$ does not
contain the origin.  Since it is a dyadic cube, we have that
 $\dist(\Q,0) \geq \ell(\Q)$. Therefore,  there exists a constant
 $R>1$ depending only on $n$  such that 
\begin{equation}\label{H_cube_estimate}
\sup_{x\in \Q} |x| \leq R \inf_{x\in \Q} |x|. 
\end{equation}

\begin{remark} \label{remark:grids}
The estimate $\dist(\Q,0) \geq \ell(\Q)$ holds because we are working
with the grid $\D_0$.  For an arbitrary grid $\D_t$,  since the origin
will be contained in one of the cubes $P_i$,  we will have that for
some $c>0$, $\dist(\Q,0) \geq c\ell(\Q)$, and so
\eqref{H_cube_estimate} will hold with a possibly larger constant $R$.
\end{remark}

\medskip

By the continuity of $p(\cdot)$, 
there exists $x_+$ in the closure of $\Q$ such
that $p_{+}(\Q)=p(x_{+})$.  Hence, since $\pp \in LH$, for all $x\in
\Q$, by~\eqref{H_cube_estimate},
\begin{multline} \label{eqn:p-J3-est}
0 \leq p_{+}(\Q)-p(x)
\leq  |p(x_{+})-p(x)|+ |p(x)-p_{\infty}| \\
\leq \frac{C_{\infty}}{\log(e+|x_{+}|)}
+\frac{C_{\infty}}{\log(e+|x|)}
 \lesssim \frac{1}{\log(e+|x|)}.
\end{multline}
In the same way, for $l=1,2$ we have that $p_l(\cdot)$ satisfies
\begin{equation} \label{eqn:p2-J3-est}
|(p_{l})_{-}(\Q)-p_{l}(x)|
\lesssim \frac{1}{\log(e+|x|)}.
\end{equation}

\smallskip

To estimate $J_{3}$ we need to divide $\mathscr{H}$ into two subsets
depending on the size of the cubes~$\Q$ with respect to $\sigma_2$:
\[ 
  \mathscr{H}_{1}=\lbrace (k,j) \in \mathscr{H}:
  \sigma_2(\Q)\leq 1 \rbrace, \quad
\mathscr{H}_{2}=\lbrace (k,j) \in \mathscr{H}: \sigma_2(\Q)>1
\rbrace.
\]
We first estimate the sum over $\mathscr{H}_{1}$.
By~\eqref{eqn:p-J3-est} and Lemmas~\ref{lemma:p-infty-px}
and~\ref{lemma:infty-bound}, 
\begin{align*}
 &\sum_{(k,j)\in \mathscr{H}_{1}} \int_{E_{j}^{k}}
\prod_{l=1,4}\langle h_{l}\sigma_{\rho(l)} \rangle_{\Q}^{p(x)}
   u(x)\,dx \\
 & \qquad \lesssim \sum_{(k,j)\in \mathscr{H}_{1}} \int_{E_{j}^{k}}
\prod_{l=1,4}\langle h_{l}\sigma_{\rho(l)}
   \rangle_{\Q}^{p_{+}(\Q)} u(x)\,dx
+\sum_{(k,j)\in \mathscr{H}_{1}}\int_{E_{j}^{k}}\frac{u(x)}{(e+|x|)^{tnp_{-}}}\,dx\\
   & \qquad  \leq \sum_{(k,j)\in \mathscr{H}_{1}}
     \int_{E_{j}^{k}}\prod_{l=1,4}\langle h_{l}\sigma_{\rho(l)}
  \rangle_{\Q}^{p_{+}(\Q)} u(x)\,dx + 1.
\intertext{
By Lemma~\ref{lemma:diening}, \eqref{Bound_function_geq1}, and
     \eqref{EQ-I1estimte}, and since $h_1\geq 1$, $h_4\leq 1$ and $\sigma_2(\Q)\leq 1$,}
 &\qquad = \sum_{(k,j)\in \mathscr{H}_{1}} \int_{E_{j}^{k}}
\bigg(\int_{\Q}{h_{1}\sigma_{1}\,dy}\bigg)^{p_{+}(\Q)}
\bigg(\frac{1}{\sigma_{2}(\Q)}\int_{\Q}{h_{4}\sigma_{2}\,dy}\bigg)^{p_{+}(\Q)}\\
& \qquad \qquad \qquad \qquad \times
  |\Q|^{-2p_{+}(\Q)}\sigma_{2}(\Q)^{p_{+}(\Q)}u(x)\,dx +1; \\
& \qquad \lesssim \sum_{(k,j)\in \mathscr{H}_{1}} 
\langle h_{1} \rangle_{\sigma_{1},\Q}^{q(\Q)}
\langle h_{4}\rangle_{\sigma_{2},\Q}^{q(\Q)}
\int_{E_{j}^{k}}|\Q|^{-2p(x)}
\sigma_{1}(\Q)^{q(\Q)}\sigma_{2}(\Q)^{q(\Q)} u(x)\,dx + 1\\
& \qquad \lesssim \sum_{(k,j)\in \mathscr{H}_{1}}
\prod_{l=1,4}\langle h_{l}\rangle_{\sigma_{\rho(l),\Q}}^{q(\Q)}
\sigma_{1}(\Q)^{\frac{q(\Q)}{(p_{1})_{-}(\Q)}}\sigma_{2}(\Q)^{\frac{q(\Q)}{(p_{2})_{-}(\Q)}}
  + 1\\
&\qquad \leq  \sum_{(k,j)\in \mathscr{H}_{1}}
\langle   h_{1}^{\frac{p_{1}(\cdot)}{(p_{1})_{-}(\Q)}} \rangle_{\sigma_{1},\Q}^{q(\Q)}
\sigma_{1}(\Q)^{\frac{q(\Q)}{(p_{1})_{-}(\Q)}}
\langle h_{4}\rangle_{\sigma_{2},\Q}^{q(\Q)}
\sigma_{2}(\Q)^{\frac{q(\Q)}{(p_{2})_{-}(\Q)}} + 1;\\
\intertext{by H\"older's inequality and Young's inequality,}
&\qquad \leq \sum_{(k,j)\in \mathscr{H}_{1}}
\langle h_{1}^{\frac{p_{1}(\cdot)}{(p_{1})_{-}}}
  \rangle_{\sigma_{1},\Q}^{(p_{1})_{-}\frac{q(\Q)}{(p_{1})_{-}(\Q)}}
\sigma_{1}(\Q)^{\frac{q(\Q)}{(p_{1})_{-}(\Q)}}
\langle h_{4}\rangle_{\sigma_{2},\Q}^{q(\Q)}
\sigma_{2}(\Q)^{\frac{q(\Q)}{(p_{2})_{-}(\Q)}} +1\\
& \qquad \lesssim \sum_{(k,j)\in \mathscr{H}_{1}}
\langle
  h_{1}^{\frac{p_{1}(\cdot)}{(p_{1})_{-}}}\rangle_{\sigma_{1},\Q}^{(p_{1})_{-}}
\sigma_{1}(\Q)
+ \sum_{(k,j)\in \mathscr{H}_{1}}
\langle h_{4}\rangle_{\sigma_{2},\Q}^{(p_{2})_{-}(\Q)}
\sigma_{2}(\Q) + 1\\
& \qquad = K_{1}+K_{2} +1.
\end{align*}

The proof that $K_1$ is bounded is exactly the same  as the
final estimate for $I_1$, beginning at \eqref{eqn:final-I1-est}.
Therefore, to complete the estimate for the sum over
$\mathscr{H}_{1}$, we need to bound $K_2$.  
By Lemmas~\ref{lemma:p-infty-px} and ~\ref{lemma:infty-bound} (applied
twice) and by Lemma~\ref{lemma:wtd-max-bound},
\begin{align*}
K_{2}
& \lesssim  \sum_{(k,j)\in \mathscr{H}_{1}} \int_{E_{j}^{k}}
\langle
  h_{4}\rangle_{\sigma_{2},\Q}^{(p_{2})_{-}(\Q)}\sigma_{2}(x)\,dx \\
&\lesssim \sum_{(k,j)\in \mathscr{H}_{1}}
 \int_{E_{j}^{k}}\langle
  h_{4}\rangle_{\sigma_{2},\Q}^{(p_{2})_{\infty}}
\sigma_{2}(x)\,dx
+ \sum_{(k,j)\in \mathscr{H}_{1}}
 \int_{E_{j}^{k}}\frac{\sigma_{2}(x)}{(e+|x|)^{nt(p_{2})_{-}}}\,dx \\
&\leq \int_{\subRn}
  {M}_{\sigma_{2}}^{d}h_{4}(x)^{(p_{2})_{\infty}}\sigma_{2}(x)\,dx
 +
\int_{\subRn}\frac{\sigma_{2}(x)}{(e+|x|)^{nt(p_{2})_{-}}}\,dx\\
&\lesssim \int_{\subRn}h_{4}(x)^{(p_{2})_{\infty}}\sigma_{2}(x)\,dx + 1\\
& \lesssim \int_{\subRn}h_{4}(x)^{p_{2}(x)}\sigma_{2}(x)\,dx
+\int_{\subRn}\frac{\sigma_{2}(x)}{(e+|x|)^{nt(p_{2})_{-}}}\,dx + 1\\
& \lesssim 1.
\end{align*}

\medskip

To estimate the sum over $\mathscr{H}_{2}$, first note that by
Lemma~\ref{lemma:gen-holder} we have that
\begin{multline}\label{H2-first-estimate}
\int_{\Q}{h_{1}\sigma_{1}\,dy}
\lesssim \|h_{1}\|_{L_{\sigma_{1}}^{\pap}} 
\|\chi_{\Q}\|_{L_{\sigma_{1}}^{\cpap}}\\
\lesssim \|f_{1}\|_{L_{\sigma_{1}}^{\pap}}
\|w_{1}^{-1}\chi_{\Q}\|_{\cpap}
\leq c_0\|w_{1}^{-1}\chi_{\Q}\|_{\cpap};
\end{multline}
similarly, we have that
\begin{equation}\label{H2-second-estimate}
\int_{\Q}{h_{4}\sigma_{2}\,dy}
\leq c_0\|w_{2}^{-1}\chi_{\Q}\|_{\cpbp}.
\end{equation}

We now divide the cubes in $\mathscr{H}_2$ into two
subsets depending on the size of $\sigma_1(\Q)$:
\[ \mathscr{H}_{2a} = \{ (k,j)\in \mathscr{H}_2 : \sigma_1(\Q) \geq 1 \},
  \quad
\mathscr{H}_{2b} = \{ (k,j)\in \mathscr{H}_2 : \sigma_1(\Q)< 1 \}.  \]
We first estimate the sum over $\mathscr{H}_{2a}$. 
Given  \eqref{H2-first-estimate} and  \eqref{H2-second-estimate}, by
Lemma~\ref{lemma:p-infty-px},
\begin{align*}
 &\sum_{(k,j)\in \mathscr{H}_{2a}} \int_{E_{j}^{k}}\prod_{l=1,4}
\langle h_{l}\sigma_{\rho(l)}\rangle_{\Q}^{p(x)}u(x)\,dx \\
 & \quad \quad  \leq c_0^{2p_+} \sum_{(k,j)\in \mathscr{H}_{2a}}
   \int_{E_{j}^{k}} \prod_{l=1,4}
\bigg(c_0^{-1}\|w_{\rho(l)}^{-1}\chi_{\Q}\|_{p_{\rho(l)}^{\prime}(\cdot)}^{-1}
\int_{\Q}h_{l}\sigma_{\rho(l)}\,dy \bigg)^{p(x)} \\
& \qquad \qquad \qquad \qquad \qquad 
\times \prod_{l=1}^{2}
\bigg(\frac{\|w_{l}^{-1}\chi_{\Q}\|_{p_{l}^{\prime}(\cdot)}}{|\Q|}\bigg)^{p(x)}u(x)\,dx\\
 &  \quad \quad \lesssim \sum_{(k,j)\in
   \mathscr{H}_{2a}}\int_{E_{j}^{k}}
\prod_{l=1,4}\bigg(c_0^{-1}\|w_{\rho(l)}^{-1}\chi_{\Q}\|_{p_{\rho(l)}^{\prime}(\cdot)}^{-1}
\int_{\Q}h_{l}\sigma_{\rho(l)}\,dy\bigg)^{p_\infty} \\
& \qquad \qquad \qquad \qquad \qquad 
\times 
\prod_{l=1}^{2}\bigg(\frac{\|w_{l}^{-1}\chi_{\Q}\|_{p_{l}^{\prime}(\cdot)}}
{|\Q|}\bigg)^{p(x)}u(x)\,dx \\
 & \qquad \qquad \qquad  + 
\sum_{(k,j)\in \mathscr{H}_{2a}} \int_{E_{j}^{k}} \prod_{l=1}^{2}
\bigg(\frac{\|w_{l}^{-1}\chi_{\Q}\|_{p_{l}^{\prime}(\cdot)}}{|\Q|}\bigg)^{p(x)}
\frac{u(x)}{(e+|x|)^{tnp_{-}}}\,dx \\
 & \quad \quad =L_{1}+ L_{2}.
\end{align*}

We first estimate $L_2$. Since
$\sigma_2(E_j^k)\gtrsim\sigma_{2}(\Q)\geqslant 1$, by
\eqref{H_cube_estimate}, \eqref{eqn:modular-Ap} and
Lemma~\ref{lemma:infty-bound},
 \begin{align*}
         L_{2}
& \leq \sum_{(k,j)\in \mathscr{H}_{2a}}\sup_{x\in
  \Q}(e+|x|)^{-ntp_{-}}
\int_{\Q}{\prod_{l=1}^{2}\|w_{l}^{-1}\chi_{\Q}\|_{p_{l}^{\prime}(\cdot)}^{p(x)}
|\Q|^{-2p(x)}u(x)\,dx}\\
  & \lesssim \sum_{(k,j)\in \mathscr{H}_{2a}}\inf_{x\in \Q}(e+|x|)^{-ntp_{-}}\sigma_{2}(E_j^k)\\
 & \lesssim \int_{\subRn}\frac{\sigma_{2}(x)}{(e+|x|)^{ntp_{-}}}\,dx \\
 & \lesssim 1.
 \end{align*}
              
\smallskip
    
In order to estimate $L_{1}$ we first note that for $l=1,2$, since
$\sigma_l(\Q)\geq 1$, by
Lemma~\ref{lemma:p-infty-cond}, 
\begin{equation} \label{eqn:p-infty-Hest}
      \bigg(\frac{\sigma_{l}(\Q)}
{\|w_{l}^{-1}\chi_{\Q}\|_{p_l'(\cdot)}}\bigg)^{p_{\infty}}
 \lesssim
 \bigg(\frac{\sigma_{l}(\Q)}
{\sigma_{l}(\Q)^{\frac{1}{(p_{l}^{\prime})_{\infty}}}}\bigg)^{p_{\infty}}
=  \sigma_{l}(\Q)^{\frac{p_{\infty}}{(p_{l})_{\infty}}}. 
\end{equation}
Given this estimate, by~\eqref{eqn:modular-Ap} and Young's inequality
we have that
 \begin{align*}
    L_{1}&\lesssim \sum_{(k,j)\in
           \mathscr{H}_{2a}}\int_{E_{j}^{k}}{\prod_{l=1,4}
\langle h_{l} \rangle_{\sigma_{\rho(l)},\Q}^{p_{\infty}}
\sigma_{1}(\Q)^{\frac{p_{\infty}}{(p_{1})_{\infty}}}
\sigma_{2}(\Q)^{\frac{p_{\infty}}{(p_{2})_{\infty}}}}\\
    & \qquad \qquad \qquad \qquad  \times
      \prod_{l=1}^{2}\bigg(\frac{\|w_{l}^{-1}\chi_{\Q}
      \|_{p_{l}^{\prime}(\cdot)}}{|\Q|}\bigg)^{p(x)}u(x)\,dx \\
    & \leq \sum_{(k,j)\in \mathscr{H}_{2a}}
\prod_{l=1,4} \langle h_{l} \rangle_{\sigma_{\rho(l)},\Q}^{p_{\infty}}
\sigma_{1}(\Q)^{\frac{p_{\infty}}{(p_{1})_{\infty}}}
\sigma_{2}(\Q)^{\frac{p_{\infty}}{(p_{2})_{\infty}}}\\
    &  \qquad \qquad \qquad \qquad
      \times\int_{\Q}{\prod_{l=1}^{2}\|w_{l}^{-1}\chi_{\Q} 
\|_{p_{l}^{\prime}(\cdot)}^{p(x)}|\Q|^{-2p(x)}u(x)\,dx}\\
    &\lesssim \sum_{(k,j)\in \mathscr{H}_{2a}}
\langle h_{1}\rangle_{\sigma_{1},\Q}^{(p_{1})_{\infty}}\sigma_{1}(\Q)
+\sum_{(k,j)\in \mathscr{H}_{2a}}\langle
      h_{4}\rangle_{\sigma_{2},\Q}^{(p_{2})_{\infty}}
\sigma_{2}(\Q).\\
\end{align*}     
The estimate of the last term is identical to the estimate for $J_2$
above, beginning at inequality~\eqref{eqn:J2-final-est}; here we
use the fact that $\sigma_1(\Q)\geq 1$ to
get~\eqref{sigma1_Averange_Bound}.    

\smallskip

The estimate over $\mathscr{H}_{2b}$ is similar, but we must replace the
exponent $p_\infty$ with $r(\Q)$, which is defined by
\[ \frac{1}{r(\Q)} = \frac{1}{(p_1)_-(\Q)} + \frac{1}{(p_2)_\infty}.  \]
Then by~\eqref{eqn:p2-J3-est}, for $x\in \Q$,
\[ \bigg|\frac{1}{p(x)}-\frac{1}{r(\Q)}\bigg|
\leq \bigg|\frac{1}{p_1(x)}-\frac{1}{(p_1)_-(\Q)}\bigg|
+ \bigg|\frac{1}{p_2(x)}-\frac{1}{(p_2)_\infty}\bigg| 
\lesssim \frac{1}{\log(e+|x|)}.  \]
We can then argue as we did for the sum over $\mathscr{H}_{2a}$ above
to get
\begin{align*}
 &\sum_{(k,j)\in \mathscr{H}_{2b}} \int_{E_{j}^{k}}\prod_{l=1,4}
\langle h_{l}\sigma_{\rho(l)}\rangle_{\Q}^{p(x)}u(x)\,dx \\
 &  \quad \quad \lesssim \sum_{(k,j)\in
   \mathscr{H}_{2b}}\int_{E_{j}^{k}}
\prod_{l=1,4}\bigg(c_0^{-1}\|w_{\rho(l)}^{-1}\chi_{\Q}\|_{p_{\rho(l)}^{\prime}(\cdot)}^{-1}
\int_{\Q}h_{l}\sigma_{\rho(l)}\,dy\bigg)^{r(\Q)} \\
& \qquad \qquad \qquad \qquad \qquad 
\prod_{l=1}^{2}\bigg(\frac{\|w_{l}^{-1}\chi_{\Q}\|_{p_{l}^{\prime}(\cdot)}}
{|\Q|}\bigg)^{p(x)}u(x)\,dx \\
 & \qquad \qquad \qquad  + 
\sum_{(k,j)\in \mathscr{H}_{2b}} \int_{E_{j}^{k}} \prod_{l=1}^{2}
\bigg(\frac{\|w_{l}^{-1}\chi_{\Q}\|_{p_{l}^{\prime}(\cdot)}}{|\Q|}\bigg)^{p(x)}
\frac{u(x)}{(e+|x|)^{tnp_{-}}}\,dx \\
 & \quad \quad =M_{1}+ M_{2}.
\end{align*}

The estimate for $M_2$ is identical to the estimate for $L_2$.  To
estimate $M_1$, we again use \eqref{eqn:p-infty-Hest} for $\sigma_2$,
replacing $p_\infty$ with $r(\Q)$.  Because $\sigma_1(\Q)<1$ we need
to replace~\eqref{eqn:p-infty-Hest} with a different estimate. Since
$(p_1^{'})_{\pm}(\Q)=(p_1)_{\mp}(\Q)^{'}$, by the estimate
\eqref{eq:sigma-w-estimate}, replacing $q(\Q)$ with $r(\Q)$, we get
\[ \bigg(\frac{\sigma_1(\Q)}{\|w_1^{-1}\chi_\Q\|_{p_1'(\cdot)}}\bigg)^{r(\Q)}
\lesssim \sigma_1(\Q)^{\frac{r(\Q)}{(p_1)_-(\Q)}}. \]

We can now modify the estimate for $L_1$ to estimate $M_1$:
\begin{align*}
    M_{1}&\lesssim \sum_{(k,j)\in
           \mathscr{H}_{2b}}\int_{E_{j}^{k}}{\prod_{l=1,4}
\langle h_{l} \rangle_{\sigma_{\rho(l)},\Q}^{r(\Q)}
\sigma_{1}(\Q)^{\frac{r(\Q)}{(p_1)_-(\Q)}}
\sigma_{2}(\Q)^{\frac{r(\Q)}{(p_{2})_{\infty}}}}\\
    & \qquad \qquad \qquad \qquad  \times
      \prod_{l=1}^{2}\bigg(\frac{\|w_{l}^{-1}\chi_{\Q}
      \|_{p_{l}^{\prime}(\cdot)}}{|\Q|}\bigg)^{p(x)}u(x)\,dx \\
    & \leq \sum_{(k,j)\in \mathscr{H}_{2b}}
\prod_{l=1,4}\langle h_{l} \rangle_{\sigma_{\rho(l)},\Q}^{r(\Q)}
\sigma_{1}(\Q)^{\frac{r(\Q)}{(p_1)_-(\Q)}}
\sigma_{2}(\Q)^{\frac{r(\Q)}{(p_{2})_{\infty}}}\\
    &  \qquad \qquad \qquad \qquad
      \times\int_{\Q}{\prod_{l=1}^{2}\|w_{l}^{-1}\chi_{\Q} 
\|_{p_{l}^{\prime}(\cdot)}^{p(x)}|\Q|^{-2p(x)}u(x)\,dx}\\
    &\lesssim \sum_{(k,j)\in \mathscr{H}_{2b}}
\langle h_{1}\rangle_{\sigma_{1},\Q}^{(p_{1})_-(\Q)}\sigma_{1}(\Q)
+\sum_{(k,j)\in \mathscr{H}_{2b}}\langle
      h_{4}\rangle_{\sigma_{2},\Q}^{(p_{2})_{\infty}}
\sigma_{2}(\Q).\\
\end{align*}     
The estimate for the second term in the last line is the same as the
final estimate for $J_2$; we use the same argument above to estimate
$L_1$.  The estimate for the first term is the same as the estimate
for $K_1$ above, noting that since $h_1\geq 1$ and by H\"older's
inequality,
\[ \langle h_{1}\rangle_{\sigma_{1},\Q}^{(p_{1})_-(\Q)}
\leq \langle
h_{1}^{\frac{\pap}{(p_{1})_-(\Q)}}\rangle_{\sigma_{1},\Q}^{(p_{1})_-(\Q)}
\leq \langle
h_{1}^{\frac{\pap}{(p_{1})_-}}\rangle_{\sigma_{1},\Q}^{(p_{1})_-}. \]
This
completes the estimate of $M_1$ and so of $I_2$.

\begin{remark}
As noted above, the argument for $I_3$ is the same as that for $I_2$, 
replacing $h_1\sigma_1$ with $h_3\sigma_2$ and $h_4\sigma_2$ with
$h_2\sigma_1$. 
\end{remark}

\medskip    
     
 \subsection*{The estimate for $I_4$:}
The estimate for $I_4$ parallels that for $I_2$; in particular we will
decompose $I_4$ into essentially the same parts as we did above.
For some parts the estimate is very similar to the
 corresponding part $I_2$, and so we give the key inequalities but
 will omit some of the details.    For other parts we will need to
 modify the argument and we will present these in more detail. 

 Begin by forming the bilinear Calder\'on-Zygmund cubes associated
 with $\M^d(h_2\sigma_1,h_4\sigma_2)$.  We then decompose the
 collection of these cubes into the sets $\mathscr{F}$, $\mathscr{G}$
 and $\mathscr{H}$, defined as above.  Denote the sums over these sets
 by $N_1$, $N_2$ and $N_3$.

\subsubsection*{The estimate for $N_{1}$:} 
The estimate for $N_1$  is very similar to that for $J_1$ above.  We
replace the arguments used for the $h_1$ term and estimate the $h_2$
term and the $h_4$ term in the same way, using the fact that
$h_2,\,h_4 \leq 1$:
\begin{align*}
N_{1}
&= \sum_{(k,j)\in \mathscr{F}}\int_{E_{j}^{k}}{\prod_{l=2,4}\langle h_{l}\sigma_{\rho(l)}\rangle_{\Q}^{p(x)}u(x)\,dx}\\
&\leq \sum_{(k,j)\in \mathscr{F}}\int_{E_{j}^{k}}{\prod_{l=1}^{2}\langle \sigma_{l}\rangle_{\Q}^{p(x)}u(x)\,dx}\\
&= \sum_{(k,j)\in \mathscr{F}}\int_{E_{j}^{k}}
\prod_{l=1}^{2}\sigma_{l}(\Q)^{p(x)-q(\Q)}
\sigma_{1}(\Q)^{q(\Q)}\sigma_{2}(\Q)^{q(\Q)}|\Q|^{-2p(x)}u(x)\,dx \\
 &\leq \sum_{(k,j)\in \mathscr{F}}
\prod_{l=1}^{2}\bigg(1+\sigma_{l}(\Q) \bigg)^{p_{+}(\Q)-q(\Q)}
\int_{E_{j}^{k}}\sigma_{1}(\Q)^{q(\Q)}\sigma_{2}(\Q)^{q(\Q)}|\Q|^{-2p(x)}u(x)\,dx\\
 &\lesssim \prod_{l=1}^{2}\bigg(1+\sigma_{l}(P)\bigg)^{p_{+}-q_{-}}
\sum_{(k,j)\in \mathscr{F}}
\sigma_{1}(\Q)^{\frac{q(\Q)}{(p_{1})_{-}(\Q)}}\sigma_{2}(\Q)^{\frac{q(\Q)}{(p_{2})_{-}(\Q)}}\\
 &\lesssim\sum_{(k,j)\in \mathscr{F}}\sigma_{1}(\Q)
+ \sum_{(k,j)\in \mathscr{F}}\sigma_{2}(\Q)\\
 &\lesssim \sum_{(k,j)\in \mathscr{F}}\sigma_{1}(E_{j}^{k})
+\sum_{(k,j)\in \mathscr{F}}\sigma_{2}(E_{j}^{k})\\
 &\leq \sigma_{1}(P)+\sigma_{2}(P) \\
& \lesssim 1.
\end{align*}

\subsubsection*{The estimate for $N_{2}$:} 
To estimate $N_2$ we modify the argument for $J_2$.  By  the
definition of $\A_{\vec{p}(\cdot)}$ and by
Lemma~\ref{lemma:gen-holder} 
we have  that
   \begin{align*}
    &\frac{1}{|\Q|^{2}}\int_{\Q}{h_{2}\sigma_{1}\,dy}\int_{\Q}{h_{4}\sigma_{2}\,dy} \\
    & \quad \lesssim \|w\chi_{\Q}\|_{\pp}^{-1}
\prod_{l=1}^{2}
\|w_{l}^{-1}\chi_{\Q}\|_{p_{l}^{\prime}(\cdot)}^{-1}
\|h_{2}\|_{L_{\sigma_{1}}^{\pap}}\|h_{4}\|_{L_{\sigma_{2}}^{\pbp}}
\|\chi_{\Q}\|_{L_{\sigma_{1}}^{\cpap}}\|\chi_{\Q}\|_{L_{\sigma_{2}}^{\cpbp}}\\
    & \quad  =
      \|w\chi_{\Q}\|_{\pp}^{-1}\prod_{l=1}^{2}\|w_{l}^{-1}\chi_{\Q}\|_{p_{l}^{\prime}(\cdot)}^{-1}\|h_{2}\|_{L_{\sigma_{1}}^{\pap}}\|h_{4}\|_{L_{\sigma_{2}}^{\pbp}}\|w_{1}^{-1}\chi_{\Q}\|_{\cpap}
      \|w_{2}^{-1}\chi_{\Q}\|_{\cpbp};\\
\intertext{since
     $u(Q)\geq u(P_i)\geq 1$, by Lemma~\ref{lemma:mod-norm},}
    & \quad \lesssim\|w\chi_{\Q}\|_{\pp}^{-1} \\
   & \quad \leq c_0.
       \end{align*}
Therefore, by Lemma~\ref{lemma:p-infty-px},
   \begin{align*}
 N_{2}
&= \sum_{(k,j)\in \mathscr{G}}\int_{E_{j}^{k}}
\prod_{l=2,4}\langle h_{l}\sigma_{\rho(l)}\rangle_{\Q}^{p(x)}u(x)\,dx \\
  & \lesssim \sum_{(k,j)\in \mathscr{G}}
\int_{E_{j}^{k}}\bigg(\frac{c_0^{-1}}{|\Q|^{2}}
\int_{\Q}h_{2}\sigma_{1}\,dy \int_{\Q}h_{4}\sigma_{2}\,dy\bigg)^{p(x)}u(x)\,dx\\
  &\lesssim \sum_{(k,j)\in \mathscr{G}}
 \int_{E_{j}^{k}} \prod_{l=2,4}\langle
    h_{l}\sigma_{\rho(l)}\rangle_{\Q}^{p_{\infty}}u(x)\,dx
+ \sum_{(k,j)\in \mathscr{G}}\int_{E_{j}^{k}}\frac{u(x)}{(e+|x|)^{tnp_{-}}}\,dx.
   \end{align*}
By Lemma~\ref{lemma:infty-bound}, the second term on the last line is
bounded by a constant $1$.  We
estimate the first term using~\eqref{P_infty_Bounded}:
       \begin{align*}
       &\sum_{(k,j)\in \mathscr{G}}\int_{E_{j}^{k}}\prod_{l=2,4}
\langle h_{l}\sigma_{\rho(l)}\rangle_{\Q}^{p_{\infty}}u(x)\,dx \\
      & \quad  = \sum_{(k,j)\in
        \mathscr{G}}\int_{E_{j}^{k}}\prod_{l=2,4}
\langle h_{l}\rangle_{\sigma_{\rho(l)},\Q}^{p_{\infty}}
\sigma_{1}(\Q)^{p_{\infty}}\sigma_{2}(\Q)^{p_{\infty}}|\Q|^{-2p_{\infty}}u(x)\,dx
         \\
      & \quad \lesssim \sum_{(k,j)\in \mathscr{G}}
\langle h_{2}\rangle_{\sigma_{1},\Q}^{p_{\infty}}
\sigma_{1}(\Q)^{\frac{p_{\infty}}{(p_{1})_{\infty}}}
\langle h_{4}\rangle_{\sigma_{2},\Q}^{p_{\infty}}
\sigma_{2}(\Q)^{\frac{p_{\infty}}{(p_{2})_{\infty}}};\\
\intertext{By Young's inequality and
         Lemmas~\ref{lemma:wtd-max-bound},~\ref{lemma:p-infty-px},
         and~\ref{lemma:infty-bound},}
 & \quad \lesssim \sum_{(k,j)\in \mathscr{G}}
\langle h_{2}\rangle_{\sigma_{1},\Q}^{(p_{1})_{\infty}}\sigma_{1}(\Q)
+  \sum_{(k,j)\in \mathscr{G}}
\langle h_{4}\rangle_{\sigma_{2},\Q}^{(p_{2})_{\infty}}\sigma_{2}(\Q)\\
     & \quad \lesssim  \sum_{(k,j)\in \mathscr{G}}
\langle
       h_{2}\rangle_{\sigma_{1},\Q}^{(p_{1})_{\infty}}\sigma_{1}(E_{j}^{k})
+  \sum_{(k,j)\in \mathscr{G}}
\langle
       h_{4}\rangle_{\sigma_{2},\Q}^{(p_{2})_{\infty}}\sigma_{2}(E_{j}^{k})\\ 
   & \quad \leq
     \int_{\subRn}{M}_{\sigma_{1}}^{d}h_{2}(x)^{(p_{1})_{\infty}}\sigma_{1}(x)\,dx
+\int_{\subRn}{M}_{\sigma_{2}}^{d}h_{4}(x)^{(p_{2})_{\infty}}\sigma_{2}(x)\,dx\\
    & \quad \lesssim
      \int_{\subRn}h_{2}(x)^{(p_{1})_{\infty}}\sigma_{1}(x)\,dx
+ \int_{\subRn}h_{4}(x)^{(p_{2})_{\infty}}\sigma_{2}(x)\,dx\\
    & \quad \lesssim \int_{\subRn}h_{2}(x)^{p_{1}(x)}\sigma_{1}(x)\,dx
+\int_{\subRn}h_{4}(x)^{p_{2}(x)}\sigma_{2}(x)\,dx\\
      &\quad \qquad \quad +
        \int_{\subRn}\frac{\sigma_{1}(x)}{(e+|x|)^{tn(p_{1})_{-}}}\,dx
+\int_{\subRn}\frac{\sigma_{2}(x)}{(e+|x|)^{tn(p_{2})_{-}}}\,dx \\
& \quad \lesssim 1.
       \end{align*}

 \subsubsection*{The estimate for $N_{3}$:} 
The estimate for $N_3$ is broadly similar to the estimate for $J_3$
above, but it differs considerably in the details.   We first begin by
dividing the cubes in $\mathscr{H}$ into the sets  $\mathscr{H}_1$
and  $\mathscr{H}_2$ as before.  However, we now have to subdivide both of these
sets and not just $\mathscr{H}_2$.  Define
\[ \mathscr{H}_{1a} = \{ (k,j) \in \mathscr{H}_{1} : \sigma_1(\Q)\leq
  1, \sigma_2(\Q) \leq 1 \} \]
and
\[ \mathscr{H}_{1b} = \{ (k,j) \in \mathscr{H}_{1} : \sigma_1(\Q)>
  1, \sigma_2(\Q) \leq 1\}. \]

\medskip

The estimate for the sum over $\mathscr{H}_{1a}$ is similar to the
estimate over $\mathscr{H}_{1}$ above for $J_3$, but we use the fact
that both $h_2,\,h_4\leq 1$.  By Lemmas~\ref{lemma:p-infty-px}
and~\ref{lemma:infty-bound}, 
\begin{align*}
 &\sum_{(k,j)\in \mathscr{H}_{1a}} \int_{E_{j}^{k}}
\prod_{l=2,4}\langle h_{l}\sigma_{\rho(l)} \rangle_{\Q}^{p(x)}
   u(x)\,dx \\
 & \qquad \lesssim \sum_{(k,j)\in \mathscr{H}_{1a}} \int_{E_{j}^{k}}
\prod_{l=2,4}\langle h_{l}\sigma_{\rho(l)}
   \rangle_{\Q}^{p_{+}(\Q)} u(x)\,dx
+\sum_{(k,j)\in \mathscr{H}_{1a}}\int_{E_{j}^{k}}\frac{u(x)}{(e+|x|)^{tnp_{-}}}\,dx\\
   & \qquad  \leq \sum_{(k,j)\in \mathscr{H}_{1a}}
    \prod_{l=2,4}\langle h_{l}
     \rangle_{\sigma_{\rho(l)},\Q}^{p_{+}(\Q)}  
\int_{E_{j}^{k}}|\Q|^{-2 p_{+}(\Q)} \prod_{l=2,4}\sigma_{\rho(l)}(\Q)^{p_{+}(\Q)} u(x)\,dx + 1.
\intertext{Since $h_2,\, h_4\leq 1$ and $\sigma_l(\Q)\leq 1$,
     $l=1,\,2$, by \eqref{EQ-I1estimte}, replacing
     $|Q_j^k|^{-2p(x)}$ with $|Q_j^k|^{-2p_+(Q_j^k)}$ (which we can do
     by Lemma~\ref{lemma:diening}),}
  & \qquad  \leq \sum_{(k,j)\in \mathscr{H}_{1a}}
    \prod_{l=2,4}\langle h_{l}
     \rangle_{\sigma_{\rho(l)},\Q}^{q(\Q)}  
\int_{E_{j}^{k}}|\Q|^{-2 p_{+}(\Q)}
    \prod_{l=2,4}\sigma_{\rho(l)}(\Q)^{q(\Q)} u(x)\,dx + 1 \\
& \qquad \lesssim \sum_{(k,j)\in \mathscr{H}_{1a}}
\prod_{l=2,4}\langle h_{l}\rangle_{\sigma_{\rho(l),\Q}}^{q(\Q)}
\sigma_{1}(\Q)^{\frac{q(\Q)}{(p_{1})_{-}(\Q)}}\sigma_{2}(\Q)^{\frac{q(\Q)}{(p_{2})_{-}(\Q)}}
  + 1;\\
\intertext{by Young's inequality,}
& \qquad \lesssim \sum_{(k,j)\in \mathscr{H}_{1a}}
\langle
  h_{2}\rangle_{\sigma_{1},\Q}^{(p_{1})_{-}}
\sigma_{1}(\Q)
+ \sum_{(k,j)\in \mathscr{H}_{1a}}
\langle h_{4}\rangle_{\sigma_{2},\Q}^{(p_{2})_{-}(\Q)}
\sigma_{2}(\Q) + 1.
\end{align*}
Both of the final terms are estimated as $K_2$ above.

\medskip

To estimate the sum over $\mathscr{H}_{1b}$, we first define the
exponent $s(\Q)$ by
  \begin{equation*}
  \frac{1}{s(\Q)}=\frac{1}{(p_1)_{\infty}}+\frac{1}{(p_2)_{+}(\Q)}.
  \end{equation*}
Then, arguing as we did for~\eqref{eqn:p-J3-est}, we get that for $x\in \Q$, 
\[ \bigg|\frac{1}{p(x)}-\frac{1}{s(\Q)}\bigg|
\leq \bigg|\frac{1}{p_1(x)}-\frac{1}{(p_1)_\infty}\bigg|
+ \bigg|\frac{1}{p_2(x)}-\frac{1}{(p_2)_+(\Q)}\bigg| 
\lesssim \frac{1}{\log(e+|x|)}.  \]
Given this, by~\eqref{H2-first-estimate} (for $h_2$ instead of $h_1$),~\eqref{H2-second-estimate}
and Lemma~\ref{lemma:p-infty-px},
\begin{align*}
 &\sum_{(k,j)\in \mathscr{H}_{1b}} \int_{E_{j}^{k}}\prod_{l=2,4}
\langle h_{l}\sigma_{\rho(l)}\rangle_{\Q}^{p(x)}u(x)\,dx \\
 &  \quad \quad \lesssim \sum_{(k,j)\in
   \mathscr{H}_{1b}}\int_{E_{j}^{k}}
\prod_{l=2,4}\bigg(c_0^{-1}\|w_{\rho(l)}^{-1}\chi_{\Q}\|_{p_{\rho(l)}^{\prime}(\cdot)}^{-1}
\int_{\Q}h_{l}\sigma_{\rho(l)}\,dy\bigg)^{s(\Q)} \\
& \qquad \qquad \qquad \qquad \qquad 
\times 
\prod_{l=1}^{2}\bigg(\frac{\|w_{l}^{-1}\chi_{\Q}\|_{p_{l}^{\prime}(\cdot)}}
{|\Q|}\bigg)^{p(x)}u(x)\,dx \\
 & \qquad \qquad \qquad  + 
\sum_{(k,j)\in \mathscr{H}_{1b}} \int_{E_{j}^{k}} \prod_{l=1}^{2}
\bigg(\frac{\|w_{l}^{-1}\chi_{\Q}\|_{p_{l}^{\prime}(\cdot)}}{|\Q|}\bigg)^{p(x)}
\frac{u(x)}{(e+|x|)^{tnp_{-}}}\,dx \\
 & \quad \quad =R_{1}+ R_{2}.
\end{align*}

The estimate for $R_2$ is identical to the estimate for $L_2$.  To
estimate $R_1$, we again use \eqref{eqn:p-infty-Hest} for $\sigma_1$,
replacing $p_\infty$ with $s(\Q)$. 
Because $\sigma_2(\Q)<1$ we use a different estimate.   Since
$(p_2')_-(\Q)=(p_2)_+(\Q)^{'}$,  by Lemma~\ref{lemma:mod-norm},

\begin{equation*}
\bigg(\frac{\sigma_2(\Q)}{\|w_2^{-1}\chi_\Q\|_{p_2'(\cdot)}}\bigg)^{s(\Q)}
 \leq  \bigg(\sigma_2(\Q)^{1-\frac{1}{[(p_2)_+]'(\Q)}}  \bigg)^{s(\Q)}
 = \sigma_2(\Q)^{\frac{s(\Q)}{(p_2)_+(\Q)}}.  
\end{equation*}
We can now argue as in the estimate of $L_1$ to get
\begin{align*}
    R_{1}&\lesssim \sum_{(k,j)\in
           \mathscr{H}_{1b}}\int_{E_{j}^{k}}{\prod_{l=2,4}
\langle h_{l} \rangle_{\sigma_{\rho(l)},\Q}^{s(\Q)}
\sigma_{1}(\Q)^{\frac{s(\Q)}{(p_1)_{\infty}}}
\sigma_{2}(\Q)^{\frac{s(\Q)}{(p_{2})_+(\Q)}}}\\
    & \qquad \qquad \qquad \qquad  \times
      \prod_{J=1}^{2}\bigg(\frac{\|w_{J}^{-1}\chi_{\Q}
      \|_{p_{J}^{\prime}(\cdot)}}{|\Q|}\bigg)^{p(x)}u(x)\,dx \\
    & \leq \sum_{(k,j)\in \mathscr{H}_{1b}}
\prod_{l=2,4}\langle h_{l} \rangle_{\sigma_{\rho(l)},\Q}^{s(\Q)}
\sigma_{1}(\Q)^{\frac{s(\Q)}{(p_1)_{\infty}}}
\sigma_{2}(\Q)^{\frac{s(\Q)}{(p_{2})_+(\Q)}}\\
    &  \qquad \qquad \qquad \qquad
      \times\int_{\Q}{\prod_{J=1}^{2}\|w_{J}^{-1}\chi_{\Q} 
\|_{p_{J}^{\prime}(\cdot)}^{p(x)}|\Q|^{-2p(x)}u(x)\,dx}\\
    &\lesssim \sum_{(k,j)\in \mathscr{H}_{1b}}
\langle h_{2}\rangle_{\sigma_{1},\Q}^{(p_{1})_{\infty}}\sigma_{1}(\Q)
+\sum_{(k,j)\in \mathscr{H}_{1b}}\langle
      h_{4}\rangle_{\sigma_{2},\Q}^{(p_{2})_+(Q_j^k)}
\sigma_{2}(E_j^k).
\end{align*}     

The estimate for the first term in the last line is the same as the
estimate  for the $h_4$ term in  $J_{2}$.  Arguing as we did
for~\eqref{eqn:p-J3-est} and~\eqref{eqn:p2-J3-est},
\[  |(p_2)_+(Q_j^k) - p_\infty| \lesssim \frac{1}{\log(e+|x|)}.  \]
Then, since $\langle h_{4}\rangle_{\sigma_{1},\Q}\leq 1$, the estimate for
 the second term follows by
 ~\eqref{eqn:p2-J3-est},
 and  by Lemmas~\ref{lemma:p-infty-px}, \ref{lemma:wtd-max-bound} and \ref{lemma:infty-bound}:
    \begin{align*}
& \sum_{(k,j)\in \mathscr{H}_{1b}}\langle
      h_{4}\rangle_{\sigma_{2},\Q}^{(p_{2})_+(\Q)}
\sigma_{2}(E_j^k) \\
 & \qquad \qquad  \lesssim \sum_{(k,j)\in
   \mathscr{H}_{1b}}\int_{E_{j}^{k}}\langle
   h_{4}\rangle_{\sigma_{2},\Q}^{(p_{2})_\infty}\sigma_{2}(x)\,dx+
   \int_{\subRn}{\frac{\sigma_2(x)}{(e+\abs{x})^{tn(p_2)_-}}\,dx}+1\\
& \qquad \qquad  \lesssim \sum_{(k,j)\in
   \mathscr{H}_{1b}}\int_{E_{j}^{k}}\langle
   h_{4}\rangle_{\sigma_{2},\Q}^{(p_{2})_\infty}\sigma_{2}(x)\,dx+ 1.
    \end{align*}
Again, we estimate this last sum as in the final estimate for $J_2$.

\medskip

To estimate the sum over $\mathscr{H}_{2}$ we argue as we did before
for $J_3$,
dividing it into 
sums over $\mathscr{H}_{2a}$ and $\mathscr{H}_{2b}$.   
The estimate over $\mathscr{H}_{2a}$ is identical to  the estimate
over this set as before, replacing $h_1$ by $h_2$.  This yields
terms just like $L_1$ and $L_2$ above.  The estimate for the $L_2$
term is the same, as is the estimate for the $L_1$ term,  except
that in the final line the $h_2$ term is estimated like the $h_4$ term
since both $h_2,\,h_4 \leq 1$.  

\medskip

To estimate the sum over $\mathscr{H}_{2b}$, we can argue as before,
getting terms like $M_1$ and $M_2$, replacing $h_1$ by $h_2$.  The
estimate of the $M_2$ term is again the same.  To estimate the $M_1$
term we argue as before 
except we replace the exponent $r(\Q)$ by $p_\infty$.   But then the
final line of the estimate becomes
\[ \sum_{(k,j)\in \mathscr{H}_{2b}}
\langle h_{1}\rangle_{\sigma_{1},\Q}^{(p_{1})_\infty}\sigma_{1}(\Q)
+\sum_{(k,j)\in \mathscr{H}_{2b}}\langle
      h_{4}\rangle_{\sigma_{2},\Q}^{(p_{2})_{\infty}}
\sigma_{2}(\Q), \]
and both of these sums are estimated like the final estimate for
$J_2$.  This completes the estimate for $N_3$ and so of $I_4$.  This
completes the proof of Theorem~\ref{thm:main}.

\section{Proof of Theorem~\ref{thm:sio}}
\label{section:proof-sio}

Theorem~\ref{thm:sio} follows almost directly from
Theorem~\ref{thm:main}.  To prove it, we will need two estimates for
the Fefferman-Stein sharp maximal operator and an extrapolation
theorem in the scale of weighted variable Lebesgue spaces.  We first
recall the definition of the sharp maximal operator:  given
$f\in L^1_{loc}$, let
\[ M^\# f(x) = \sup_Q \avgint_Q |f(y)-\langle f \rangle_Q|\,dy\,
  \chi_Q(x), \]
where the supremum is taken over all cubes $Q$.   For $\delta>0$,
define $M^\#_\delta f(x) = M^\#(|f|^\delta)(x)^{\frac{1}{\delta}}$.
The first estimate relates the norm of $f$ and $M^\#$.  For a proof,
see Journ\'e~\cite{MR706075} or~\cite{cruz-uribe-martell-perez04}. 

\begin{proposition} \label{prop:wtd-sharp}
Given $w\in A_\infty$, $0<p<\infty$, and $0<\delta<1$, 
\[ \|f\|_{L^p_w} \lesssim \|M^\#_\delta f\|_{L^p_w}. \]
The implicit constant depends on $p$, $n$, $\delta$ and $w$. 
\end{proposition}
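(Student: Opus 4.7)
The plan is to reduce the case of general $\delta \in (0,1)$ to the case $\delta = 1$ by rescaling, and then to prove the $\delta = 1$ case by the classical Muckenhoupt-Wheeden good-$\lambda$ method. Since $|f(x)| \leq Mf(x)$ for almost every $x$ by Lebesgue differentiation, it suffices to establish
\[ \|Mf\|_{L^p_w} \lesssim \|M^\# f\|_{L^p_w}. \]
Once this is known for all $0 < p < \infty$ and all $w \in A_\infty$, I would apply it to $|f|^\delta$ with the exponent $p/\delta$ in place of $p$ (both are positive) to obtain
\[ \|f\|_{L^p_w}^{\delta} = \bigl\| |f|^\delta \bigr\|_{L^{p/\delta}_w} \leq \bigl\| M(|f|^\delta) \bigr\|_{L^{p/\delta}_w} \lesssim \bigl\| M^\#(|f|^\delta) \bigr\|_{L^{p/\delta}_w} = \|M^\#_\delta f\|_{L^p_w}^{\delta}, \]
whence the claimed inequality on raising both sides to the $1/\delta$.

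The heart of the argument is the good-$\lambda$ inequality: for $w \in A_\infty$ there exist constants $c = c(n,w)$ and $\theta = \theta(w) > 0$ such that for every $\lambda > 0$ and every sufficiently small $\gamma > 0$,
\[ w\bigl(\{Mf > 2\lambda,\ M^\# f \leq \gamma\lambda\}\bigr) \leq c\gamma^{\theta}\, w\bigl(\{Mf > \lambda\}\bigr). \]
To prove this I would apply a Calder\'on-Zygmund decomposition to the open set $\Omega_\lambda = \{Mf > \lambda\}$, producing disjoint maximal dyadic cubes $\{Q_j\}$ with $\Omega_\lambda = \bigcup_j Q_j$ and $\avg_{Q_j} \approx \lambda$, where $g=|f|$. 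On each $Q_j$ one shows the Lebesgue-measure estimate
\[ \bigl|\{x \in Q_j : Mf(x) > 2\lambda,\ M^\# f(x) \leq \gamma\lambda\}\bigr| \leq c\gamma |Q_j|, \]
by splitting $f = f\chi_{2Q_j} + f\chi_{(2Q_j)^c}$; the far part is controlled pointwise on $Q_j$ by the Calder\'on-Zygmund selection, while the local part is handled via the weak-$(1,1)$ bound for $M$ applied to $f - \langle f \rangle_{2Q_j}$, whose $L^1$ average over $2Q_j$ is dominated by $M^\# f$. The $A_\infty$ property of $w$ recorded in Lemma~\ref{lemma:Ainfty-prop} then upgrades this Lebesgue-measure estimate to $w(\cdot) \leq c\gamma^{\theta} w(Q_j)$; summing over $j$ gives the good-$\lambda$ inequality.

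With the good-$\lambda$ inequality in hand, I would integrate against the measure $p\lambda^{p-1}\,d\lambda$, split the resulting integral at level $2\lambda$, and absorb the $Mf$ term on the left. The main obstacle is precisely this absorption step: it requires the a priori finiteness of $\|Mf\|_{L^p_w}$, which in general one does not have. This is handled by the standard truncation device of replacing $Mf$ with $\min(Mf, N)$, carrying through the inequality with constants independent of $N$, and then letting $N \to \infty$ via monotone convergence. Apart from this technicality, the only conceptually nontrivial input is the good-$\lambda$ inequality itself, which is where the $A_\infty$ hypothesis enters in an essential way.
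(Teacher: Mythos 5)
Your proposal follows the classical Fefferman--Stein good-$\lambda$ route, which is exactly the approach in Journ\'e's lecture notes, one of the two references the paper cites for this result; the other cited reference, \cite{cruz-uribe-martell-perez04}, obtains the same estimate by $A_\infty$-extrapolation from the case $p>1$. The reduction from $M^\#_\delta$ to $M^\#$ is correct: indeed $\| |f|^\delta \|_{L^{p/\delta}_w} = \|f\|_{L^p_w}^\delta$ and $\|M^\#(|f|^\delta)\|_{L^{p/\delta}_w} = \|M^\#_\delta f\|_{L^p_w}^\delta$, so applying the $\delta=1$ estimate with exponent $p/\delta$ to $|f|^\delta$ gives precisely the claim. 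The local/far decomposition $f = f\chi_{2Q_j} + f\chi_{(2Q_j)^c}$, together with the Whitney property of the cubes $Q_j$ covering $\{Mf>\lambda\}$ and the weak-$(1,1)$ bound for $M$, is the standard way to get the Lebesgue-measure good-$\lambda$ estimate, which Lemma~\ref{lemma:Ainfty-prop} upgrades to a weighted one. One small inaccuracy: the Whitney cubes of $\{Mf>\lambda\}$ need not satisfy $\avg_{Q_j}|f| \approx \lambda$; what one actually uses is that each $Q_j$ lies within a bounded dilate of a point where $Mf \le \lambda$.

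The one genuine weak point is the absorption step, which you flag but do not fully resolve. Replacing $Mf$ by $\min(Mf,N)$ is \emph{not} by itself enough to guarantee finiteness of $\|\min(Mf,N)\|_{L^p_w}$ when $w(\R^n)=\infty$ (e.g.\ $f\equiv 1$ gives $M^\#f\equiv 0$ but $\min(Mf,N)\equiv 1$, so the inequality is simply false). One must impose an a priori hypothesis --- say $Mf\in L^{p_0}(w)$ for some $p_0<\infty$, or $f\in L^\infty_c$ --- and truncate both in height and in the domain before passing to the limit. The statement of the proposition as given in the paper omits this hypothesis too, but in the only place it is applied (the proof of Theorem~\ref{thm:sio}), it is applied to $\min(|T(f_1,f_2)|,N)\chi_{B(0,N)} \in L^\infty_c$, so the hypothesis is automatic. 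You should make the a priori assumption explicit when invoking the good-$\lambda$ argument.
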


The second estimate is a pointwise inequality  proved in~\cite{MR2483720}.

\begin{proposition} \label{prop:sharp-est}
Given $0<\delta<1/2$  and a bilinear Calder\'on-Zygmund singular integral $T$, for all $f_1,\,f_2\in L^\infty_c$, 
\[ M^\#_\delta (T(f_1,f_2))(x) \lesssim \M(f_1,f_2)(x). \]
The implicit constant depends only on $T$, $\delta$ and $n$.
\end{proposition}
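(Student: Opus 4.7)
The plan is to follow the bilinear adaptation of the classical Fefferman--Stein argument. Fix $x \in \R^n$ and a cube $Q$ containing $x$. By the equivalence of the sharp maximal function with its ``infimum over constants'' form, together with the elementary inequality $\big||a|^{\delta}-|b|^{\delta}\big|\le |a-b|^{\delta}$ valid for $0<\delta<1$, it suffices to produce a constant $c_{Q}$ (allowed to depend on $Q$, but not on the point of averaging) such that
\[
 \bigg(\avgint_{Q}|T(f_{1},f_{2})(z)-c_{Q}|^{\delta}\,dz\bigg)^{1/\delta}
 \lesssim \M(f_{1},f_{2})(x),
\]
with constants independent of $Q$. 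Taking the supremum over all cubes $Q\ni x$ then yields the pointwise bound.

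Decompose each input as $f_{j}=f_{j}^{0}+f_{j}^{\infty}$ with $f_{j}^{0}=f_{j}\chi_{3Q}$, and expand by multilinearity to obtain four pieces $T(f_{1}^{0},f_{2}^{0})$, $T(f_{1}^{0},f_{2}^{\infty})$, $T(f_{1}^{\infty},f_{2}^{0})$, $T(f_{1}^{\infty},f_{2}^{\infty})$. The ``good constant'' I would choose is $c_{Q}:=T(f_{1}^{\infty},f_{2}^{\infty})(x_{Q})$ with $x_{Q}$ the center of $Q$; this choice kills the singularity of the last piece and leaves only a smooth remainder.

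For the local-local piece $T(f_{1}^{0},f_{2}^{0})$ I would use Kolmogorov's inequality: the assumed $L^{p}\times L^{q}\to L^{r}$ boundedness, combined with the Calder\'on--Zygmund decomposition adapted to the bilinear setting, yields the weak-type endpoint $T:L^{1}\times L^{1}\to L^{1/2,\infty}$, and because $\delta<1/2$ the Kolmogorov inequality gives
\[
 \bigg(\avgint_{Q}|T(f_{1}^{0},f_{2}^{0})|^{\delta}\,dz\bigg)^{1/\delta}
 \lesssim \avgint_{3Q}|f_{1}|\,dy\cdot\avgint_{3Q}|f_{2}|\,dy
 \lesssim \M(f_{1},f_{2})(x).
\]
For the two mixed pieces, say $T(f_{1}^{0},f_{2}^{\infty})$, I would use the kernel size estimate, partition $(3Q)^{c}$ into dyadic annuli $A_{k}=2^{k+1}Q\setminus 2^{k}Q$, use $|z-y_{2}|\approx 2^{k}\ell(Q)$ for $z\in Q$ and $y_{2}\in A_{k}$, and sum a geometric series that is dominated term-by-term by $\M(f_{1},f_{2})(x)$; the case $T(f_{1}^{\infty},f_{2}^{0})$ is symmetric. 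For the global-global remainder $T(f_{1}^{\infty},f_{2}^{\infty})(z)-c_{Q}$, I would apply the H\"older regularity estimate of the kernel, obtaining an extra factor $|z-x_{Q}|^{\varepsilon}\lesssim \ell(Q)^{\varepsilon}$ (where $\varepsilon$ is the kernel smoothness exponent, not the $\delta$ of the statement) against $(2^{k}\ell(Q))^{-(2n+\varepsilon)}$, producing a convergent dyadic sum again controlled by $\M(f_{1},f_{2})(x)$.

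The main technical point is the endpoint estimate $T:L^{1}\times L^{1}\to L^{1/2,\infty}$ used in the local-local term; it is exactly this weak-type bound that forces the restriction $\delta<1/2$ (Kolmogorov requires $\delta$ strictly below the weak-type exponent). The other three terms are essentially computations with the dyadic annulus decomposition and require only the size and regularity hypotheses on $K$; they do not use any boundedness hypothesis on $T$ beyond its existence as a bilinear Calder\'on--Zygmund operator. Summing the four contributions gives the claim.
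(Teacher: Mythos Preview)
The paper does not prove Proposition~\ref{prop:sharp-est}; it simply cites the result from Lerner \emph{et al.}~\cite{MR2483720}. Your outline follows the standard Fefferman--Stein scheme used there, and the treatment of the local--local term via Kolmogorov and the weak $L^1\times L^1\to L^{1/2,\infty}$ bound is exactly right, as is your identification of why the restriction $\delta<1/2$ arises.

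There is, however, a genuine gap in your handling of the mixed terms. With the choice $c_Q=T(f_1^\infty,f_2^\infty)(x_Q)$ you are \emph{not} subtracting anything from $T(f_1^0,f_2^\infty)$, and the size estimate alone does not give a convergent series controlled by $\M(f_1,f_2)(x)$. Indeed, for $z\in Q$, $y_1\in 3Q$ and $y_2$ in the $k$-th dyadic annulus, the kernel bound yields a contribution comparable to
\[
\frac{1}{(2^{k}\ell(Q))^{2n}}\int_{3Q}|f_1|\int_{2^{k+1}Q}|f_2|
\;\lesssim\;\avgint_{2^{k+1}Q}|f_1|\,\avgint_{2^{k+1}Q}|f_2|\;\le\;\M(f_1,f_2)(x),
\]
with no extra decay in $k$; the sum over $k$ diverges. (The would-be gain $2^{-kn}$ from the small support of $f_1^0$ is exactly cancelled when you enlarge $\avgint_{3Q}|f_1|$ to $\avgint_{2^{k+1}Q}|f_1|$ in order to produce a \emph{bilinear} average over a common cube.)

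The fix, and this is what is done in~\cite{MR2483720}, is to take
\[
c_Q \;=\; T(f_1^0,f_2^\infty)(x_Q)+T(f_1^\infty,f_2^0)(x_Q)+T(f_1^\infty,f_2^\infty)(x_Q),
\]
i.e.\ include the mixed pieces in the constant as well. Then for each of the three ``non-local-local'' terms one estimates the difference $T(\cdots)(z)-T(\cdots)(x_Q)$ using the H\"older regularity of $K$ in the first variable, which supplies the missing factor $(\ell(Q)/2^{k}\ell(Q))^{\varepsilon}$ and makes the dyadic sum geometric. Your last paragraph already contains this argument for the $\infty\infty$ piece; it must be applied to the mixed pieces too.
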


To apply these results we need to extend
Proposition~\ref{prop:wtd-sharp} to the scale of variable Lebesgue
spaces.  The following result was proved
in~\cite[Theorem~2.25]{CruzUribeSFO:2017km}.  The hypotheses are
somewhat technical, but they are the right generalization to prove
$A_\infty$ extrapolation~\cite{cruz-uribe-martell-perez04} in
this setting.   The result is stated in the abstract language of
extrapolation pairs; for more on this approach to Rubio de Francia
extrapolation, see~\cite{MR2797562}.  

\begin{proposition} \label{prop:Ainfty-extrapol}
Suppose for some $0<p<\infty$ and every $w_0\in A_\infty$, 
\begin{equation} \label{eqn:extrapol1}
\|f\|_{L^p_{w_0}} \lesssim \|g\|_{L^p_{w_0}} 
\end{equation}
for every pair of functions $(f,g)$ in a family $\F$ such that
$\|f\|_{L^p(w)}<\infty$.  Given $\pp \in \Pp_0$, suppose there
exists $s\leq p_-$ such that $w^s \in \A_{\pp/s}$ and the maximal
operator is bounded on $L^{(\pp/s)'}(w^{-s})$.  Then for $(f,g)\in \F$
such that $\|f\|_{L^\pp(w)}<\infty$,
\[ \|f\|_{L^\pp(w)} \lesssim \|g\|_{L^\pp(w)}. \]
\end{proposition}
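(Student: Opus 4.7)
The plan is to adapt the standard Rubio de Francia extrapolation algorithm to the weighted variable Lebesgue setting via rescaling, duality, and an iteration against the maximal operator on the associated dual space. I would first upgrade the scalar hypothesis at exponent $p$ to the same estimate at every exponent (in particular at $s$) using the classical $A_\infty$ extrapolation theorem of Cruz-Uribe, Martell and P\'erez, and then convert this into the desired variable Lebesgue estimate via a Rubio de Francia iterate that produces a suitable $A_1$ weight.

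In detail, I would fix $(f,g)\in \F$ with $\|f\|_{L^\pp(w)}<\infty$.  Since $s\leq p_-$, the rescaled exponent satisfies $(\pp/s)_-\geq 1$, so $L^{\pp/s}(w^s)$ is a Banach function space, and Lemma~\ref{lemma:rescale} gives $\|f\|_{L^\pp(w)}^s = \|f^s\|_{L^{\pp/s}(w^s)}$.  By the weighted dual pairing from Lemma~\ref{lemma:dual} together with its subsequent remark, one reduces to bounding $\int_\subRn f^s h\,dx$ for an arbitrary nonnegative $h$ with $\|hw^{-s}\|_{(\pp/s)'}\leq 1$.  Letting $N$ denote the operator norm of $M$ on $L^{(\pp/s)'}(w^{-s})$, I form the Rubio de Francia iterate
\[ Rh = \sum_{k=0}^\infty (2N)^{-k} M^k h, \]
which satisfies (i) $h\leq Rh$ pointwise, (ii) $\|Rh\cdot w^{-s}\|_{(\pp/s)'} \leq 2$, and (iii) $M(Rh) \leq 2N\cdot Rh$, so that $Rh\in A_1\subset A_\infty$ with constant depending only on $N$.

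Now invoke $A_\infty$ extrapolation to move the scalar hypothesis from exponent $p$ to exponent $s$: for every $w_0\in A_\infty$ and every $(f,g)\in \F$, $\int f^s w_0\,dx \lesssim \int g^s w_0\,dx$.  Taking $w_0 = Rh$ and applying H\"older's inequality in the variable Lebesgue spaces (Lemma~\ref{lemma:holder}) gives
\[ \int_\subRn g^s Rh\,dx = \int_\subRn (g^s w^s)(Rh\cdot w^{-s})\,dx \lesssim \|g^s\|_{L^{\pp/s}(w^s)}\|Rh\cdot w^{-s}\|_{(\pp/s)'} \lesssim \|g\|_{L^\pp(w)}^s. \]
Combined with (i), this yields $\int f^s h\,dx \leq \int f^s Rh\,dx \lesssim \|g\|_{L^\pp(w)}^s$; taking the supremum over admissible $h$ and an $s$-th root finishes the argument.

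The main obstacle is verifying that the Rubio de Francia construction is well-posed in this weighted variable setting: one needs each $M^k h$ to be finite a.e., the series defining $Rh$ to converge in $L^{(\pp/s)'}(w^{-s})$, and the $A_1$ constant of $Rh$ to depend only on $N$ rather than on $h$.  All three follow from the boundedness hypothesis on $M$, together with a truncation argument to handle the possibility that the initial $h$ fails to be locally integrable.  The hypothesis $w^s \in \A_{\pp/s}$ is invoked implicitly in the duality step, where it guarantees that the pairing realizes the norm on $L^{\pp/s}(w^s)$ sharply and that the associate space is indeed $L^{(\pp/s)'}(w^{-s})$.  A final point to confirm is that the Cruz-Uribe--Martell--P\'erez $A_\infty$ extrapolation applies in the extrapolation-pair framework used here, which is the formulation in which that theorem is typically stated.
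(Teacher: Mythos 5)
The paper does not prove this proposition; it is imported verbatim from~\cite[Theorem~2.25]{CruzUribeSFO:2017km}, where the proof is precisely the Rubio de Francia scheme you have laid out: rescale by $s$, dualize, build the iterate $Rh$, downward-extrapolate the scalar hypothesis to exponent $s$ using classical $A_\infty$ extrapolation, and close with H\"older. So your approach matches the actual source and is correct in outline.

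Two small clarifications are worth making explicit, since you flag them only loosely. First, the role of the hypothesis $w^s\in\A_{\pp/s}$ is not to make the duality work: the identity $\|f^s\|_{L^{\pp/s}(w^s)}\approx\sup\{\int f^s h\,dx : \|hw^{-s}\|_{(\pp/s)'}\le 1\}$ (the remark after Lemma~\ref{lemma:dual}) holds for arbitrary weights, with no $\A$-type condition. What the argument genuinely needs is the boundedness of $M$ on $L^{(\pp/s)'}(w^{-s})$; the condition $w^s\in\A_{\pp/s}$ is then automatic, being equivalent to $w^{-s}\in\A_{(\pp/s)'}$, which is necessary for that boundedness. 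Second, to legitimately apply the $s$-exponent estimate to the pair $(f,g)$ with the weight $w_0=Rh$, you must also confirm $\int f^s\,Rh\,dx<\infty$ (the classical extrapolation theorem carries a finiteness proviso). This is obtained by running your H\"older step on $f$ rather than $g$, together with the standing hypothesis $\|f\|_{L^\pp(w)}<\infty$ and the bound $\|Rh\cdot w^{-s}\|_{(\pp/s)'}\le 2$. Finally, for the argument to be uniform in $h$, the constant in the extrapolated scalar inequality must depend only on $[w_0]_{A_\infty}$; your observation that $[Rh]_{A_1}\le 2N$ with $N$ independent of $h$ is exactly what supplies that uniformity, and it deserves to be stated as a needed input rather than only as a property of $Rh$.
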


\begin{proof}[Proof of Theorem~\ref{thm:sio}]
Fix $\vecpp$ as in the hypotheses and $\vec{w}\in \A_\vecpp$.  Since
$(p_j)_->1$, $p_->1/2$.  Let $s=1/2$; then by
Proposition~\ref{AP-charachterization} $w^s \in \A_{\pp/s}$, so
$w^{-s} \in \A_{(\pp/s)'}$.  Since $\pp \in LH$, so is $(\pp/s)'$.
Thus, by the weighted bounds for the maximal operator on variable
Lebesgue spaces (see~\cite{dcu-f-nPreprint2010}), $M$ is bounded on
$L^{(\pp/s)'}(w^{-s})$.  Therefore, the main hypothesis of
Proposition~\ref{prop:Ainfty-extrapol} holds.  

Fix $0<\delta<1/2$ and define the family of extrapolation pairs
\[ \F = \big\{ \big( \min(|T(f_1,f_2)|,N)\chi_{B(0,N)},
  M_\delta^\#(T(f_1,f_2))\big) : f_1,\, f_2 \in L^\infty_c, N>1
  \big\}. \]
Since 
\[ \min(|T(f_1,f_2)|,N)\chi_{B(0,N)} \in L^\infty_c \subset
L^p_{w_0} \]
 for any $p>0$ and $w_0\in A_\infty$, it follows
from Proposition~\ref{prop:wtd-sharp} that~\eqref{eqn:extrapol1} holds
for every pair in $\F$.   Similarly, we have
\[ \min(|T(f_1,f_2)|,N)\chi_{B(0,N)} \in L^\pp(w), \]
and so by
Propositions~\ref{prop:Ainfty-extrapol} and~\ref{prop:sharp-est},
\[ \|\min(|T(f_1,f_2)|,N)\chi_{B(0,N)} \|_{L^\pp(w)}
\lesssim \| M_\delta^\#(T(f_1,f_2))\|_{L^\pp(w)} 
\lesssim \|\M(f_1,f_2)\|_{L^\pp(w)}.  \]
If we take the limit as $N\rightarrow \infty$, then by Fatou's lemma (Lemma~\ref{lemma:fatou})
and Theorem~\ref{thm:main},
\[ \|T(f_1,f_2) \|_{L^\pp(w)}
\lesssim \|f_1\|_{L^\pap(w_1)} \|f_2 \|_{L^\pbp(w_2)}. \]
The desired conclusion now follows by a standard approximation
argument since $L^\infty_c$ is dense in 
$L^{p_j(\cdot)}(w_j)$,
$j=1,\,2$~\cite[Lemma~3.1]{CruzUribeSFO:2017km}. 
\end{proof}

\bibliographystyle{plain}

\bibliography{bilinear-maxvar}

\begin{thebibliography}{10}

\bibitem{MR1622773}
E.~I. Berezhno{\u\i}.
\newblock Two-weighted estimations for the {H}ardy-{L}ittlewood maximal
  function in ideal {B}anach spaces.
\newblock {\em Proc. Amer. Math. Soc.}, 127(1):79--87, 1999.

\bibitem{capone-cruz-fio}
C.~Capone, D.~Cruz-Uribe, and A.~Fiorenza.
\newblock The fractional maximal operator and fractional integrals on variable
  {$L^p$} spaces.
\newblock {\em Rev. Mat. Iberoam.}, 23(3):743--770, 2007.

\bibitem{CruzUribe:2016ji}
D.~Cruz-Uribe.
\newblock Two weight inequalities for fractional integral operators and
  commutators.
\newblock In {\em Advanced Courses of Mathematical Analysis {VI}}, pages
  25--85. World Sci. Publ., Hackensack, NJ, 2017.

\bibitem{cruz-diening-hasto2011}
D.~Cruz-Uribe, L.~Diening, and P.~H\"ast\"o.
\newblock The maximal operator on weighted variable {L}ebesgue spaces.
\newblock {\em Frac. Calc. Appl. Anal.}, 14(3):361--374, 2011.

\bibitem{cruz-fiorenza-book}
D.~Cruz-Uribe and A.~Fiorenza.
\newblock {\em Variable Lebesgue Spaces: Foundations and Harmonic Analysis}.
\newblock Birkh\"auser, Basel, 2013.

\bibitem{MR1976842}
D.~Cruz-Uribe, A.~Fiorenza, and C.~J. Neugebauer.
\newblock The maximal function on variable {$L\sp p$} spaces.
\newblock {\em Ann. Acad. Sci. Fenn. Math.}, 28(1):223--238, 2003.

\bibitem{dcu-f-nPreprint2010}
D.~Cruz-Uribe, A.~Fiorenza, and C.~J. Neugebauer.
\newblock Weighted norm inequalities for the maximal operator on variable
  {L}ebesgue spaces.
\newblock {\em J. Math. Anal. Appl.}, 394(2):744--760, 2012.

\bibitem{cruz-uribe-martell-perez04}
D.~Cruz-Uribe, J.~M. Martell, and C.~P{\'e}rez.
\newblock Extrapolation from {$A\sb \infty$} weights and applications.
\newblock {\em J. Funct. Anal.}, 213(2):412--439, 2004.

\bibitem{MR2797562}
D.~Cruz-Uribe, J.~M. Martell, and C.~P{\'e}rez.
\newblock {\em Weights, extrapolation and the theory of {R}ubio de {F}rancia},
  volume 215 of {\em Operator Theory: Advances and Applications}.
\newblock Birkh\"auser/Springer Basel AG, Basel, 2011.

\bibitem{DCU-KM-HN}
D.~{Cruz-Uribe}, K.~{Moen}, and H.~{Van Nguyen}.
\newblock {The boundedness of multilinear {C}alder\'on-{Z}ygmund operators on
  weighted and variable {H}ardy spaces}.
\newblock {\em preprint}, 2017.
\newblock arXiv:1708.07195.

\bibitem{CruzUribe:2016wv}
D.~Cruz-Uribe and V.~Naibo.
\newblock {Kato-Ponce inequalities on weighted and variable Lebesgue spaces}.
\newblock {\em Differential and Integral Equations}, 29(9-10):801--836, 2016.

\bibitem{DCU-dw-P2014}
D.~Cruz-Uribe and L.-A. Wang.
\newblock Variable {H}ardy spaces.
\newblock {\em Indiana Univ. Math. J.}, 63(2):447--493, 2014.

\bibitem{CruzUribeSFO:2017km}
D.~Cruz-Uribe and L.-A. Wang.
\newblock {Extrapolation and weighted norm inequalities in the variable
  Lebesgue spaces}.
\newblock {\em Trans. Amer. Math. Soc.}, 369(2):1205--1235, 2017.

\bibitem{MR3302105}
W.~Dami\'an, A.~Lerner, and C.~P\'erez.
\newblock Sharp weighted bounds for multilinear maximal functions and
  {C}alder\'on-{Z}ygmund operators.
\newblock {\em J. Fourier Anal. Appl.}, 21(1):161--181, 2015.

\bibitem{Diening2005}
L.~Diening.
\newblock Maximal function on {M}usielak-{O}rlicz spaces and generalized
  {L}ebesgue spaces.
\newblock {\em Bull. Sci. Math.}, 129(8):657--700, 2005.

\bibitem{diening-harjulehto-hasto-ruzicka2010}
L.~Diening, P.~Harjulehto, P.~H\"ast\"o, and M.~R{\r{u}}{\v{z}}i{\v{c}}ka.
\newblock {\em Lebesgue and {S}obolev spaces with {V}ariable {E}xponents},
  volume 2017 of {\em Lecture Notes in Mathematics}.
\newblock Springer, Heidelberg, 2011.

\bibitem{diening-hastoPreprint2010}
L.~Diening and P.~H\"ast\"o.
\newblock Muckenhoupt weights in variable exponent spaces.
\newblock {\em Preprint}, 2010.

\bibitem{garcia-cuerva-rubiodefrancia85}
J.~Garc{\'{\i}}a-Cuerva and J.~L. Rubio~de Francia.
\newblock {\em Weighted norm inequalities and related topics}, volume 116 of
  {\em North-Holland Mathematics Studies}.
\newblock North-Holland Publishing Co., Amsterdam, 1985.

\bibitem{grafakos08a}
L.~Grafakos.
\newblock {\em Classical Fourier Analysis}, volume 249 of {\em Graduate Texts
  in Mathematics}.
\newblock Springer, New York, 2nd edition, 2008.

\bibitem{MR706075}
J.-L. Journ\'{e}.
\newblock {\em Calder\'{o}n-{Z}ygmund operators, pseudo-differential operators
  and the {C}auchy integral of {C}alder\'{o}n}, volume 994 of {\em Lecture
  Notes in Mathematics}.
\newblock Springer-Verlag, Berlin, 1983.

\bibitem{Kokilashvili:2015gw}
V.~Kokilashvili, M.~Masty{\l}o, and A.~Meskhi.
\newblock {The multisublinear maximal type operators in {B}anach function
  lattices}.
\newblock {\em J. Math. Anal. Appl.}, 421(1):656--668, 2015.

\bibitem{MR2483720}
A.~Lerner, S.~Ombrosi, C.~P{\'e}rez, R.~H. Torres, and
  R.~Trujillo-Gonz{\'a}lez.
\newblock New maximal functions and multiple weights for the multilinear
  {C}alder\'on-{Z}ygmund theory.
\newblock {\em Adv. Math.}, 220(4):1222--1264, 2009.

\bibitem{muckenhoupt72}
B.~Muckenhoupt.
\newblock Weighted norm inequalities for the {H}ardy maximal function.
\newblock {\em Trans. Amer. Math. Soc.}, 165:207--226, 1972.

\end{thebibliography}

\end{document}